\title{Non-intrusive implementation of a wide variety of Multiscale Finite Element Methods}
\author[1,2]{R. A. Biezemans}
\author[1,2]{C. Le Bris}
\author[1,2]{F. Legoll}
\author[2,3]{A. Lozinski}
\affil[1]{\small École Nationale des Ponts et Chaussées, 6 et 8 avenue Blaise Pascal, 77455 Marne-La-Vallée Cedex 2, France}
\affil[2]{\small MATHERIALS project-team, Inria Paris, 2 rue Simone Iff, CS 42112, 75589 Paris Cedex 12, France}
\affil[3]{\small Université de Franche-Comté, CNRS, LmB, F-25000 Besançon, France}
\date{\today}
\begin{document}

\maketitle

\begin{abstract}
   Multiscale Finite Element Methods (MsFEMs) are now well-established finite element type approaches dedicated to multiscale problems. They first compute local, oscillatory, problem-dependent basis functions that generate a suitable discretization space, and next perform a Galerkin approximation of the problem on that space. We investigate here how these approaches can be implemented in a non-intrusive way, in order to facilitate their dissemination within industrial codes or non-academic environments. 
 	We develop an abstract framework that covers a wide variety of MsFEMs for linear second-order partial differential equations. Non-intrusive MsFEM approaches are developed within the full generality of this framework, which may moreover be beneficial to steering software development and improving the theoretical understanding and analysis of MsFEMs.
\end{abstract}

\section{Introduction}
\label{sec:introduction}
In this article, we consider highly oscillatory partial differential equations (PDEs) of the form
$
	\mathcal{L}^\varepsilon u^\varepsilon = f,
$
posed in a bounded domain $\Omega \subset \bbR^d$, where $\mathcal{L}^\varepsilon$ is a second-order linear differential operator with (possibly rough) coefficients that oscillate on a microscopic length scale of size $\varepsilon$ much smaller than the diameter of $\Omega$. (See Sec.~\ref{sec:msfem-diffusion} and~\ref{sec:gen-framework} for a complete description of the problems that we study.) We seek a numerical approximation of~$u^\varepsilon$ by applying a Galerkin approach. 

It is well-known that standard (say $\Pone$) finite element methods (FEMs) yield a poor approximation as a consequence of the highly oscillatory nature of the problem, unless a prohibitively expensive fine mesh is employed. An explicit example of this phenomenon is given, e.g., in~\cite[Example 1.1.]{altmann_numerical_2021}. Dedicated multiscale approaches have thus been introduced, which provide a reasonably accurate approximation of~$u^\varepsilon$ at a limited computational cost. Among the many multiscale approaches that have been proposed in the literature, we mention the {\em Heterogeneous Multiscale Method} (abbreviated HMM)~\cite{e_heterogeneous_2003}, the {\em Local Orthogonal Decomposition} (LOD)~\cite{malqvist_localization_2014} method, and the {\em Multiscale Finite Element Method} (MsFEM)~\cite{hou_multiscale_1997}, on which we focus here. We refer the reader to~\cite{abdulle_heterogeneous_2012,efendiev_multiscale_2009,le_bris_examples_2017,altmann_numerical_2021} for more comprehensive expositions of these multiscale methods.

The MsFEM is a finite element type approach that relies on the adaptation of the finite element space to the highly oscillatory differential operator, an idea that was first introduced in~\cite{babuska_generalized_1983}. The MsFEM was introduced in~\cite{hou_multiscale_1997}. It consists of two steps:
\begin{enumerate}
\item An ``offline'' stage, where highly oscillatory, problem-dependent basis functions are computed numerically as solutions to local problems (that mimic the reference problem on a subdomain). The local problems serve as a preprocessing step of the microstructure.
\item An ``online'' stage, where a Galerkin approximation of the reference problem, performed in the finite-dimensional space generated by the basis functions that are computed in the offline stage, is solved. This constitutes the global coupling between the local computations performed in the offline stage.
\end{enumerate}

The MsFEM approach is particularly interesting for multi-query contexts, where the PDE of interest is to be solved repeatedly for multiple right-hand sides $f$ (think e.g.~of optimization problems or time-dependent problems where the time discretization results in a PDE in space to advance from one time step to the next). In this case, the basis functions, which depend on~$\mathcal{L}^\varepsilon$, remain unchanged, so the offline stage is performed only once. The online stage solves a problem on a space of much lower dimension than a high fidelity space that fully resolves the microscale, resulting in a significant computational gain.

The specific choice of problem-dependent basis functions has led to various MsFEM variants in the literature (see, for instance, \cite{hou_multiscale_1997, efendiev_multiscale_2009, le_bris_msfem_2013, le_bris_numerical_2017}). Although their implementation is rightfully considered relatively easy~\cite{efendiev_multiscale_2009,nguyen_residual-driven_2019}, these methods dictate an intrusive workflow, i.e., substantial modifications have to be made to the code of some existing finite element software to implement the MsFEM. For instance, all automatic computations involving integrals of standard (e.g. polynomial) basis functions have to be replaced by the corresponding computations for specialized basis functions for each specific problem. We explore this further in Sec.~\ref{sec:msfem-diffusion-intrusive}.

The intrusive character of the MsFEM hinders the use of the method in many industrial contexts, because the time and tools required to adapt a legacy code that is currently in use may not be available. In this work, we propose a `non-intrusive' MsFEM strategy that uses an existing legacy code for single-scale problems (based on standard finite elements), without any modifications, to obtain an accurate resolution of oscillatory PDEs. 

The intrusiveness of the MsFEM is due to the following fact: the microstructure is preprocessed through the computation of specialized basis functions, thereby coupling the microstructure explicitly to the global numerical model. We circumvent this coupling with a novel formulation of the MsFEM basis functions. The resulting MsFEM strategy can be summarized as follows:
\begin{enumerate}
	\item In the offline stage, so-called numerical correctors are computed as the solution to problems mimicking the reference problem on a subdomain. The numerical correctors are then used to average the microstructure in the form of effective, piecewise constant coefficients, leading to an effective PDE.
	\item In the online stage, the effective PDE is solved by a standard FEM \emph{with the legacy code}.
	\item A post-processing stage is introduced to restore microscopic features in the macroscopic FEM result (obtained in the online stage) with the help of the numerical correctors computed in the offline stage.
\end{enumerate}

Our non-intrusive MsFEM approach was introduced on a prototypical example in~\cite{biezemans_non-intrusive_2023}. In this article, we extend the findings of~\cite{biezemans_non-intrusive_2023} to a large class of MsFEM variants that can be `related' to a $\Pone$ FEM, in a sense that will be made precise in Rem.~\ref{rem:p1}. This  requires the formulation of a general MsFEM framework, covering a generic definition of the MsFEM for the approximation of an abstract variational formulation of second-order linear PDEs. To the best of our knowledge, the question of how to make MsFEM approaches less intrusive has not been studied in the literature, except for the preliminary study in~\cite{biezemans_non-intrusive_2023}, and this work is a first step in that direction. We comment on the intrusiveness of other multiscale methods in Sec.~\ref{sec:noni-gen-other}. 

An overview of the contents of this article is as follows. In Sec.~\ref{sec:msfem-diffusion}, we recall the basic principles of the FEM and the MsFEM and we explain the intrusive character of the MsFEM. We also review the non-intrusive MsFEM approach that was proposed in~\cite{biezemans_non-intrusive_2023} for the simplest MsFEM variant on the example of a diffusion problem. We also highlight a link between the non-intrusive MsFEM approach and classical homogenization here. Then we summarize in Sec.~\ref{sec:gen-motivation} which properties of the MsFEM are essential for the non-intrusive workflow, and motivate the development of a general framework covering a wide variety of MsFEMs that we present in Sec.~\ref{sec:gen-framework}. We extend the non-intrusive MsFEM approach of~\cite{biezemans_non-intrusive_2023} to our general MsFEM framework in Sec.~\ref{sec:noni-gen}. The non-intrusive MsFEM approach of~\cite{biezemans_non-intrusive_2023} was found to be equivalent to a Petrov-Galerkin MsFEM (with $\Pone$ test functions). This is no longer true for all MsFEMs covered by our general framework, and we obtain two non-intrusive MsFEMs: the Petrov-Galerkin MsFEM, which is completely equivalent to its non-intrusive implementation, and an approximate version of the Galerkin MsFEM that can be implemented in a non-intrusive way. The three essential formulas for the formulation of the non-intrusive MsFEM are highlighted in special boxes, both for the diffusion problem in Sec.~\ref{sec:msfem-diffusion} and for the general framework in Sec.~\ref{sec:gen-framework} and~\ref{sec:noni-gen}. We then study the general MsFEM framework applied to diffusion problems in Sec.~\ref{sec:compare-gal-pg}, where we obtain a number of convergence results for the difference between the intrusive and non-intrusive MsFEM approaches. We conclude the article in Sec.~\ref{sec:gen-num} by a numerical comparison of the intrusive and non-intrusive MsFEM approaches for diffusion problems, in order to assess the efficiency of our approaches for cases that are not covered by the convergence results of Sec.~\ref{sec:compare-gal-pg}. Our results show that the Petrov-Galerkin MsFEM as well as the non-intrusive approximation of the Galerkin MsFEM are close to the original Galerkin MsFEM. Any possible additional error introduced by making the MsFEM non-intrusive is thus negligible.

\subsection*{Notation}
In this article, we shall adopt standard notation for Sobolev spaces. 
In particular, the dual space of~$H^1_0(\Omega)$ is denoted~$H^{-1}(\Omega)$. 
Further, for a given simplicial mesh~$\mesh$ of~$\Omega$, we use the notation~$H^1(\mesh)$ to denote the broken Sobolev space
\begin{equation*}
	H^1(\mesh)
	=
	\left\{
		u \in L^2(\Omega) \, \left\vert \,
		u\vert_K \in H^1(K) \text{ for all mesh elements } K \in \mesh \right.
	\right\}.
\end{equation*}
The standard norm for the space~$H^1(\Omega)$ is
$ \displaystyle
	\lVert u \rVert_{H^1(\Omega)}
	=
	\sqrt{
		\lVert u \rVert_{L^2(\Omega)}^2 + \lVert \nabla u \rVert_{L^2(\Omega)}^2
	}
$
and the corresponding broken norm is
$ \displaystyle
	\lVert u \rVert_{H^1(\mesh)}
	=
	\sqrt{
		\sum_{K\in\mesh} \lVert u \rVert_{H^1(K)}^2
	}.
$
The space of functions whose restriction to each element of~$\mesh$ is a polynomial of degree~$k$ is denoted~$\Pk(\mesh)$.

\section{The (intrusive) multiscale finite element method}
	\label{sec:msfem-diffusion}


\subsection{Discrete variational formulation}

Let $d \geq 1$ denote the space dimension of interest and let $\Omega \subset \bbR^d$ be a bounded polytope (e.g.\ a polygon in dimension $d=2$, a polyhedron in dimension $d=3$). 
Convexity of~$\Omega$ can be assumed for elliptic regularity results to hold, for which we refer to~\cite{grisvard_elliptic_1985}. This technical assumption is not necessary for the algorithmic aspects of the MsFEM that are the main focus of this article.

By way of example, we consider first the diffusion equation with homogeneous Dirichlet boundary conditions. In a second step, from Sec.~\ref{sec:gen-framework} onwards, we will also consider more general problems, and we will mention other types of boundary conditions in Sec.~\ref{sec:noni-gen-more}.
More precisely, we focus here on the boundary value problem
\begin{equation}
\left\{
\begin{IEEEeqnarraybox}[][c]{uts?s}
    \IEEEstrut
    $-{\operatorname{div}(A^\varepsilon \nabla u^\varepsilon)}$
    &$=$
    &$f$ 
    &in $\Omega$,
    \\
    $u^\varepsilon$
    &$=$
    &$0$
    &on $\partial \Omega$,
    \IEEEstrut
    \IEEEeqnarraynumspace 
\end{IEEEeqnarraybox}
\right. 
\label{eq:diffusion-pde}
\end{equation}
where the diffusion tensor $A^\varepsilon \in L^\infty(\Omega,\ \bbR^{d \times d})$ satisfies the uniform bounds
\begin{gather}
\begin{aligned}
	&\forall \, \xi\in\bbR^d,
    \quad
    m |\xi|^2 \leq \xi \cdot A^\varepsilon(x) \, \xi 
    \quad \text{a.e.~in } \Omega, \\
    \text{and}
    \qquad
    &\forall \, \xi,\eta \in \bbR^d,
    \quad 
    | \eta \cdot A^\varepsilon(x) \xi | \leq M \, |\xi| \, |\eta|
    \quad \text{a.e.~in } \Omega,
\end{aligned}
\label{ass:bounds}
\end{gather}
for some $M \geq m > 0$ independent of~$\varepsilon$. The right-hand side~$f$ does not vary on the microscopic scale~$\varepsilon$. We denote the diffusion tensor with a superscript $\varepsilon$ to keep in mind that $A^\varepsilon$ might be highly oscillatory on a typical length scale of size $\varepsilon$ much smaller than the diameter of $\Omega$ (assumed to be of order~1). 
No further structural assumptions on~$A^\varepsilon$ are made. In particular,~$A^\varepsilon$ need not be the rescaling of a fixed periodic matrix of the form~$A^\varepsilon(x) = A(x/\varepsilon)$. We will specialize to this periodic setting in Sec.~\ref{sec:homogenization-convergence} only to obtain convergence results, but this assumption is of no relevance for the practical implementation of the MsFEM. Let us also mention that none of the considerations in this article require symmetry of the diffusion tensor. Our development of non-intrusive MsFEMs also generalizes to linear \emph{systems} of PDEs. The analysis we provide is also expected to extend to e.g.~the system of linear elasticity up to some technicalities that we do not consider here.

For simplicity of exposition, we assume that $f\in L^2(\Omega)$ (rather than $f \in H^{-1}(\Omega)$, for which the problem~\eqref{eq:diffusion-pde} is in fact well-posed). We do so to avoid unnecessary technicalities. Our proposed non-intrusive MsFEM carries over to the more general case. For some convergence results, the condition $f \in L^2(\Omega)$ cannot be relaxed. In this case, this is also explicitly stated. 

Problem~\eqref{eq:diffusion-pde} admits a unique solution in the space $H^1_0(\Omega)$. This solution is also characterized by the variational formulation 
\begin{equation}
\left\{
\begin{IEEEeqnarraybox}[][c]{t?s}
    \IEEEstrut
    Find $u^\varepsilon \in H^1_0(\Omega)$ such that
    \\
    $a^{\varepsilon,\dif}(u^\varepsilon,\, v) = F(v)$
    &for all $v \in H^1_0(\Omega)$,
    \IEEEstrut
    \IEEEeqnarraynumspace 
\end{IEEEeqnarraybox}
\right.
\label{eq:diffusion-vf}
\end{equation}
where the bilinear form $a^{\varepsilon,\dif}$ and the linear form $F$ are defined, for any $u,\, v \in H^1_0(\Omega)$, by
\begin{equation}
    a^{\varepsilon, \dif}(u,\, v) = 
    \int_\Omega \nabla v \cdot A^\varepsilon \nabla u,
    \quad 
    F(v) = \int_\Omega f v.
\label{eq:diffusion-bilin-form}
\end{equation}
The coercivity hypothesis in~\eqref{ass:bounds} ensures that the bilinear form $a^{\varepsilon,\dif}$ is coercive on the space~$H^1_0(\Omega)$. Then the Lax-Milgram Theorem \cite[Theorem 5.8]{gilbarg_elliptic_2001} shows that~\eqref{eq:diffusion-vf} is indeed well-posed.

The numerical approximation of~\eqref{eq:diffusion-vf} with a finite element method starts by the introduction of a mesh~$\mesh$ for~$\Omega$. The subscript~$H$ denotes the typical size of the mesh elements. 
We assume~$\mesh$ to be a simplicial, conformal mesh. For some convergence results, we shall assume quasi-uniformity. These assumptions are standard in finite element analysis. We refer, e.g., to~\cite{quarteroni_numerical_2017, ciarlet_finite_1978, ern_theory_2004} for a general exposition and various examples. Again, these regularity properties of the mesh do not have any impact on the implementation of the MsFEM on a given mesh. The regularity plays a role only to obtain convergence results. 

A finite element method for~\eqref{eq:diffusion-pde} is obtained by restricting the equivalent formulation~\eqref{eq:diffusion-vf} to a finite-dimensional subspace of~$H^1_0(\Omega)$, typically consisting of functions that are piecewise polynomial on the mesh~$\mesh$. We suppose that we are in the regime where~$H$ is larger than or comparable to the microscale~$\varepsilon$. In this case, it is well known that a Galerkin approximation of~\eqref{eq:diffusion-vf} on, say, the standard (conforming) Lagrange $\Pone$ space on~$\mesh$ provides only a poor, not to say an incorrect approximation of $u^\varepsilon$. See~\cite[Example 1.1]{altmann_numerical_2021}, for instance, for an explicit example where the $\Pone$ approximation on a coarse mesh fails. At the same time, the use of a finite element method on a fine mesh of size $H \ll \varepsilon$ might be unfeasible from a computational point of view because of its prohibitive computational cost. To remedy this issue, we shall next introduce the \emph{multiscale finite element method} (MsFEM) \cite{hou_multiscale_1997, efendiev_multiscale_2009}.

\subsection{A simple multiscale finite element method}
\label{sec:msfem}
The MsFEM is a Galerkin approximation of~\eqref{eq:diffusion-vf} for which the approximation space is adapted in order to achieve satisfactory accuracy even on a coarse mesh. The correct choice of approximation space yields a numerical approximation that is much closer to $u^\varepsilon$ than a standard $\Pone$-approximation when~$\varepsilon$ is smaller than~$H$, and especially when~$\varepsilon$ becomes asymptotically small. To begin with, we introduce here the simplest variant of the MsFEM, which originally appeared in~\cite{hou_multiscale_1997}, before moving on to other MsFEM variants in Sec.~\ref{sec:gen-framework}.

Let $x_1,\dots,\, x_{N_0}$ be an enumeration of the interior vertices of the mesh $\mesh$, i.e., the vertices that do not lie on $\partial \Omega$. 
We denote by $\phiPone{i}$ the unique piecewise $\Pone$ function such that $\phiPone{i}(x_j) = \delta_{i,j}$ for all $1 \leq j \leq N_0$. (These are the basis functions for the standard $\Pone$ Lagrange finite element.) We define the \emph{multiscale basis functions} $\phiEps{i}$ (for $1 \leq i \leq N_0$) by 
\begin{equation}
    \forall \ K \in \mesh, 
    \quad 
    \left\{
    \begin{IEEEeqnarraybox}[][c]{uts?s}
        \IEEEstrut
        $-{\operatorname{div}(A^\varepsilon \nabla \phiEps{i})}$
        &$=$
        &$0$ 
        &in $K$,
        \\
        $\phiEps{i}$
        &$=$
        &$\phiPone{i}$
        &on $\partial K$.
        \IEEEstrut
        \IEEEeqnarraynumspace 
    \end{IEEEeqnarraybox}
    \right.
\label{eq:MsFEM-basis}
\end{equation}
All these problems, on each mesh element $K$, are again well-posed by coercivity of $A^\varepsilon$ and the Lax-Milgram Theorem. The functions $\phiEps{i}$ so defined belong to the global space $H^1_0(\Omega)$ because the local boundary conditions on $\partial K$ imply continuity across all mesh elements $K$. It is also immediately seen that $\phiEps{i}$ is supported by exactly the same mesh elements as $\phiPone{i}$.

\begin{remark}
    \label{rem:msfem-diff-basis-trivial}
        On each mesh element~$K$, problem~\eqref{eq:MsFEM-basis} defines at most~\mbox{$d+1$} non-trivial basis functions. Let~$i_1,\dots,i_{d+1}$ be the indices of the vertices of~$K$. It is easily inferred from~\eqref{eq:MsFEM-basis} that
        $\displaystyle
            \left. \phiEps{i_{d+1}} \right\vert_K 
            = 
            1 - \sum_{j=1}^d \left. \phiEps{i_j}\right\vert_K .
        $
    Thus, one only has to compute~$d$ basis functions by the resolution of the PDE~\eqref{eq:MsFEM-basis} on~$K$.
\end{remark}

The multiscale approximation space is defined as $V_{H,0}^\varepsilon = \operatorname{span}\{ \phiEps{i} \mid 1 \leq i \leq N_0 \}$. This is a finite-dimensional space of the same dimension as the one used for a $\Pone$ Lagrange finite element approximation on the mesh~$\mesh$. The MsFEM consists in computing the approximation $u^\varepsilon_H \in V_{H,0}^\varepsilon$ defined by the problem
\begin{equation}
     \forall \, v_H^\varepsilon \in V_{H,0}^\varepsilon,
     \qquad 
     a^{\varepsilon,\dif}{ \left(u^\varepsilon_H,\, v_H^\varepsilon \right)} = F \left(v_H^\varepsilon \right).
    \label{eq:diffusion-MsFEM}
\end{equation}
Since $V_{H,0}^\varepsilon$ is a subspace of $H^1_0(\Omega)$, the bilinear form~$a^{\varepsilon,\dif}$ is coercive on $V_{H,0}^\varepsilon$ and the discrete problem~\eqref{eq:diffusion-MsFEM} is again well-posed by virtue of the Lax-Milgram Theorem.

The computation of the multiscale basis functions~$\phiEps{i}$ is called the \emph{offline stage} of the MsFEM and only has to be carried out once if~\eqref{eq:diffusion-pde} has to be solved multiple times for different right-hand sides. Also note that all problems~\eqref{eq:MsFEM-basis} are independent of each other, and can thus be solved in parallel. Once all basis functions are known, one can compute the stiffness matrix of the MsFEM (see Sec.~\ref{sec:msfem-diffusion-intrusive} for more details), which is also part of the offline stage. In practice, the~$\phiEps{i}$ are approximated numerically on a fine mesh of~$K\in\mesh$ of mesh size $h \leq \varepsilon$ that resolves the oscillations of~$A^\varepsilon$. We omit these details here because they have no importance for the non-intrusive strategy that we shall propose later in this article.

The resolution of the global problem~\eqref{eq:diffusion-MsFEM}, each time the right-hand side~$F$ changes, is called the \emph{online stage}. The computational cost for this problem is the same as for a standard $\Pone$ approximation on the same mesh.
A further discussion of the practical implementation of the MsFEM is provided in Sec.~\ref{sec:msfem-diffusion-intrusive}. This discussion partially reproduces some elements of~\cite{biezemans_non-intrusive_2023}. We include it here to clarify and motivate the developments in the sequel.

\subsection{Intrusive workflow}
\label{sec:msfem-diffusion-intrusive}
The practical resolution of the global problem~\eqref{eq:diffusion-MsFEM} consists in the construction and resolution of the following linear system:
\begin{equation}
	\mathds{A}^\varepsilon U^\varepsilon = \mathds{F}^\varepsilon,
\label{eq:linear-system-msfem}
\end{equation}
with the stiffness matrix~$\mathds{A}^\varepsilon$ and the right-hand side~$\mathds{F}^\varepsilon$ of the linear system given by
\begin{equation}
    \forall \ 1 \leq i,\,j \leq N_0,
    \quad
    \mathds{A}^\varepsilon_{j,i} = a^{\varepsilon,\dif} \left( \phiEps{i},\, \phiEps{j} \right),
    \quad 
    \mathds{F}^\varepsilon_j = F \left( \phiEps{j} \right),
\label{eq:linear-system-msbasis}
\end{equation}
where we recall that~$N_0$ denotes the number of interior vertices of~$\mesh$.
The MsFEM approximation~$u^\varepsilon_H$ is given by
$\displaystyle
	u^\varepsilon_H 
	=
	\sum_{i=1}^{N_0} U_i^\varepsilon \phiEps{i}.
$
The MsFEM can then be written (as it is traditionally presented) as in Algorithm~\ref{alg:msfem-diff}. We use the notation $\displaystyle a^{\varepsilon,\dif}_K(u,v) = \int_K \nabla v \cdot A^\varepsilon \nabla u$ for all $u,v \in H^1(K)$ and we write $\displaystyle F_K(v) = \int_K fv$ for any $v \in L^2(K)$.

\begin{algorithm}[ht]
\caption{MsFEM approach for problem~\eqref{eq:diffusion-pde} (see comments in the text)}
\label{alg:msfem-diff}
\begin{algorithmic}[1]
        \State Construct a mesh $\mesh$ of $\Omega$, denote $N_0$ the number of internal vertices and $\mathcal{N}(n,K)$ the global index of the vertex of $K \in \mesh$ that has local index $1 \leq n \leq d+1$ in $K$
       \label{alg:msfem-diff-offl1}

    \medskip

	\State Set $\mathds{A}^\varepsilon \coloneqq 0$ and $\mathds{F}^\varepsilon \coloneqq 0$    
    
    \ForAll{$K \in \mesh$}
        \For{$1 \leq n \leq d+1$}
			\State Set $i \coloneqq \mathcal{N}(n,K)$
            \State Solve for $\left. \phiEps{i} \right\vert_K$ in~\eqref{eq:MsFEM-basis}
            \label{alg:msfem-diff-basis}
        \EndFor

	    \For{$1 \leq l \leq d+1$}
			\State Set $j \coloneqq \mathcal{N}(l,K)$
	    	\For{$1 \leq n \leq d+1$}
				\State Set $i \coloneqq \mathcal{N}(n,K)$ and $\mathds{A}^\varepsilon_{j,i} \pluseq a_K^{\varepsilon,\dif}(\phiEps{i},\, \phiEps{j})$
	            \label{alg:msfem-diff-stiffness}
			\EndFor
			\label{alg:msfem-diff-offl2}
			\State Set $\mathds{F}^\varepsilon_{j} \pluseq F_K(\phiEps{j})$
	        \label{alg:msfem-diff-onl1}
	    \EndFor
   	\EndFor

    \medskip 
    
    \State Solve the linear system $\mathds{A}^\varepsilon U^\varepsilon = \mathds{F}^\varepsilon$

    \medskip 
    
    \State Obtain the MsFEM approximation $u^\varepsilon_H = \sum\limits_{i=1}^{N_0} U^\varepsilon_i \phiEps{i}$
    \label{alg:msfem-diff-onl2}
\end{algorithmic}
\end{algorithm}

Lines~\ref{alg:msfem-diff-offl1}-\ref{alg:msfem-diff-offl2} of Algorithm~\ref{alg:msfem-diff} (resp.~\ref{alg:msfem-diff-onl1}-\ref{alg:msfem-diff-onl2}) constitute the offline (resp.~online) stage of the MsFEM. Note that the computation of the stiffness matrix~$\mathds{A}^\varepsilon$ in line~\ref{alg:msfem-diff-stiffness} only depends on the multiscale basis functions (and not on the right-hand side~$f$) and can therefore be carried out once and for all in the offline stage. Also note that, for an efficient computation of the~$\phiEps{i}$ in line~\ref{alg:msfem-diff-basis}, one should apply Rem.~\ref{rem:msfem-diff-basis-trivial}. Only the online stage is to be repeated when problem~\eqref{eq:diffusion-pde} is to be solved multiple times for various right-hand sides~$f$.

Implementing Algorithm~\ref{alg:msfem-diff} in an industrial code is challenging. Indeed, the practical implementation of any finite element method relies on (i) the construction of a mesh, (ii) the construction of the linear system associated to the discrete variational formulation and (iii) the resolution of the linear system. An efficient implementation of the second step heavily relies on the choice of the discretization space.

Regarding the construction of the linear system (performed in line~\ref{alg:msfem-diff-stiffness} of Algorithm~\ref{alg:msfem-diff}), it is by no means obvious to adapt existing finite element codes based on generic approximation spaces (for instance spaces of piecewise polynomial functions, such as the piecewise affine functions that we will introduce in Def.~\ref{def:underlying-space} below) to a different, problem-dependent choice of space such as $V_H^\varepsilon$. No analytic expressions for the basis functions~$\phiEps{i}$ are available (and thus a fine mesh should be used to approximate them), the computation of~$a_K^\varepsilon(\phiEps{i},\phiEps{j})$ should be performed by quadrature rules on the fine mesh because the integrands are highly oscillatory, one should have at hand the correspondence between element and vertex indices of the coarse mesh ($\mathcal{N}(n,K)$ in Algorithm~\ref{alg:msfem-diff}), the assembly of the global stiffness matrix $\{ \mathds{A}^\varepsilon_{j,i} \}_{1 \leq i,j \leq N_0}$ should be executed by a dedicated new piece of software, etc. To alleviate these obstacles, we introduce below a way of implementing the MsFEM that capitalizes on an existing code for solving~\eqref{eq:diffusion-pde} by a $\Pone$ approximation on $\mesh$ in the case of slowly varying diffusion coefficients. The three central identities for our approach that we aim to generalize to other MsFEMs in this article are framed in distinctive boxes.

\subsection{Effective problem on the macroscopic scale}
\label{sec:msfem-effective}
Let us consider the construction of the stiffness matrix of the MsFEM in more detail. The stiffness matrix defined in~\eqref{eq:linear-system-msbasis} requires the computation of the quantities
\begin{equation}
	\mathds{A}^\varepsilon_{j,i}
	=
	a^{\varepsilon,\dif}(\phiEps{i},\phiEps{j})
	=
	\sum_{K\in\mesh} \int_K \nabla \phiEps{j} \cdot A^\varepsilon \nabla \phiEps{i},
	\label{eq:stifness-msfem-msbasis}
\end{equation}
for all $1 \leq i,j \leq N_0$. 

Following~\cite{biezemans_non-intrusive_2023}, we rewrite the multiscale basis functions as
\begin{equation}
    \tcboxmath{
        \forall \, K \in \mesh, 
        \qquad
        \phiEps{i}
        =
        \phiPone{i} + \sum_{\alpha=1}^d 
            \left.\left(\partial_\alpha \phiPone{i}\right) \right\vert_K \VK{\alpha}
        \quad 
        \text{in } K,
    }
\label{eq:diffusion-MsFEM-Vxy-grad}
\end{equation}
for all~$1 \leq i \leq N_0$, where, for each mesh element~$K$, we define the numerical corrector $\VK{\alpha} \in H^1_0(\Omega)$ ($1\leq\alpha\leq d$) as the function supported by~$K$ that is the unique solution to the local problem
\begin{equation}
    \tcboxmath{
        \left\{
        \begin{IEEEeqnarraybox}[][c]{uts?s}
            \IEEEstrut
            $-{\operatorname{div}(A^\varepsilon \nabla \VK{\alpha})}$
            &$=$
            &$\operatorname{div}(A^\varepsilon e_ \alpha)$ 
            &in $K$,
            \\
            $\VK{\alpha}$
            &$=$
            &$0$
            &on $\partial K$.
            \IEEEstrut
            \IEEEeqnarraynumspace 
        \end{IEEEeqnarraybox}
        \right.
    }
    \label{eq:diffusion-MsFEM-correctors}
\end{equation}
Here, $e_\alpha$ denotes the $\alpha$-th canonical unit vector of~$\bbR^d$.
The expansion~\eqref{eq:diffusion-MsFEM-Vxy-grad} is obtained upon rewriting~\eqref{eq:MsFEM-basis} as a PDE for $\phiEps{i}-\phiPone{i}$, and then using linearity of the PDE and the fact that~$\nabla\phiPone{i}$ is constant in~$K$ to show that 
$ \displaystyle
    \sum_{\alpha=1}^d \left.\left(\partial_\alpha \phiPone{i}\right) \right\vert_K \VK{\alpha}
$ is indeed the unique solution to this PDE.

Inserting~\eqref{eq:diffusion-MsFEM-Vxy-grad} for the trial and test functions in~\eqref{eq:stifness-msfem-msbasis} and again exploiting the fact that all~$\phiPone{i}$ have piecewise constant gradients, we obtain
\begin{align*}
    \mathds{A}^\varepsilon_{j,i} 
    &= 
    \sum_{K \in \mesh}
        \sum_{\alpha,\beta=1}^d 
        \left.\left(\partial_\beta \phiPone{j}\right)\right\vert_K \left(
            \int_K \left( e_\beta + \nabla \VK{\beta} \right) \cdot A^\varepsilon \left( e_\alpha + \nabla \VK{\alpha} \right)
        \right)
        \left.\left(\partial_\alpha \phiPone{i}\right)\right\vert_K
    \\
	&=
	\sum_{K \in \mesh}
        \sum_{\alpha,\beta=1}^d 
        \left.\left(\partial_\beta \phiPone{j}\right)\right\vert_K
            \, a_K^{\varepsilon,\dif} \left(x^\alpha + \VK{\alpha}, x^\beta + \VK{\beta}\right) \,
        \left.\left(\partial_\alpha \phiPone{i}\right)\right\vert_K.
\end{align*}
Next we define the piecewise constant effective diffusion tensor $\overline{A} \in \mathbb{P}_0(\mesh,\,\bbR^{d\times d})$ by
\begin{equation}
    \left. \overline{A}_{\beta,\alpha} \right\vert_K 
    =
    \frac{1}{|K|} a^{\varepsilon,\dif}_K{ \left(x^\alpha + \VK{\alpha},\, x^\beta + \VK{\beta} \right)}
    \quad \text{for each } K \in \mesh \text{ and each } 1 \leq \alpha,\beta \leq d,
	\label{eq:diffusion-eff-msfem-gal}
\end{equation}
where $|K|$ denotes the measure of the mesh element $K$.
Then~\eqref{eq:stifness-msfem-msbasis} can be written as
\begin{equation}
    \tcboxmath{ 
        \mathds{A}^\varepsilon_{j,i} = \int_\Omega \nabla \phiPone{j} \cdot \overline{A} \, \nabla \phiPone{i}.
    }
    \label{eq:stiffness-msfem-p1basis}
\end{equation}

Motivated by~\eqref{eq:stiffness-msfem-p1basis}, we introduce the coarse-scale problem
\begin{equation}
\left\{
\begin{IEEEeqnarraybox}[][c]{uts?s}
    \IEEEstrut
    $-{\operatorname{div}\left(\,\overline{A}\, \nabla u\right)}$
    &$=$
    &$f$ 
    &in $\Omega$,
    \\
    $u$
    &$=$
    &$0$
    &on $\partial \Omega$,
    \IEEEstrut
    \IEEEeqnarraynumspace 
\end{IEEEeqnarraybox}
\right. 
\label{eq:diffusion-pde-effective}
\end{equation}
and its Galerkin discretization with $\Pone$ Lagrange elements: with $V_{H,0} = \operatorname{span} \left\{\phiPone{i} \mid 1 \leq i \leq N_0 \right\}$ (note that the definition of~$V_{H,0}$ will be generalized in Def.~\ref{def:DOF}), find $u_{H,0} \in V_{H,0}$ such that
\begin{equation}
  \forall \, v_H \in V_{H,0}, \quad \overline{a}^{\dif}(u_H,v_H) = F(v_H),
  \label{eq:diffusion-FEM-effective}
\end{equation}
where the linear form~$F$ is defined in~\eqref{eq:diffusion-vf} and the bilinear form~$\overline{a}^{\dif}$ is defined as
\begin{equation}
  \forall \, u, v \in H^1_0(\Omega), \quad \overline{a}^{\dif}(u,v) = \int_\Omega \nabla v \cdot \overline{A} \, \nabla u. 
  \label{eq:diffusion-form-effective}
\end{equation}
Problem~\eqref{eq:diffusion-FEM-effective} equivalently writes
\begin{equation}
  \mathds{A}^{\Pone} \, U^{\Pone} = \mathds{F}^{\Pone},
  \label{eq:linear-system-p1}
\end{equation}
with
\begin{equation}
  \forall \, 1 \leq i, j \leq N_0,
  \quad
  \mathds{A}^{\Pone}_{j,i} = \overline{a}^{\dif} \left(\phiPone{i},\phiPone{j} \right),
  \quad 
  \mathds{F}^{\Pone}_j = F \left( \phiPone{j} \right).  
  \label{eq:linear-system-effective}
\end{equation}
Comparing the expressions~\eqref{eq:stiffness-msfem-p1basis} and~\eqref{eq:linear-system-effective}, we deduce that~$\mathds{A}^\varepsilon = \mathds{A}^\Pone$. In other words:
\begin{lemma}
	\label{lem:stiffness-MsFEM-P1}
	The stiffness matrix of the MsFEM problem~\eqref{eq:diffusion-MsFEM} is identical to the stiffness matrix of the~$\Pone$ problem~\eqref{eq:diffusion-FEM-effective}.
\end{lemma}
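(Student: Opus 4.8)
The plan is to establish the claimed identity $\mathds{A}^\varepsilon = \mathds{A}^\Pone$ entrywise, by verifying that both $\mathds{A}^\varepsilon_{j,i}$ and $\mathds{A}^\Pone_{j,i}$ coincide with the single integral $\int_\Omega \nabla \phiPone{j} \cdot \overline{A}\, \nabla \phiPone{i}$. Since most of the work has already been carried out in the derivation of~\eqref{eq:stiffness-msfem-p1basis}, the lemma itself reduces to a direct comparison of two expressions; nonetheless I would organize the argument by matching each side separately against this common integral, so that the equality becomes transparent.

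For the $\Pone$ matrix, I would simply unfold the definition~\eqref{eq:linear-system-effective}, namely $\mathds{A}^\Pone_{j,i} = \overline{a}^{\dif}(\phiPone{i}, \phiPone{j})$, and substitute the definition of the bilinear form $\overline{a}^{\dif}$; this immediately yields $\mathds{A}^\Pone_{j,i} = \int_\Omega \nabla \phiPone{j} \cdot \overline{A}\, \nabla \phiPone{i}$ with no computation, being purely a matter of definitions. For the MsFEM matrix, I would invoke the already-established identity~\eqref{eq:stiffness-msfem-p1basis}, which is literally the same integral. The two right-hand sides then agree, so $\mathds{A}^\varepsilon_{j,i} = \mathds{A}^\Pone_{j,i}$ for all $1 \le i,j \le N_0$, which is the assertion.

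The one genuine subtlety — and it lies in the derivation of~\eqref{eq:stiffness-msfem-p1basis} rather than in the lemma per se — is the index bookkeeping in the definition~\eqref{eq:diffusion-eff-msfem-gal} of $\overline{A}$. One must check that in $\overline{A}_{\beta,\alpha}\vert_K = |K|^{-1} a_K^{\varepsilon,\dif}(x^\alpha + \VK{\alpha},\, x^\beta + \VK{\beta})$ the trial slot carries the index $\alpha$ contracted against $\nabla\phiPone{i}$ and the test slot the index $\beta$ contracted against $\nabla\phiPone{j}$, so that the matrix contraction $\nabla\phiPone{j}\cdot\overline{A}\,\nabla\phiPone{i} = \sum_{\alpha,\beta}(\partial_\beta\phiPone{j})\,\overline{A}_{\beta,\alpha}\,(\partial_\alpha\phiPone{i})$ matches, term by term, the double sum produced by inserting the corrector expansion~\eqref{eq:diffusion-MsFEM-Vxy-grad} into~\eqref{eq:stifness-msfem-msbasis}. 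Getting this alignment right — together with noting that the piecewise constancy of $\nabla\phiPone{i}$ on each $K$ is precisely what lets the derivatives be pulled outside the element integrals, turning each $a_K^{\varepsilon,\dif}(x^\alpha+\VK{\alpha}, x^\beta+\VK{\beta})$ into $|K|\,\overline{A}_{\beta,\alpha}\vert_K$ — is the main place where care is needed. Once~\eqref{eq:stiffness-msfem-p1basis} is in hand, the lemma is immediate, and I would expect no further obstacle.
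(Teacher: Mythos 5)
Your proposal is correct and takes essentially the same route as the paper: the paper itself proves the lemma by inserting the corrector expansion~\eqref{eq:diffusion-MsFEM-Vxy-grad} into~\eqref{eq:stifness-msfem-msbasis}, using the piecewise constancy of $\nabla\phiPone{i}$ to pull the derivatives out of the element integrals and identify $\overline{A}$ via~\eqref{eq:diffusion-eff-msfem-gal}, arriving at~\eqref{eq:stiffness-msfem-p1basis}, and then concludes by direct comparison with the definition~\eqref{eq:linear-system-effective} of $\mathds{A}^{\Pone}$ --- exactly your entrywise matching against the common integral $\int_\Omega \nabla\phiPone{j}\cdot\overline{A}\,\nabla\phiPone{i}$. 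The index bookkeeping you flag (trial slot $\alpha$ against $\nabla\phiPone{i}$, test slot $\beta$ against $\nabla\phiPone{j}$) is handled identically in the paper's derivation, so there is no gap.
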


This lemma immediately implies that the $\Pone$ problem~\eqref{eq:diffusion-FEM-effective} is well-posed, since the MsFEM~\eqref{eq:diffusion-MsFEM} itself is well-posed.

Let us point out that problems~\eqref{eq:diffusion-pde-effective} and~\eqref{eq:diffusion-FEM-effective} are defined entirely in terms of quantities that vary only on the macroscopic scale~$H$. The finite element problem~\eqref{eq:diffusion-FEM-effective} can thus be solved using a legacy code that is designed for standard FEMs.
Lemma~\ref{lem:stiffness-MsFEM-P1} then suggests including the~$\Pone$ approximation~\eqref{eq:diffusion-FEM-effective} of the effective, coarse-scale problem~\eqref{eq:diffusion-pde-effective} as an integral part of the MsFEM approach. We do so in Algorithm~\ref{alg:msfem-diff-noni} below.

The right-hand side vector~$\mathds{F}^\varepsilon$ in~\eqref{eq:linear-system-msbasis} is, in general, different from~$\mathds{F}^\Pone$ in~\eqref{eq:linear-system-effective}. Indeed, we integrate the product of $f$ with highly oscillatory basis functions in the former problem and with~$\Pone$ basis functions in the latter. The solutions~$U^\varepsilon$ and~$U^\Pone$ to~\eqref{eq:linear-system-msfem} and~\eqref{eq:linear-system-p1}, respectively, are thus different a priori.

\subsection{Non-intrusive workflow}
\label{sec:msfem-diffusion-nonin}
We propose the following non-intrusive MsFEM variant:
\begin{equation}
	\text{Set } u_H^\varepsilon = u_H + \sum_{K\in\mesh} (\partial_\alpha u_H)\vert_K \, \VK{\alpha} \in V_{H,0}^\varepsilon \text{ where } u_H \in V_{H,0} \text{ is the unique solution to~\eqref{eq:diffusion-FEM-effective}}.
\label{eq:diffusion-MsFEM-noni}
\end{equation}
The MsFEM approximation~$u_H^\varepsilon$ is well-defined, since we have seen above that problem~\eqref{eq:diffusion-FEM-effective} is well-posed.

Note that the symbol~$u^\varepsilon_H$ shall be used here and in the sequel for the solution to various MsFEMs variants to alleviate the notation. The exact MsFEM will be specified by the context. We will use distinct notation for different MsFEM variants when required for clarity.

The most efficient way to compute~$u^\varepsilon_H$ from~$u_H$ is not as stated here, however. The \emph{evaluation} of~$u_H(x)$ may require the determination of the degrees of freedom associated to the simplex~$K$ to which~$x$ belongs. This demands the use of the internal mechanisms of the legacy code that is used to compute~$u_H$. The use of the legacy code can be avoided by expanding~$u_H$ as follows. For any affine function~$\varphi$ on~$K$, we have
\begin{equation}
    \varphi(x) = \varphi \left(x_{c,K}\right) + \sum_{\alpha=1}^d \partial_\alpha \varphi \cdot \left( x^\alpha - x^\alpha_{c,K} \right)
    \quad 
    \text{on } K,
    \label{eq:P1-expansion}
\end{equation}
where $x^\alpha$ denotes the function that to a point~$x\in\Omega$ associates its $\alpha$-th coordinate, and $x_{c,K}=(x^1_{c,K},\dots,x^d_{c,K})$ is the centroid of~$K$.
If one uses the legacy code to store the values of~$u_H(x_{c,K})$ and~$\partial_\alpha u_H$ element  by element at the end of the online stage, then~$u^\varepsilon_H$ defined in~\eqref{eq:diffusion-MsFEM-noni} can be computed element by element according to
\begin{equation}
	\forall \, K \in \mesh,
	\quad 
	u^\varepsilon_H(x)
	= 
	u_H(x_{c,K}) + \sum\limits_{\alpha=1}^d \left.\left(\partial_\alpha u_H\right)\right\vert_K \left( x^\alpha - x^\alpha_{c,K} + \VK{\alpha}(x) \right)
	\quad
	\text{on } K,
\label{eq:msfem-diff-noni-post}
\end{equation}
without using the legacy code.

The above observations culminate in the computational approach presented in Algorithm~\ref{alg:msfem-diff-noni}. We can distinguish
\begin{enumerate}[label=(\arabic*)]
    \item the offline stage consisting of lines~\ref{alg:msfem-diff-noni-offl1}-\ref{alg:msfem-diff-noni-offl2},
    \item the online stage being executed entirely in line~\ref{alg:msfem-diff-noni-onl},
    \item a post-processing step in line \ref{alg:msfem-diff-noni-post}.
\end{enumerate}

\begin{algorithm}[ht]
\caption{Non-intrusive MsFEM approach for problem~\eqref{eq:diffusion-pde}}
\label{alg:msfem-diff-noni}
\begin{algorithmic}[1]
        \State Let $\mesh$ be the mesh used by the legacy code
        \label{alg:msfem-diff-noni-offl1}

    \medskip
    
    \ForAll{$K \in \mesh$}
   	\label{alg:msfem-diff-noni-local}
        \For{$1 \leq \alpha \leq d$}
            \State Solve for $\VK{\alpha}$ defined by~\eqref{eq:diffusion-MsFEM-correctors}
            \label{alg:msfem-diff-noni-offl-corr}
        \EndFor
		\State Compute $\overline{A}\vert_K$ defined by~\eqref{eq:diffusion-eff-msfem-gal}
		\label{alg:msfem-diff-noni-offl-tensors}
	\EndFor
	\label{alg:msfem-diff-noni-offl2}
	
	\medskip
    
    \State Use the legacy code to solve for~$u_H$ defined by~\eqref{eq:diffusion-FEM-effective} and to save $\left\{u_H(x_{c,K})\right\}_{K\in\mesh}$ and $\left\{ (\partial_\alpha u_H) \vert_K \right\}_{K\in\mesh, \, 1 \leq \alpha \leq d}$
    \label{alg:msfem-diff-noni-onl}

    \medskip 
    
    \State Obtain the MsFEM approximation~$u^\varepsilon_H$ by~\eqref{eq:msfem-diff-noni-post}
    \label{alg:msfem-diff-noni-post}
\end{algorithmic}
\end{algorithm}

The superiority of Algorithm~\ref{alg:msfem-diff-noni} over the classical MsFEM Algorithm~\ref{alg:msfem-diff} is that the global problem of the online stage can completely be constructed and solved by the use of a pre-existing $\Pone$ PDE solver. The only requirements for the legacy code are the functionality to provide piecewise constant diffusion coefficients to the solver and the existence of a procedure to store the value of the $\Pone$ solution and its gradient at the centroids of the mesh.
An additional advantage in the online stage is that the construction of the right-hand side~$\mathds{F}^\Pone$ (see~\eqref{eq:linear-system-effective}) for the global problem only requires a numerical quadrature on the coarse mesh and is therefore cheaper than the construction of~$\mathds{F}^\varepsilon$ (see~\eqref{eq:linear-system-msbasis}), involving the multiscale basis functions and requiring numerical quadratures at the microscale. 

The part of the offline stage that manipulates fine meshes (lines~\ref{alg:msfem-diff-noni-local}-\ref{alg:msfem-diff-noni-offl2}) and the post-processing step can be developed independently of the legacy code used in line~\ref{alg:msfem-diff-noni-onl}.
The requirement for these fine-scale solvers is that they have access to the coarse mesh $\mesh$ used by the global solver.
Note also that the local problem~\eqref{eq:diffusion-MsFEM-correctors} is only indexed by the coarse mesh element~$K$, in contrast to the local problem~\eqref{eq:MsFEM-basis} that is indexed both by the coarse mesh element~$K$ and the vertex index~$i$. For the latter problems, one has to know, for each element~$K$, the global index that corresponds to the vertices of~$K$, a piece of information that may be difficult to access in a legacy code. For the problems~\eqref{eq:diffusion-MsFEM-correctors}, this correspondence is not needed to compute~$\overline{A}$, nor for the computation of the fine-scale solution~$u^\varepsilon_H$ in~\eqref{eq:msfem-diff-noni-post}, both of which are entirely defined element-wise.

\begin{remark}[Quantities of interest]
    \label{rem:postprocessing-compute}
    In the post-processing step of Algorithm~\ref{alg:msfem-diff-noni}, it is easy to compute pointwise values of the approximation~$u^\varepsilon_H$ by~\eqref{eq:msfem-diff-noni-post} and to use these for further computational steps, such as the evaluation of the energy or other quantities of interest. This task can be carried out element wise, hence Eq.~\eqref{eq:msfem-diff-noni-post} can easily be used. (See also Rem.~\ref{rem:error-computation}.)
\end{remark}

\begin{remark}[Visualization]
    \label{rem:postprocessing-vis}
    We focus our attention here on the visualization of the MsFEM approximation~$u^\varepsilon_H$, which can be an important tool in engineering practices. Visualization requires the combination of information on neighbouring mesh elements, and this can in general not be carried out by the legacy code since it does not have access to the fine meshes used to compute the numerical correctors. Even if this were the case, the fine meshes may not yield a globally conformal mesh when combined. The question of a global visualization then becomes a complex one that requires innovations beyond the contributions of this article. Instead, we propose the following two-step visualization approach:
    \begin{itemize}
        \item One can visualize the coarse part~$u_H$ of the MsFEM approximation for a global view of the solution with the tools provided by the legacy code that is used to compute~$u_H$;
        \item The fine scale details of~$u^{\varepsilon}_H$ in regions of interest can be studied through zooms inside mesh elements, using the code that is used for computations at the microscale.
    \end{itemize}
\end{remark}

\subsection{Interpretation of the non-intrusive MsFEM}
We emphasized above that the right-hand sides of the linear system for the MsFEM in~\eqref{eq:linear-system-msbasis} and the linear system solved for the non-intrusive MsFEM in~\eqref{eq:linear-system-effective} are different in general. This motivates the comparison of the non-intrusive MsFEM approach~\eqref{eq:diffusion-MsFEM-noni} to the following Petrov-Galerkin MsFEM:
\begin{equation}
	\text{Find } u^\varepsilon_H \in V_{H,0}^\varepsilon \text{ such that } 
	a^{\varepsilon,\dif}{ \left(u_H^\varepsilon, \phiPone{j} \right)} 
	= 
	F \left( \phiPone{j} \right) \quad \text{for all } 1 \leq j \leq N_{0},
	\label{eq:diffusion-MsFEM-testP1}	
\end{equation}
based on the trial space~$V_{H,0}^\varepsilon$ and the test space~$V_{H,0}$ for both the bilinear and the linear form. The following result was shown in~\cite{biezemans_non-intrusive_2023}. 

\begin{lemma}
\label{lem:IBP-bubbles}
    The non-intrusive MsFEM variant~\eqref{eq:diffusion-MsFEM-noni} coincides with the Petrov-Galerkin MsFEM~\eqref{eq:diffusion-MsFEM-testP1}.
\end{lemma}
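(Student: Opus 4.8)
The plan is to show that the two methods reduce to one and the same linear system, so that their solutions must coincide. First I would rewrite the non-intrusive approximation in the multiscale basis. Writing the solution of the effective problem~\eqref{eq:diffusion-FEM-effective} as $u_H = \sum_{i=1}^{N_0} U_i \phiPone{i}$, so that the coordinate vector $U$ solves $\mathds{A}^{\Pone} U = \mathds{F}^{\Pone}$, I would insert this expansion into~\eqref{eq:diffusion-MsFEM-noni} and use the linearity of the gradient together with the corrector identity~\eqref{eq:diffusion-MsFEM-Vxy-grad}. An element-by-element computation then gives $u_H^\varepsilon = \sum_{i=1}^{N_0} U_i \phiEps{i}$; that is, the non-intrusive MsFEM returns the element of $V_{H,0}^\varepsilon$ whose coordinates in the multiscale basis are exactly $U$.

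Next I would set up the Petrov-Galerkin system. Expanding the sought solution as $\sum_i \tilde U_i \phiEps{i}$ and inserting it into~\eqref{eq:diffusion-MsFEM-testP1}, linearity shows that this is equivalent to $\mathds{B}\, \tilde U = \mathds{F}^{\Pone}$, where $\mathds{B}_{j,i} = a^{\varepsilon,\dif}(\phiEps{i}, \phiPone{j})$ and the right-hand side is $F(\phiPone{j}) = \mathds{F}^{\Pone}_j$. It therefore suffices to prove that $\mathds{B} = \mathds{A}^\varepsilon$, after which Lemma~\ref{lem:stiffness-MsFEM-P1} gives $\mathds{B} = \mathds{A}^{\Pone}$ and the two systems become identical.

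The crux is the identity $\mathds{B} = \mathds{A}^\varepsilon$, which is the only nontrivial point. Using~\eqref{eq:diffusion-MsFEM-Vxy-grad} for the trial function $\phiEps{i}$ and the fact that $\nabla \phiPone{j}$ is constant on each $K$, the entries of $\mathds{B}$ carry, on each element, the integrand $e_\beta \cdot A^\varepsilon (e_\alpha + \nabla \VK{\alpha})$, whereas the Galerkin entries $\mathds{A}^\varepsilon_{j,i}$ carry the fully corrected integrand $(e_\beta + \nabla \VK{\beta}) \cdot A^\varepsilon (e_\alpha + \nabla \VK{\alpha})$ (as already recorded in the derivation of~\eqref{eq:diffusion-eff-msfem-gal}). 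Their difference is $\int_K \nabla \VK{\beta} \cdot A^\varepsilon (e_\alpha + \nabla \VK{\alpha})$, and the key observation is that this vanishes: the weak form of the corrector problem~\eqref{eq:diffusion-MsFEM-correctors} reads $\int_K \nabla w \cdot A^\varepsilon (e_\alpha + \nabla \VK{\alpha}) = 0$ for all $w \in H^1_0(K)$, and $\VK{\beta}$ — supported by $K$ with zero trace on $\partial K$ — is an admissible test function. Hence $\mathds{B}_{j,i} = \mathds{A}^\varepsilon_{j,i}$ for all $i,j$.

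To conclude, combining this with Lemma~\ref{lem:stiffness-MsFEM-P1} turns the Petrov-Galerkin system into $\mathds{A}^{\Pone} \tilde U = \mathds{F}^{\Pone}$, which is exactly the system satisfied by $U$ in the first step. Since $\mathds{A}^{\Pone}$ is invertible (the effective problem is well-posed, as noted after Lemma~\ref{lem:stiffness-MsFEM-P1}), we get $\tilde U = U$ and thus the two approximations coincide. I expect the main obstacle to be precisely the vanishing integral in the third paragraph: one has to recognize that the test-function corrector $\nabla \VK{\beta}$ can be absorbed because $\VK{\beta}$ is itself a solution of a Dirichlet corrector problem on $K$, hence an admissible test function in the weak formulation defining $\VK{\alpha}$. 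Everything else is linear-algebra bookkeeping and an appeal to the already-established stiffness-matrix identity.
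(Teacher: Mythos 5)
Your proof is correct and follows essentially the same route as the paper: the paper establishes this result (in the generalized form of Lemma~\ref{lem:IBP-bubbles-gen}) by showing the Galerkin and Petrov-Galerkin matrices coincide because $\phiEps{j}-\phiPone{j}$ belongs to the local test space and the trial multiscale function solves the homogeneous local problem, which is exactly your vanishing integral $\int_K \nabla \VK{\beta} \cdot A^\varepsilon \left( e_\alpha + \nabla \VK{\alpha} \right) = 0$ expressed at the level of the correctors, combined with Lemma~\ref{lem:stiffness-MsFEM-P1}. Your handling of the possibly nonsymmetric $A^\varepsilon$ (keeping $\nabla\VK{\beta}$ in the test slot of the bilinear form) is also the correct reading of the weak formulation of~\eqref{eq:diffusion-MsFEM-correctors}.
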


The non-intrusive MsFEM approach is generalized in Sec.~\ref{sec:noni-gen} after the development of a general framework to define a wide variety of MsFEMs in Sec.~\ref{sec:gen-framework}. Lemma~\ref{lem:IBP-bubbles} does not generalize to the full framework. We will see the conditions under which the non-intrusive approach leads to a Petrov-Galerkin MsFEM in Lemma~\ref{lem:IBP-bubbles-gen}.
\subsection{Relation to homogenization theory}
	\label{sec:homogenization}


We highlight in this section the fact that many ingredients of our non-intrusive MsFEM approach are reminiscent of standard quantities of homogenization theory, or the theory of $H$-convergence, which studies the limit of a sequence of solutions~$u^\varepsilon$ to a PDE as~$\varepsilon$ tends to~$0$. This relation to $H$-convergence provides an interesting interpretation of the effective tensor~$\overline{A}$ introduced in~\eqref{eq:diffusion-eff-msfem-gal}.

Let us suppose in this section (and in this section only, except for Sec.~\ref{sec:homogenization-convergence}) that $A^\varepsilon(x) = A^\mathsf{per}(x/\varepsilon)$ for some bounded, $\bbZ^d$-periodic matrix~$A^{\mathsf{per}}$ satisfying the coercivity property in~\eqref{ass:bounds}. In this case, the sequence of matrices~$A^\varepsilon$ has a homogenized limit that is explicitly known. (An explicit characterization of the limit is not available for $H$-convergence in general.) We summarize the main results below. See, for instance,~\cite{bensoussan_asymptotic_1978, zhikov_homogenization_1994} or~\cite[Chapter 1]{allaire_shape_2002} for details on periodic homogenization.

Due to $H$-convergence of~$A^\varepsilon$, the functions $u^\varepsilon$, solution to~\eqref{eq:diffusion-pde}, converge to a limit function~$u^\star$ (weakly in~$H^1(\Omega)$, strongly in~$L^2(\Omega)$) as $\varepsilon \to 0$. The homogenized limit $u^\star$ is the solution to the homogenized equation~\eqref{eq:diffusion-hom-pde} below. 

Let $Q$ denote the unit cube of $\bbR^d$. 
We introduce the corrector functions $w_1,\dots,\,w_d \in H^1_{per}(Q)$ solution to
\begin{equation}
\left\{
\begin{IEEEeqnarraybox}[][c]{uts?s}
    \IEEEstrut
    $-{\operatorname{div}(A^\mathsf{per} \nabla w_\alpha)}$
    &$=$
    &$\operatorname{div}(A^\mathsf{per}e_\alpha)$
    &in $\bbR^d$,
    \\
    $w_\alpha$
    &is
    &$Q$-periodic,
    \IEEEstrut
    \IEEEeqnarraynumspace 
\end{IEEEeqnarraybox}
\right. 
\label{eq:diffusion-correctors}
\end{equation}
which uniquely defines $w_\alpha$ up to an irrelevant additive constant. The entries of the (constant) homogenized diffusion tensor~$A^\star \in \bbR^{d\times d}$ are given by
\begin{equation}
    A^\star_{\beta,\alpha} 
    =
    \int_Q (e_\beta + \nabla w_\beta) \cdot A^\mathsf{per}(e_\alpha + \nabla w_\alpha),
    \qquad
	1 \leq \alpha, \, \beta \leq d.
    \label{eq:diffusion-hom-coef}
\end{equation}
The homogenized limit $u^\star$ of $u^\varepsilon$ is the unique solution in $H^1_0(\Omega)$ to the boundary value problem
\begin{equation}
\left\{
\begin{IEEEeqnarraybox}[][c]{uts?s}
    \IEEEstrut
    $-{\operatorname{div}(A^\star \nabla u^\star)}$
    &$=$
    &$f$ 
    &in $\Omega$,
    \\
    $u^\star$
    &$=$
    &$0$
    &on $\partial \Omega$.
    \IEEEstrut
    \IEEEeqnarraynumspace 
\end{IEEEeqnarraybox}
\right. 
\label{eq:diffusion-hom-pde}
\end{equation}
The truncated reconstruction of $u^\star$ that is called the first-order two-scale expansion takes the form
\begin{equation}
    u^{\varepsilon,1}(x) =
    u^\star(x) +
    \varepsilon \sum_{\alpha=1}^d \partial_\alpha u^\star(x) \, w_\alpha \left(\frac{x}{\varepsilon}\right).
\label{eq:expansion-1-hom}
\end{equation}
Under suitable regularity assumptions, the difference $u^\varepsilon - u^{\varepsilon,1}$ converges to~$0$ strongly in~$H^1(\Omega)$ as $\varepsilon\to0$. This property will be used for the convergence results in Sec.~\ref{sec:homogenization-convergence}.

In the periodic setting, the expansion~\eqref{eq:expansion-1-hom} can be used to construct a numerical approximation of~$u^\varepsilon$, without the need of any computations at the fine scale. This approximation is presumably valid only in the regime of very small parameters~$\varepsilon$ and deteriorates if~$\varepsilon$ grows. Moreover, in more general settings, the corrector functions are not local nor explicit, for their definition involves a PDE posed on the whole domain~$\Omega$ and that depends on an effective tensor that is itself defined in terms of the corrector functions. Details can be found, e.g., in~\cite{allaire_shape_2002, murat_h-convergence_1997, zhikov_homogenization_1994}. This prevents the $H$-convergence theory from being directly applicable for the numerical approximation of~$u^\varepsilon$.

Numerical homogenization techniques, that draw their inspiration from the various elements above, offer an alternative for the approximation of~$u^\varepsilon$ that can be applied in much more general contexts. We can see the similarities between the corrector functions~$w_\alpha$ in~\eqref{eq:diffusion-correctors} and the numerical correctors~$\VK{\alpha}$ in~\eqref{eq:diffusion-MsFEM-correctors}. Note that the~$\VK{\alpha}$ solve problems similar to~\eqref{eq:diffusion-correctors}, but that they need to be solved at the microscale and on each mesh element~$K$. Similarly, we note the resemblance between the reconstruction~\eqref{eq:expansion-1-hom} and the definition of~$u_H^\varepsilon$ in~\eqref{eq:diffusion-MsFEM-noni}, and between the homogenized coefficient~$A^\star$ defined in~\eqref{eq:diffusion-hom-coef} and the effective macroscopic coefficient $\overline{A}$ from~\eqref{eq:diffusion-eff-msfem-gal}. However, contrary to~$A^\star$, the MsFEM quantity~$\overline{A}$ has to be computed on an element-by-element basis, and it is not necessarily constant throughout $\Omega$. 
Finally, the MsFEM analogue of the homogenized problem~\eqref{eq:diffusion-hom-pde} is the resolution of the effective macroscale problem~\eqref{eq:diffusion-pde-effective}.

\begin{example}
A very particular setting, although academic in nature and only useful for pedagogical purposes, actually leads to an MsFEM approximation that is exactly equivalent to a discretization of the periodic homogenization setting. Consider~\eqref{eq:diffusion-pde} in 2D posed on the unit square. Let us consider a mesh consisting of squares that are perfectly aligned with the periodicity of~$A^\varepsilon$. We solve the corrector problems~\eqref{eq:diffusion-MsFEM-correctors} on all square mesh elements with periodic boundary conditions and subsequently compute the effective diffusion tensor~$\overline{A}$ according to~\eqref{eq:diffusion-eff-msfem-gal}. 

In this case, the problems for the numerical correctors all reduce to~\eqref{eq:diffusion-correctors} and~$\overline{A}$ is constant and equal to the homogenized coefficient~$A^\star$ as defined by~\eqref{eq:diffusion-hom-coef}. A~$\mathbb{Q}_1$ discretization of the effective problem~\eqref{eq:diffusion-pde-effective} thus constitutes a non-intrusive MsFEM that is equivalent to the $\mathbb{Q}_1$ approximation of the homogenized equation~\eqref{eq:diffusion-hom-pde}.
\end{example}

\section{Why develop a general framework?}
	\label{sec:gen-motivation}


In the sequel we develop a general framework for a wide variety of MsFEMs in an abstract setting. We motivate here why this general framework for MsFEMs is useful.

\subsection{Local boundary conditions}
\label{sec:local-bc}

First, let us explain why various MsFEMs have been proposed in the literature. One reason is that different equations than~\eqref{eq:diffusion-pde} (e.g. advection-diffusion equations) give rise to different choices of the local problem~\eqref{eq:MsFEM-basis}, depending on which terms of the global PDE are included (see, for instance, \cite{le_bris_numerical_2017,biezemans_difficult_nodate}.) 

The other reason is that, even for the pure diffusion problem~\eqref{eq:diffusion-pde}, the choice of the basis functions defined in~\eqref{eq:MsFEM-basis} has an important drawback. The definition of the multiscale basis functions requires a choice of \emph{arbitrary} boundary conditions on the mesh element boundary~$\partial K$, since the exact boundary condition satisfied by~$u^\varepsilon$ is unknown. In~\eqref{eq:MsFEM-basis}, affine boundary conditions are imposed. In view of this choice, we shall refer to the MsFEM defined above as the `MsFEM-lin'. 

The MsFEM-lin cannot yield an accurate representation of $u^\varepsilon$ near $\partial K$ if $A^\varepsilon$ is highly oscillatory and the mesh $\mesh$ is coarse. Variations on the definition of the functions $\phiEps{i}$ have been proposed to improve the MsFEM. Here we summarize the ideas of \emph{oversampling} and of MsFEM \emph{\`{a} la Crouzeix-Raviart}, which together inspire the formulation of a general MsFEM framework in Sec.~\ref{sec:gen-framework}.

The oversampling variant of the MsFEM was introduced along with the variant based on~\eqref{eq:MsFEM-basis} at the time of its first appearance in~\cite{hou_multiscale_1997}. For this method, an oversampling domain~$S_K$ is associated to each mesh element~$K$ (details are provided in Sec.~\ref{sec:ospatch}). The problems~\eqref{eq:MsFEM-basis} are solved on the larger domain~$S_K$ rather than~$K$, so the inadequate boundary conditions are pushed away from the actual mesh elements. To construct the multiscale basis functions, the resulting functions on~$S_K$ are restricted to the actual mesh elements~$K$ and suitably combined around each vertex~$x_i$. The new multiscale basis functions oscillate on~$\partial K$ if the oversampling patch is taken large enough. We note that, in general, this strategy leads to discontinuous basis functions. Hence, the finite element space obtained is no longer conforming.

The MsFEM with Crouzeix-Raviart type boundary conditions for the local problems (which we shall abbreviate as `MsFEM-CR') was introduced in \cite{le_bris_msfem_2013}. 
It uses basis functions associated to the edges of the mesh (in contrast to the MsFEM-lin presented above, and its oversampling variant, where basis functions are associated to the vertices of the mesh). A typical basis function satisfies the following on~$\partial K$: the flux through each face of~$K$ is constant, and the constants are determined by the condition that the average of the basis function be~1 over one particular face and~0 over all other faces. 
Again, this is a way to avoid imposing any conditions on the trace of the basis function directly. The multiscale functions can thus be oscillatory on the faces of the mesh.
As is the case for oversampling methods, the resulting finite element space is nonconforming.

All of these variations, applied to any MsFEM for linear second-order PDEs, are covered by the general MsFEM framework that we develop in Sec.~\ref{sec:gen-framework}.

\subsection{The non-intrusive approach}

The intrusiveness of the specific MsFEM-lin variant introduced in Sec.~\ref{sec:msfem} is exemplary for all MsFEMs described in Sec.~\ref{sec:local-bc}. It turns out that the non-intrusive MsFEM approach introduced in~\cite{biezemans_non-intrusive_2023} and recalled in Sec.~\ref{sec:msfem-diffusion-nonin} can also be generalized to all these MsFEM variants. We summarize the key ingredients that allow for the formulation of the non-intrusive MsFEM approach of Algorithm~\ref{alg:msfem-diff-noni} (corresponding to the identities in boxes in Sec.~\ref{sec:msfem-effective}).

The non-intrusive MsFEM follows from the expansion~\eqref{eq:diffusion-MsFEM-Vxy-grad}, namely the expression of the multiscale basis function as a $\Pone$ basis function and a linear combination of numerical correctors that are fully localized. We note that
\begin{itemize}
    \item the full localization of the numerical correctors defined in~\eqref{eq:diffusion-MsFEM-correctors} allows the preprocessing of the microstructure independently of the global approximation indices related to the finite element method;
    \item the expansion~\eqref{eq:diffusion-MsFEM-Vxy-grad} follows from the fact that $\nabla \phiPone{i}$ is piecewise constant combined with linearity of the local problems~\eqref{eq:MsFEM-basis};
    \item the stiffness matrix can be formulated in terms of a piecewise constant effective diffusion tensor in~\eqref{eq:stiffness-msfem-p1basis} thanks to full localization of the corrector functions, the piecewise constant gradient of~$\phiPone{i}$ in the expansion~\eqref{eq:diffusion-MsFEM-Vxy-grad} and bilinearity of the global problem~\eqref{eq:diffusion-vf}.
\end{itemize}

These observations provide the main structure of the general framework. First, we choose an underlying, low-dimensional space of \emph{piecewise affine} functions to which the MsFEM is associated (Def.~\ref{def:underlying-space}). This will be the standard conforming Lagrange space of order 1 (for the MsFEM-lin), or the Crouzeix-Raviart space of order 1 (for the MsFEM-CR). Second we need to formulate the local problems for the numerical correctors (Def.~\ref{def:corr-dofe} and Def.~\ref{def:corr-dofc}). This involves the definition of oversampling patches (for MsFEMs with oversampling, Def.~\ref{def:ospatch}), and an extension of the notion of degrees of freedom to define the boundary conditions for the numerical correctors (Def.~\ref{def:DOF}, \ref{def:DOF-os-lin} and~\ref{def:DOF-os-cr}) on oversampling patches. It is then possible to define the multiscale basis functions as a generalization of~\eqref{eq:diffusion-MsFEM-Vxy-grad} (see Def.~\ref{def:msbasis-OS}) and finally to define the MsFEM for our general framework in Def.~\ref{def:gen-MsFEM}.

\begin{remark}
    \label{rem:p1}
    We note that our development of non-intrusive MsFEM approaches relies to a great extent on the fact that~\eqref{eq:diffusion-MsFEM-Vxy-grad}, and its generalization~\eqref{eq:MsFEM-Vxy} in the general framework developed below, provide a description of the multiscale basis functions in terms of $\Pone$ basis functions, without the need of higher-order functions, in a linear manner. Higher-order MsFEMs can be found in~\cite{allaire_multiscale_2005, hesthaven_high-order_2014} (see also~\cite{legoll_msfem_2022}). Possible analogues of~\eqref{eq:MsFEM-Vxy} for such MsFEMs and the subsequent techniques to design a non-intrusive MsFEM variant are more involved and may be the topic of future work. See~\cite{biezemans_difficult_nodate}.
\end{remark}

\subsection{Other motivations for the general framework}

Besides a unified formulation of our non-intrusive MsFEM approach, our general framework can also be beneficial to concrete code development for the MsFEM. Common features among various multiscale methods have previously been used to design flexible and efficient software for the implementation of such methods on the DUNE platform~\cite{bastian_generic_2008, bastian_generic_2008-1} within the \textsc{Exa-Dune} project~\cite{bastian_exa-dune_2014}. For example, the distribution of local problems over multiple processors and subsequent coupling in a global problem are handled by designated software components~\cite{bungartz_advances_2016}. Our work may contribute to the efficient implementation of all MsFEMs covered by our general framework in such a project and similar endeavours yet to come. 

When formulating the general framework, we also clarify a few practical matters that are often left pending in the various research articles we are aware of. In particular, we give a rigorous definition of the oversampling procedure near the boundary~$\partial \Omega$ of the global domain.

As we explore the general framework, we will also propose an MsFEM variant that has not yet appeared in the literature: the MsFEM-CR combined with the oversampling technique (see Example~\ref{ex:msfem-cr-basis}). We hope that our framework may also further the development of new MsFEM variants in an attempt to improve on the shortcomings of the methods known today.

Finally, the present study may also uncover a deeper understanding of MsFEMs by paving the way to a unified convergence analysis of different variants. This work is currently in preparation.
\section{Abstract definition of the MsFEM}
	\label{sec:gen-framework}


We develop here a general framework for multiscale finite element methods. The ultimate aim is to generalize the key identities of Sec.~\ref{sec:msfem-effective}. This is done in Def.~\ref{def:corr-dofe} and~\ref{def:corr-dofc} for the numerical correctors introduced in~\eqref{eq:diffusion-MsFEM-correctors}, and in Def.~\ref{def:msbasis-OS} for the expansion~\eqref{eq:diffusion-MsFEM-Vxy-grad} of the multiscale basis functions. This allows the reformulation of the linear system of the MsFEM as the linear system of an effective problem in~\eqref{eq:gen-FEM-effective} (for a Petrov-Galerkin MsFEM) and~\eqref{eq:gen-FEM-effective-gal} (for a Galerkin MsFEM) in Sec.~\ref{sec:noni-gen}. The other notions introduced in this section, although rather technical and abstract, are necessary tools to capture a wide variety of MsFEMs in our general framework.

\subsection{The continuous problem}
The abstract variational problem for our general MsFEM framework is as follows.  
Let $a^\varepsilon$ be a continuous bilinear form on $H^1(\Omega) \times H^1(\Omega)$.
We are interested in the solution to the problem
\begin{equation}
	\text{Find } u^\varepsilon \in H^1_0(\Omega) \text{ such that }
	a^\varepsilon(u^\varepsilon, v) = F(v)
	\text{ for any } v \in H^1_0(\Omega),
	\label{eq:gen-pb}
\end{equation}
where~$F$ is defined as in~\eqref{eq:diffusion-bilin-form} for any $f\in L^2(\Omega)$. To ensure well-posedness of~\eqref{eq:gen-pb}, we suppose that the bilinear form~$a^\varepsilon$ is coercive on~$H^1_0(\Omega)$. 
The bilinear form~$a^\varepsilon$ may contain coefficients that oscillate on a microscopically small scale.

The oversampling and Crouzeix-Raviart variants of the MsFEM introduced in Sec.~\ref{sec:local-bc} show that we need to accommodate for approximation spaces with discontinuities at the interfaces. This requires some additional assumptions on the formulation of the abstract problem. 
We suppose that the bilinear form~$a^\varepsilon$ is in fact defined on the broken Sobolev space $H^1(\mesh) \times H^1(\mesh)$. More precisely, we assume that we can represent it as $a^\varepsilon = \sum\limits_{K\in\mesh} a^\varepsilon_K$, where, for each $K \in \mesh$, $a^\varepsilon_K$ is a continuous bilinear form defined on $H^1(K) \times H^1(K)$.

To ensure well-posedness of MsFEMs, which may use nonconforming approximation spaces, coercivity on~$H^1_0(\Omega)$ may be insufficient. Therefore, we add the following coercivity hypothesis for the bilinear forms~$a_K^\varepsilon$:
\begin{equation}
\begin{IEEEeqnarraybox}[][c]{s}
	\IEEEstrut
	for all $K \in \mesh$, there exists $\alpha_K > 0$ such that
	\\
	\quad
	$
	\forall \, u \in H^1(K),
	\quad
	a_K^\varepsilon(u,u)
		\geq
	\alpha_K \, \lVert \nabla u \rVert_{L^2(K)}^2.
	$
	\IEEEstrut
	\IEEEeqnarraynumspace 
\end{IEEEeqnarraybox}
\label{eq:gen-pb-coer}
\end{equation}
In order to perform a convergence analysis, one also has to assume that the~$\alpha_K$ are bounded from below by some~$\tilde{\alpha}>0$ that does not depend on~$H$. We provide convergence results in Sec.~\ref{sec:compare-gal-pg} for the pure diffusion problem~\eqref{eq:diffusion-pde}, in which case we have $\alpha_K=m$ from~\eqref{ass:bounds}. 

As an example, the introductory problem~\eqref{eq:diffusion-pde} with the associated bilinear form~$a^{\varepsilon,\dif}$ is covered by this framework as is made explicit in Example~\ref{ex:diffusion-vf-gen} below. Other second-order PDEs that fit in our abstract variational formulation are given in Example~\ref{ex:gen-vfs1}. 

\begin{example}
\label{ex:diffusion-vf-gen}
The diffusion problem~\eqref{eq:diffusion-pde} is covered by the abstract variational formulation above. Indeed, we can set
\[
	a^\varepsilon = a^{\varepsilon,\dif},
\]
where~$a^{\varepsilon,\dif}$ is the bilinear form defined in~\eqref{eq:diffusion-bilin-form}. Further, we define 
\[
	a^\varepsilon_K(u,v)
	=
	a^{\varepsilon,\dif}_K(u,v)
	\coloneqq 
 	\int_K \nabla v \cdot A^\varepsilon \nabla u,
\]
for all~$u,v \in H^1(\mesh)$, so that we have indeed $a^{\varepsilon,\dif} = \sum\limits_{K\in\mesh} a^{\varepsilon,\dif}_K$. Clearly, each~$a^{\varepsilon,\dif}_K$ satisfies~\eqref{eq:gen-pb-coer} with $\alpha_K=m$ the coercivity constant from~\eqref{ass:bounds}.
\end{example}

\begin{example}
The reaction-advection-diffusion equation,
\begin{equation*}
 -{\operatorname{div}(A^\varepsilon \nabla u^\varepsilon)} + b\cdot\nabla u^\varepsilon + \sigma u^\varepsilon= f,
\end{equation*}
with a divergence-free advection field $b:\Omega \mapsto \bbR^d$ and a non-negative reaction coefficient $\sigma : \Omega \mapsto \bbR$,
can be modelled (under some regularity hypotheses that we do not state here) with the bilinear forms
\begin{equation*}
	a^{\varepsilon}_K(u,v)
	=
	\int_K \nabla v \cdot A^\varepsilon \nabla u
	+ v \, b \cdot \nabla u + \sigma u v.
\end{equation*}
However, these bilinear forms~$a_K^\varepsilon$ do not satisfy~\eqref{eq:gen-pb-coer} even though the bilinear form $a^\varepsilon$ is coercive on~$H^1_0(\Omega)$. To this end, a skew-symmetrized formulation of the transport term can be used. 
The skew-symmetrized formulation uses the bilinear form
\begin{equation}
	a^{\varepsilon}_K(u,v)
	=
	\int_K \nabla v \cdot A^\varepsilon \nabla u
	+ \frac{1}{2} v \, b \cdot \nabla u 
    - \frac{1}{2} u \, b \cdot \nabla v
    + \sigma u v,
    \label{eq:ex-skew-symmetric}
\end{equation}
which does satisfy~\eqref{eq:gen-pb-coer}. Assumption~\eqref{eq:gen-pb-coer} is used for proving well-posedness of the MsFEM in Lemma~\ref{lem:MsFEM-OS-well-posedness}, but note that both choices for~$a_K^{\varepsilon}$ mentioned here can be studied in practice. 
We refer e.g.~to~\cite{john_nonconforming_1997, le_bris_multiscale_2019,biezemans_difficult_nodate} for more details. 
Within the general MsFEM framework, $b$ and~$\sigma$ are allowed to be highly oscillatory, and this may impact the specific MsFEM strategy to be preferred.
\label{ex:gen-vfs1}
\end{example}

\subsection{Piecewise affine structure}
In Sec.~\ref{sec:msfem}, we have seen that the relation between multiscale basis functions and piecewise affine functions is essential for the development of our non-intrusive MsFEM. For the MsFEM definition in the general framework, we start by choosing such a structure in the following definition.

\begin{definition}
Let a mesh~$\mesh$ be given. The \textbf{underlying \ $\Pone$ space} for the MsFEM, denoted~$V_H$, is one of the following two spaces: the Lagrange approximation space
\begin{equation*}
    V_H^{L} = 
    \{ v \in \Pone(\mesh) \mid v \text{ is continuous on } \Omega \},
\end{equation*}
in which case we shall refer to the associated MsFEM as the MsFEM-lin, or the Crouzeix-Raviart approximation space
\begin{equation*}
    V_H^{CR} = 
    \left\{ v \in \Pone(\mesh) \mid \forall\ K \in \mesh, \ \forall e \in \mathcal{F}(K) \text{ such that } e \subset \Omega : \ \int_e \llbracket v \rrbracket = 0 \right\},
\end{equation*}
in which case the associated MsFEM shall be called the MsFEM-CR. We use the notation $\mathcal{F}(K)$ for the set of faces of~$K$ and $\llbracket v \rrbracket$ denotes the jump of~$v$ over the face~$e$. The space $V_H^L$ is a subspace of~$H^1(\Omega)$, but~$V_H^{CR}$ is not. Note that no restrictions apply on faces lying on~$\partial\Omega$. 
\label{def:underlying-space}
\end{definition}

We note that the underlying $\Pone$ space has the following property: if $v \in V_H$ is piecewise constant on the mesh~$\mesh$, then~$v$ is constant in~$\Omega$. Contrary to the space~$V_H^L$, functions in the Crouzeix-Raviart space~$V_H^{CR}$ are discontinuous in general. They are continuous, however, at the centroids of all faces of the mesh. 

For standard finite elements, the notion of degrees of freedom allows to characterize any finite element function. The idea of the MsFEM is to preserve this notion of degrees of freedom (in a suitable way made precise below) in the definition of a multiscale approximation space, while adapting the piecewise affine structure to the microstructure of the PDE.
We formalize this notion for the two underlying $\Pone$ spaces that we introduced in Def.~\ref{def:underlying-space}. The definition involves an arbitrary simplex $K$, which is typically an element of the mesh~$\mesh$, or an associated oversampling patch (for the oversampling technique of the MsFEM) that we shall define in Def.~\ref{def:ospatch}. The latter is not always a simplex, and we extend Def.~\ref{def:DOF} to such oversampling patches in Def.~\ref{def:DOF-os-lin} and~\ref{def:DOF-os-cr}.

\begin{definition}
    \label{def:DOF}
    A \textbf{degree of freedom operator} (\DOF~operator)~$\Gamma$ associates to any simplex $K \subset \bbR^d$ and $v \in \Pone(K)$ a vector $\Gamma(K,v) \in \bbR^{d+1}$, whose components are called the \textbf{degrees of freedom} of~$v$ on~$K$, in such a way that the application $v \mapsto \Gamma(K,v)$ is a linear bijection from~$\Pone(K)$ to~$\bbR^{d+1}$. More precisely, $\Gamma(K,\cdot)$ will denote in the sequel one of the following two operators:
    \begin{enumerate}
        \item (\DOF~operator for the MsFEM-lin.) Let $x_{0},\dots,x_{d}$ denote the vertices of~$K$. We set
        \begin{equation*}
            \forall \, v \in \Pone(K),
            \qquad
            \Gamma^L(K,v) = \left( v(x_0),\dots,v(x_d) \right).
        \end{equation*}
        For~$K \in \mesh$, the degree of freedom $[\Gamma^{L}(K,\cdot)]_j$ is said to be \textbf{associated to the boundary} if, for all $v \in \Pone(K)$, $[\Gamma^{L}(K,v)]_j = v(x)$ for a vertex~$x$ of the mesh that lies on~$\partial \Omega$.

        \item (\DOF~operator for the MsFEM-CR.) Let $e_0,\dots,e_d$ denote the faces of~$K$. We set
        \begin{equation*}
            \forall \, v \in \Pone(K),
            \qquad
            \Gamma^{CR}(K,v) = \left( \frac{1}{|e_0|} \int_{e_0} v,\dots,\frac{1}{|e_d|} \int_{e_d} v \right).
        \end{equation*}
        For~$K \in \mesh$, the degree of freedom $[\Gamma^{CR}(K,\cdot)]_j$ is said to be \textbf{associated to the boundary} if, for all $v \in \Pone(K)$, $\displaystyle [\Gamma^{CR}(K,v)]_j = \frac{1}{|e|} \int_{e} v$ for a face~$e$ of the mesh that lies on~$\partial \Omega$.
    \end{enumerate}
    
    The \ \textbf{$\Pone$ test space} is defined as
    \begin{equation*}
        V_{H,0} =
        \left\{
            v \in V_H \ \left\vert \ 
            \begin{aligned}
                &\forall \, K \in \mesh, \ \forall \, 1 \leq j \leq d+1, [\Gamma(K,v)]_j=0 \text{ if the degree} \\
                &\text{of freedom } [\Gamma(K,\cdot)]_j \text{ is associated to the boundary} \ 
            \end{aligned}
            \right.
        \right\}.
    \end{equation*}
\end{definition}

The $\Pone$ test space is used in practice to approximate the subspace~$H^1_0(\Omega)$ of~$H^1(\Omega)$. 
The degrees of freedom are defined element per element and are thus local. 
Global properties of the underlying $\Pone$ space~$V_H$ are most easily made explicit through the identification of a basis for~$V_H$.


\begin{definition}
\label{def:underlying-basis}
Let~$V_H$ be an underlying $\Pone$ space as in Def.~\ref{def:underlying-space}, and let~$\Gamma$ be the associated \DOF~operator. We shall denote by~$N$ the dimension of~$V_H$. The \textbf{$\Pone$ basis functions} $\phiPone{1},\dots,\phiPone{N}$ are defined as follows:
\begin{itemize}
    \item For the MsFEM-lin, let $x_1,\dots,x_N$ be an enumeration of the (internal and boundary) vertices of~$\mesh$. Then $\phiPone{i}$ is defined by $\phiPone{i}(x_j) = \delta_{i,j}$ for all $1 \leq i,j \leq N$.
    \item For the MsFEM-CR, let $e_1,\dots,e_N$ be an enumeration of the (internal and boundary) faces of~$\mesh$. Then $\phiPone{i}$ is defined by $\displaystyle \frac{1}{|e_j|} \int_{e_j} \phiPone{i} = \delta_{i,j}$ for all $1 \leq i,j \leq N$.
\end{itemize}
In both cases, these functions form a basis of the corresponding space~$V_H$ of Def.~\ref{def:underlying-space}.
\end{definition}

\subsection{Local problems}

\subsubsection{Oversampling patches}
\label{sec:ospatch}
To replace the (standard) underlying $\Pone$ space by a space of the same (low) dimension, adapted to the microstructure of~$a^\varepsilon$, we associate to each mesh element $K \in \mesh$ an oversampling patch. It serves to avoid imposing artificial, non-oscillatory boundary conditions on~$K$ directly when computing numerical correctors to process the microstructure.

\begin{definition}
    \label{def:ospatch}
    Let $K\in\mesh$ be any mesh element and let $S_K'$ be a simplex obtained from $K$ by homothety around the centroid of~$K$ with homothety ratio~$\rho\geq1$. The \textbf{oversampling patch}~$S_K$ is defined as~$S_K = S_K' \cap \Omega$.
\end{definition}

See Fig.~\ref{fig:os} for an illustration of the construction of oversampling patches in dimension~2. In this work, we allow for the trivial homothety ratio $\rho=1$. In this case, the patch~$S_K$ coincides with~$K$.

We will call an MsFEM \textbf{without oversampling} an MsFEM for which all oversampling patches satisfy~$S_K=K$. Otherwise, the MsFEM is called an MsFEM \textbf{with oversampling}. We speak simply of an MsFEM when there are no assumptions on the oversampling patches.

\begin{figure}[ht]
    \centering
    \begin{subfigure}{.4\textwidth}
            \centering

\begin{tikzpicture}

\tikzmath{ 
    \xb = 0; \xt = 4.5; 
    \ncells =4;
    \gridsize = (\xt-\xb)/\ncells;
    \os = 2.35; 
} 

\coordinate (botLeft) at (\xb,\xb);
\coordinate (topRight) at (\xt,\xt);

\draw[step=\gridsize cm,gray,very thin] (botLeft) grid (topRight);
\draw[very thick] (botLeft) rectangle (topRight);
\node[anchor=south west] at (\xt,\xb){$\partial \Omega$} ;
    
\foreach \x in {0,...,\ncells} {
    \tikzmath{
        \xs = \xb + \x * \gridsize;
        \ys = \xb;
        \xe = \xt;
        \ye = \xt - \x * \gridsize;
    }
    \draw[gray, very thin] (\xs,\ys) -- (\xe,\ye);
}
\foreach \x in {-\ncells,...,-1} {
    \tikzmath{
        \xs = \xb;
        \ys = \xb - \x * \gridsize;
        \xe = \xt + \x * \gridsize;
        \ye = \xt;
    }
    \draw[gray, very thin] (\xs,\ys) -- (\xe,\ye);
}

\tikzmath{ 
    \choosex = 2; \choosey = 2;
    \tax = \xb + \choosex * \gridsize;
    \tay = \xb + \choosey * \gridsize;
    \tbx = \xb + (\choosex+1) * \gridsize;
    \tby = \tay;
    \tcx = \tbx;
    \tcy = \xb + (\choosey+1) * \gridsize;
}

\tikzmath{ 
    \xbar = (\tax+\tbx+\tcx)/3;
    \ybar = (\tay+\tby+\tcy)/3;
    \tosax = \xbar + \os*(\tax-\xbar);
    \tosay = \ybar + \os*(\tay-\ybar);
    \tosbx = \xbar + \os*(\tbx-\xbar);
    \tosby = \ybar + \os*(\tby-\ybar);
    \toscx = \xbar + \os*(\tcx-\xbar);
    \toscy = \ybar + \os*(\tcy-\ybar);
}

\filldraw[fill=blue!40, draw=black,thick] (\tosax,\tosay) -- (\tosbx,\tosby) -- (\toscx,\toscy) -- cycle;

\filldraw[fill=white, draw=black,thick] (\tax,\tay) -- (\tbx,\tby) -- (\tcx,\tcy) -- cycle;

\node[anchor=south east] at (\tbx,\tby){\textcolor{blue!15!red!90!black}{$K$}} ;
\node[anchor=south east] at (\tosbx,\tosby){\textcolor{blue!15!red!90!black}{${S_K}$}} ;

\end{tikzpicture}
        \caption{}
        \label{fig:os-inside}
    \end{subfigure}
    \hspace{0.08\textwidth}
    \begin{subfigure}{.45\textwidth}
            \centering

\begin{tikzpicture}

\tikzmath{
    \xb = 0; \xt = 4.5; 
    \ncells = 3;
    \gridsize = (\xt-\xb)/\ncells;
    \os = 2.35; 
} 

\coordinate (botLeft) at (\xb,\xb);
\coordinate (topRight) at (\xt,\xt);

\draw[step=\gridsize cm,gray,very thin] (botLeft) grid (topRight);

\foreach \x in {0,...,\ncells} {
    \tikzmath{
        \xs = \xb + \x * \gridsize;
        \ys = \xb;
        \xe = \xt;
        \ye = \xt - \x * \gridsize;
    }
    \draw[gray, very thin] (\xs,\ys) -- (\xe,\ye);
}
\foreach \x in {-\ncells,...,-1} {
    \tikzmath{
        \xs = \xb;
        \ys = \xb - \x * \gridsize;
        \xe = \xt + \x * \gridsize;
        \ye = \xt;
    }
    \draw[gray, very thin] (\xs,\ys) -- (\xe,\ye);
}

\tikzmath{ 
    \choosex = \ncells-1; \choosey = \ncells-1;
    \tax = \xb + \choosex * \gridsize;
    \tay = \xb + \choosey * \gridsize;
    \tbx = \tax;
    \tby = \xb + (\choosey+1) * \gridsize;
    \tcx = \xb + (\choosex+1) * \gridsize;
    \tcy = \tby;
}

\tikzmath{ 
    \xbar = (\tax+\tbx+\tcx)/3;
    \ybar = (\tay+\tby+\tcy)/3;
    \tosax = \xbar + \os*(\tax-\xbar);
    \tosay = \ybar + \os*(\tay-\ybar);
    \tosbx = \xbar + \os*(\tbx-\xbar);
    \tosby = \ybar + \os*(\tby-\ybar);
    \toscx = \xbar + \os*(\tcx-\xbar);
    \toscy = \ybar + \os*(\tcy-\ybar);
}

\tikzmath{ 
    \tpax = \tosax;
    \tpay = \tosay;
    \tpbx = \tosbx;
    \tpby = \xt;
    \tpcx = \xt;
    \tpcy = \xt;
    \tpdx = \xt;
    \tpdy = \tpay - \tpax + \xt;
}

\filldraw[fill=blue!40, opacity=0.5, thin, draw=black,thick] (\tosax,\tosay) -- (\tosbx,\tosby) -- (\toscx,\toscy) -- cycle;
\filldraw[fill=blue!40, draw=black,thick] (\tpax,\tpay) -- (\tpbx,\tpby) -- (\tpcx,\tpcy) -- (\tpdx,\tpdy) -- cycle;
\filldraw[fill=white, draw=black,thick] (\tax,\tay) -- (\tbx,\tby) -- (\tcx,\tcy) -- cycle;

\node[anchor=north west] at (\tbx,\tby){\textcolor{blue!15!red!90!black}{$K$}} ;
\node[anchor=north west] at (\tosbx,\tosby){\textcolor{blue!15!red!90!black}{${S_K'}$}} ;
\node[anchor=south west, yshift=0.4*\gridsize cm] at (\tpax,\tpay){\textcolor{blue!15!red!90!black}{${S_K}$}} ;

\draw[very thick] (\xb,\xt) -- (topRight) -- (\xt,\xb) ;
\node[anchor=south west] at (\xt,\xb){$\partial \Omega$} ;

\end{tikzpicture}
        \caption{}
        \label{fig:os-boundary}
    \end{subfigure}
    \caption{Oversampling patches for MsFEM in 2D. Left: the patch for the mesh element~$K$ is obtained from $K$ by homothety. Right: The triangle $S_K'$ partially lies outside the domain $\Omega$ and the oversampling patch~$S_K$ is not homothetic to~$K$. It is not even a triangle.}
    \label{fig:os}
\end{figure}
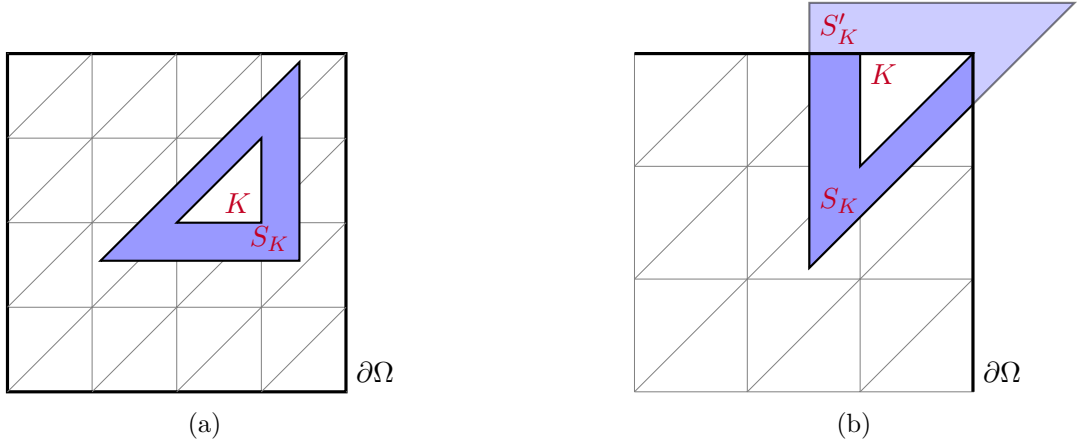

For most mesh elements~$K$, the patch~$S_K$ in Def.~\ref{def:ospatch} is a simplex. However, for mesh elements close to the boundary~$\partial \Omega$, alternative constructions should be considered. We have not found any explicit description of such a construction in the literature. This complicates the reproducibility of the method as well as a rigorous convergence analysis. The precise definitions of this section provide a first step to address these issues. A fully rigorous convergence analysis of the MsFEM with oversampling as described here is the subject of ongoing investigations~\cite{biezemans_difficult_nodate}.

\subsubsection{Degrees of freedom on oversampling patches}
\label{sec:dof-ospatch}
Definition~\ref{def:DOF} provides the definition of \DOF~operators on any simplex. For the MsFEM, we wish to compute multiscale functions on oversampling patches~$S_K$, in which case Def.~\ref{def:DOF} may be insufficient. We illustrated this in Fig.~\ref{fig:os-boundary}. Indeed, the number of vertices/faces of the oversampling patch may be larger than~$d+1$. In order to associate a multiscale basis function to every $\Pone$ basis function, we still need a notion of \DOF~operator such that $\Gamma(S_K,\cdot)$ is a linear bijection from~$\Pone(S_K)$ to~$\bbR^{d+1}$. Therefore, we extend the definition of the degrees of freedom operators~$\Gamma^L$ and~$\Gamma^{CR}$ in Def.~\ref{def:DOF-os-lin} and~\ref{def:DOF-os-cr}.

\begin{definition}
    \label{def:DOF-os-lin}
    Let $K\in\mesh$ and let~$S_K$ be its associated oversampling patch. Let~$x_0,\dots,x_d$ be a selection of~$d+1$ distinct vertices of~$S_K$. We define the \DOF~operator~$\Gamma^L$ by 
    \begin{equation*}
        \forall \, v \in \Pone(S_K),
        \qquad
        \Gamma^L(S_K,v) = \left( v(x_0),\dots,v(x_d) \right).
    \end{equation*}
\end{definition}

We note that any choice of~$d+1$ nodal values unequivocally characterizes an affine function on~$S_K$. Hence, $\Gamma^L(S_K,\cdot)$ is indeed a bijection. Now the precise choice of the vertices in Def.~\ref{def:DOF-os-lin} is unimportant, because~$\Gamma^L(S_K,\cdot)$ will only be used in the sequel to describe the trace of~$\Pone$ functions on~$\partial S_K$ in boundary value problems. For any~$\Pone$ function, this trace is uniquely defined by its values in~$d+1$ distinct vertices of~$S_K$. Finally, when~$S_K$ is a simplex, it has only~$d+1$ vertices and Def.~\ref{def:DOF-os-lin} reduces to Def.~\ref{def:DOF}.

\medskip

To generalize the notion of degrees of freedom for the Crouzeix-Raviart space to non-simplicial patches, we need to introduce some additional notation. 
On the boundary of a non-simplicial oversampling patch, we can identify some faces that collapse to a single vertex if we shrink~$S_K$ to~$K$. We call these faces the additional faces and denote the set containing them by~$\mathcal{F}_a(S_K)$. The other faces of~$S_K$ are referred to as the dilated faces, collected in the set~$\mathcal{F}_d(S_K)$. 
When the patch~$S_K$ does not touch~$\partial \Omega$, we have~$\mathcal{F}_d(S_K)=\mathcal{F}(S_K)$ and~$\mathcal{F}_a(S_K) = \emptyset$.
In Fig.~\ref{fig:os-CR}, for example, the additional faces are exactly those faces that lie on~$\partial\Omega$. This is not always the case, as is illustrated by Fig.~\ref{fig:os-lin}.

For the definition of~$\Gamma^{CR}(S_K,\cdot)$, we shall rely on the existence of~$d+1$ dilated faces, because we need $\Gamma^{CR}(S_K,\cdot)$ to be a bijection between $\Pone(S_K)$ and~$\bbR^{d+1}$. This imposes a constraint on the choice of the homothety ratio used to construct~$S_K$. For example, in the case of Fig.~\ref{fig:os-CR}, the lower right dilated face falls outside~$\Omega$ if the homothety ratio is too large, and the oversampling patch~$S_K$ only has two dilated faces (edges here) and two additional faces. We do not consider this case hereafter.
    
\begin{figure}[ht]
    \centering
    \begin{subfigure}{.4\textwidth}
            \centering

\begin{tikzpicture}

\tikzmath{
    \xb = 0; \xt = 4.5; 
    \ncells = 2;
    \gridsize = (\xt-\xb)/\ncells;
    \os = 2.35; 
} 

\coordinate (botLeft) at (\xb,\xb);
\coordinate (topRight) at (\xt,\xt);

\draw[step=\gridsize cm,gray,very thin] (botLeft) grid (topRight);

\foreach \x in {0,...,\ncells} {
    \tikzmath{
        \xs = \xb + \x * \gridsize;
        \ys = \xb;
        \xe = \xt;
        \ye = \xt - \x * \gridsize;
    }
    \draw[gray, very thin] (\xs,\ys) -- (\xe,\ye);
}
\foreach \x in {-\ncells,...,-1} {
    \tikzmath{
        \xs = \xb;
        \ys = \xb - \x * \gridsize;
        \xe = \xt + \x * \gridsize;
        \ye = \xt;
    }
    \draw[gray, very thin] (\xs,\ys) -- (\xe,\ye);
}

\tikzmath{ 
    \choosex = \ncells-1; \choosey = 0;
    \drawcorr = 0.015;
    \tax = \xb + \choosex * \gridsize;
    \tay = \xb + \choosey * \gridsize;
    \taydraw = \tay+\drawcorr*\gridsize;
    \tbx = \tax;
    \tby = \xb + (\choosey+1) * \gridsize;
    \tcx = \xb + (\choosex+1) * \gridsize;
    \tcxdraw = \tcx-\drawcorr*\gridsize;
    \tcy = \tby;
}

\tikzmath{ 
    \xbar = (\tax+\tbx+\tcx)/3;
    \ybar = (\tay+\tby+\tcy)/3;
    \tosax = \xbar + \os*(\tax-\xbar);
    \tosay = \ybar + \os*(\tay-\ybar);
    \tosbx = \xbar + \os*(\tbx-\xbar);
    \tosby = \ybar + \os*(\tby-\ybar);
    \toscx = \xbar + \os*(\tcx-\xbar);
    \toscy = \ybar + \os*(\tcy-\ybar);
}

\tikzmath{ 
    \tpax = \tosax;
    \tpay = \xb;
    \tpbx = \tosbx;
    \tpby = \tosby;
    \tpcx = \xt;
    \tpcy = \toscy;
    \tpdx = \tcx; \tpdy = \tcx + \tosay - \tosax;
    \tpex = \tay - \tosay + \tosax; \tpey = \tay;
}

\fill[blue!40] (\tpax,\tpay) -- (\tpbx,\tpby) -- (\tpcx,\tpcy) -- (\tpdx,\tpdy) -- (\tpex,\tpey) -- cycle;
\filldraw[fill=white, draw=black] (\tax,\taydraw) -- (\tbx,\tby) -- (\tcxdraw,\tcy) -- cycle;
\draw[densely dashed, blue!15!red!80!black,very thick] (\tpax,\tpay) -- (\tpbx,\tpby) -- (\tpcx,\tpcy) (\tpdx,\tpdy) -- (\tpex,\tpey);

\node[anchor=north west] at (\tbx,\tby){\textcolor{black}{$K$}} ;
\node[anchor=south west, yshift=0.3*\gridsize cm] at (\tpax,\tpay){\textcolor{black}{${S_K}$}} ;

\draw[very thick] (topRight) -- (\xt,\xb) -- (botLeft) ;
\node[anchor=south west] at (\xt,\xb){$\partial \Omega$} ;

\end{tikzpicture}
        \caption{}
        \label{fig:os-CR}
    \end{subfigure}
    \hspace{0.1\textwidth}
    \begin{subfigure}{.4\textwidth}
            \centering

\begin{tikzpicture}

\tikzmath{
    \xb = 0; \xt = 4.5; 
    \ncells = 2;
    \gridsize = (\xt-\xb)/\ncells;
    \os = 2.35; 
} 

\coordinate (botLeft) at (\xb,\xb);
\coordinate (topRight) at (\xt,\xt);

\draw[step=\gridsize cm,gray,very thin] (botLeft) grid (topRight);

\foreach \x in {0,...,\ncells} {
    \tikzmath{
        \xs = \xb + \x * \gridsize;
        \ys = \xb;
        \xe = \xt;
        \ye = \xt - \x * \gridsize;
    }
    \draw[gray, very thin] (\xs,\ys) -- (\xe,\ye);
}
\foreach \x in {-\ncells,...,-1} {
    \tikzmath{
        \xs = \xb;
        \ys = \xb - \x * \gridsize;
        \xe = \xt + \x * \gridsize;
        \ye = \xt;
    }
    \draw[gray, very thin] (\xs,\ys) -- (\xe,\ye);
}

\tikzmath{ 
    \choosex = \ncells-1; \choosey = \ncells-1;
    \drawcorr = 0.015;
    \tax = \xb + \choosex * \gridsize;
    \tay = \xb + \choosey * \gridsize;
    \tbx = \tax;
    \tby = \xb + (\choosey+1) * \gridsize;
    \tcx = \xb + (\choosex+1) * \gridsize;
    \tcxdraw = \tcx - \drawcorr*\gridsize;
    \tcy = \tby;
}

\tikzmath{ 
    \xbar = (\tax+\tbx+\tcx)/3;
    \ybar = (\tay+\tby+\tcy)/3;
    \tosax = \xbar + \os*(\tax-\xbar);
    \tosay = \ybar + \os*(\tay-\ybar);
    \tosbx = \xbar + \os*(\tbx-\xbar);
    \tosby = \ybar + \os*(\tby-\ybar);
    \toscx = \xbar + \os*(\tcx-\xbar);
    \toscy = \ybar + \os*(\tcy-\ybar);
}

\tikzmath{ 
    \tpax = \tosax;
    \tpay = \tosay;
    \tpbx = \tosbx;
    \tpby = \xt;
    \tpcx = \xt;
    \tpcy = \xt;
    \tpdx = \xt;
    \tpdy = \tpay - \tpax + \xt;
}

\fill[blue!40] (\tpax,\tpay) -- (\tpbx,\tpby) -- (\tpcx,\tpcy) -- (\tpdx,\tpdy) -- cycle;
\filldraw[fill=white, draw=black] (\tax,\tay) -- (\tbx,\tby) -- (\tcxdraw,\tcy) -- cycle;
\draw[densely dashed, blue!15!red!80!black,very thick] (\tpdx,\tpdy) -- (\tpax,\tpay) -- (\tpbx,\tpby) -- (\tpcx,\tpcy);

\node[anchor=north west] at (\tbx,\tby){\textcolor{black}{$K$}} ;
\node[anchor=south west, yshift=0.3*\gridsize cm] at (\tpax,\tpay){\textcolor{black}{${S_K}$}} ;

\draw[very thick] (\xb,\xt) -- (\tpbx,\tpby);
\draw[very thick] (topRight) -- (\xt,\xb) ;
\node[anchor=south west] at (\xt,\xb){$\partial \Omega$} ;

\end{tikzpicture}
        \caption{}
        \label{fig:os-lin}
    \end{subfigure}
    \hspace{0.1\textwidth}
    \caption{Non-simplicial oversampling patches in 2D. The dilated edges of the patch~$S_K$, those that `correspond' to the edges of the original triangle~$K$, are dashed and drawn in red.}
    \label{fig:os-patch-functions}
\end{figure}
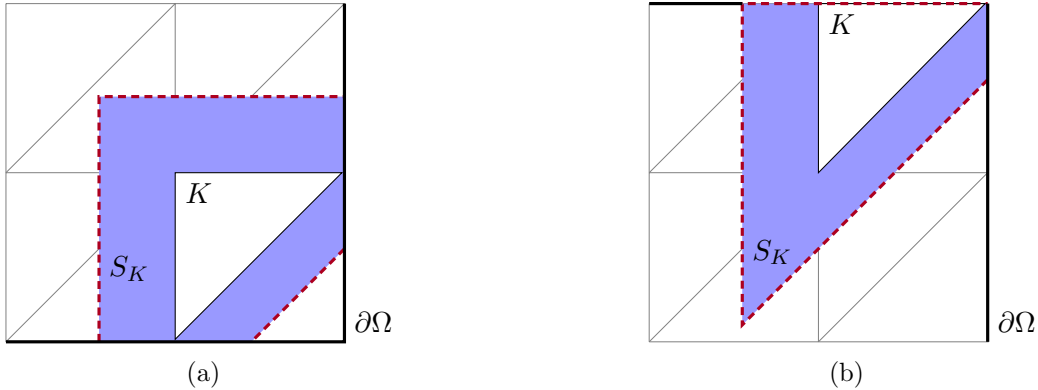

\begin{definition}
    \label{def:DOF-os-cr}
    Let $K\in\mesh$ and let~$S_K$ be its associated oversampling patch. We assume that~$S_K$ has $d+1$ dilated faces, and we denote them by $e_0,\dots,e_d$. We define the \DOF~operator~$\Gamma^{CR}$ by 
    \begin{equation*}
        \forall \, v \in \Pone(S_K),
        \qquad
        \Gamma^{CR}(S_K,v) = \left( \frac{1}{|e_0|} \int_{e_0} v,\dots,\frac{1}{|e_d|} \int_{e_d} v \right).
    \end{equation*}
\end{definition}

When~$S_K$ is a simplex, we have~$\mathcal{F}_d(S_K) = \mathcal{F}(S_K)$, and Def.~\ref{def:DOF-os-cr} coincides with the respective elements of Def.~\ref{def:DOF}.

\subsubsection{Numerical correctors: first oversampling strategy}
We now provide the precise assumptions under which we will consider local problems, i.e., the analogues of~\eqref{eq:MsFEM-basis} defining the MsFEM-lin basis functions and the definition of the numerical correctors in~\eqref{eq:diffusion-MsFEM-correctors}. In fact, since the numerical correctors play an essential role in the construction of non-intrusive MsFEM approaches, we define the numerical correctors first and use them to define the multiscale basis functions in Def.~\ref{def:msbasis-OS}.

We discuss two definitions of the numerical correctors, corresponding to two ways to define the oversampling technique for the MsFEM. 
The functional settings for these constructions are provided by Def.~\ref{def:sampling-space-OS-dofe} and~\ref{def:sampling-OS-DOFc}. These definitions involve a `sampling space', whose name is inspired by the idea that only a limited number of local problems will be solved to encode the microstructure of the PDE in the numerical model. The choice of sampling space has to accommodate for the boundary conditions that one wishes to impose on the numerical correctors and basis functions (e.g.~essential or natural; see Examples~\ref{ex:msfem-lin-dofe} and~\ref{ex:msfem-cr-dofe}).

\begin{definition}
    \label{def:sampling-space-OS-dofe}
	Let~$K\in\mesh$, let~$S_K$ be its associated oversampling patch and let~$\Gamma$ be a \DOF~operator from Def.~\ref{def:DOF}, \ref{def:DOF-os-lin} or~\ref{def:DOF-os-cr}. A subspace~$V_K$ of~$H^1(S_K)$ and bilinear form~$s^\varepsilon_K : V_K \times V_K \to \bbR$ are called \textbf{sampling space} and \textbf{sampling form}, respectively, if they satisfy the following:
	\begin{enumerate}
	\item the space $V_K$ contains the space of affine functions~$\Pone(S_K)$;
	\label{def:sampling-space-affine-OS}
	\item the operator~$\Gamma(S_K,\cdot)$ is well-defined on~$V_K$;
	\item the \textbf{\DOF-extended local problem}: find $v \in V_K$ such that 
	\begin{equation}
		\left\{
		\begin{IEEEeqnarraybox}[][c]{uts}
			\IEEEstrut
			$s_K^\varepsilon(v, w)$
			&$=$
			&$\langle g, w\rangle$ 
			for all $w \in V_{K,0}$,
			\\
			$\Gamma(S_K,v)$
			&$=$
			&$\text{given}$,
			\IEEEstrut
			\IEEEeqnarraynumspace 
		\end{IEEEeqnarraybox}
		\right.
		\label{eq:sampling-space-pde-OS-dofe}
	\end{equation}
	has a unique solution for any $ g \in (H^1(S_K))'$. Here, $
	V_{K,0} = \{ w \in V_K \mid \Gamma(S_K,w) = 0\}
	$ is the \textbf{sampling test space}. 
	\end{enumerate}
\end{definition}

Problem~\eqref{eq:sampling-space-pde-OS-dofe} is called `\DOF-extended' because the degrees of freedom, controlling the boundary conditions associated to the local problem, are imposed on the oversampling patch~$S_K$ rather than the (generally smaller) mesh element~$K$. 

The sampling form~$s_K^\varepsilon$ shall be used to encode the oscillations of the bilinear form~$a^\varepsilon$ and thus the microstructure of the problem in the multiscale finite element functions. 
There is some flexibility in choosing the sampling form; one may choose to include all the same terms as those in the bilinear form~$a^\varepsilon_K$ of the original problem~\eqref{eq:gen-pb}, or only some of them.
When the MsFEM was first proposed in~\cite{hou_multiscale_1997}, it was suggested that~$s_K^\varepsilon$ should include those terms that correspond to the highest-order terms of the PDE that is to be solved. In the context of the advection-diffusion equation, one may thus choose to include in our MsFEM framework only the diffusion terms, or both the diffusion and advection terms. Both options have been studied e.g.~in~\cite{le_bris_numerical_2017,le_bris_multiscale_2019}.

\medskip

In the functional setting of Def.~\ref{def:sampling-space-OS-dofe}, the generalization of~\eqref{eq:diffusion-MsFEM-correctors} to define the numerical correctors for the general MsFEM framework is as follows.

\begin{definition} 
\label{def:corr-dofe}
For all $K\in\mesh$, for any~$0\leq\alpha\leq d$, we introduce the function~$\VSK{\alpha}{1} \in V_{K,0}$ as the unique solution to the \textbf{corrector problem}
\begin{equation}
    \tcboxmath[colback=white, colframe=black]{
        \forall \, w \in V_{K,0},
        \quad
        s_K^\varepsilon \left( \VSK{\alpha}{1}, w \right) =
            \begin{cases}
                \IEEEstrut
                {-s_K^\varepsilon} \left( 1, w \right)
                & \text{if } \alpha=0,
                \\
                    {-s_K^\varepsilon} \left( x^\alpha - x^\alpha_{c,K}, w \right)
                    & \text{if } 1\leq\alpha\leq d.
            \end{cases}
    }
\label{eq:MsFEM-gen-correctors-dofe}
\end{equation}
The \textbf{\DOF-extended numerical corrector}~$\VKOS{\alpha}{1}$ is defined as the restriction of~$\VSK{\alpha}{1}$ to~$K$, extended to all of~$\Omega$ by~$0$.
\end{definition}

Note that the above definition introduces one more numerical corrector than introduced in~\eqref{eq:diffusion-MsFEM-correctors} (namely the corrector for~$\alpha=0$). The precise definition of the numerical correctors is chosen such that the analogous expansion of~\eqref{eq:diffusion-MsFEM-Vxy-grad} for the general framework (see~\eqref{eq:MsFEM-Vxy}) leads to a PDE for the multiscale basis functions analogous to~\eqref{eq:MsFEM-basis}; we show this in Lemma~\ref{lem:msbasis-OS-vf-dofe} and (for a second oversampling strategy introduced below) in Lemma~\ref{lem:msbasis-OS-vf-dofc}. In the following example, we see that Def.~\ref{def:corr-dofe} is indeed a generalization of the numerical correctors defined by~\eqref{eq:diffusion-MsFEM-correctors} in Sec.~\ref{sec:msfem-effective}.

\begin{example}[MsFEM-lin for diffusion problems]
    We consider $V_H = V_H^{L}$ and $\Gamma = \Gamma^L$ from Def.~\ref{def:DOF}.
    For the diffusion problem~\eqref{eq:diffusion-pde}, we have~$a^\varepsilon = a^{\varepsilon,\dif}$ and we set~$s_K^\varepsilon = a_K^{\varepsilon,\dif}$ (see Example~\ref{ex:diffusion-vf-gen}).
    The sampling space for the MsFEM-lin is defined as 
    \begin{equation*}
        V_K 
        =
        V_K^L
        \coloneqq
        \left\{
            v \in H^1(S_K) \ \mid \ \exists \, w \in \Pone(S_K) \text{ such that } v\vert_{\partial S_K} = w\vert_{\partial S_K}
        \right\}.
    \end{equation*}
    Then the sampling test space~$V^{L}_{K,0}$ is the space~$H^1_0(S_K)$. 
    In this case, it holds $a_K^{\varepsilon,\dif}(1,w)=0$ for all $w \in V^{L}_{K,0}$.
    Consequently, the \DOF-extended numerical corrector~$\VKOS{0}{1}$ is identically equal to~$0$; we obtain indeed exactly~$d$ numerical correctors as in Sec.~\ref{sec:msfem-effective}.
    For the non-trivial numerical correctors, Def.~\ref{def:corr-dofe} corresponds to the weak formulation of the following boundary value problem:
    \begin{equation}
        -{\operatorname{div}(A^\varepsilon \nabla \VSK{\alpha}{1})} = \operatorname{div}(A^\varepsilon e_\alpha) \
        \text{ in } S_K,
        \quad
        \VSK{\alpha}{1} = 0 \
        \text{ on } \partial S_K,
        \label{eq:msfem-lin-dofe}
    \end{equation}
    which is clearly well-posed.
    \label{ex:msfem-lin-dofe}
\end{example}

\begin{example}[MsFEM-CR for diffusion problems]
    Taking~$a^\varepsilon$, $a^\varepsilon_K$ and~$s_K^\varepsilon$ as in the previous example, we construct the MsFEM-CR with the sampling space~$V_K^{CR} \coloneqq H^1(S_K)$. 
    With $V_H = V_H^{CR}$ and $\Gamma = \Gamma^{CR}$ from Def.~\ref{def:DOF}, the corrector problem~\eqref{eq:MsFEM-gen-correctors-dofe} for~$\alpha=0$ reduces to~$\VKOS{0}{1}=0$, as in Example~\ref{ex:msfem-lin-dofe}.
    For $1 \leq \alpha \leq d$, the \DOF-extended numerical corrector~$\VKOS{\alpha}{1}$ is obtained from the boundary value problem:
    \begin{equation}
        \left\{
        \begin{IEEEeqnarraybox}[][c]{uts?s}
            \IEEEstrut
            $-{\operatorname{div}(A^\varepsilon \nabla \VSK{\alpha}{1})}$
            &$=$
            &$\operatorname{div}(A^\varepsilon e_\alpha)$ 
            &in $S_K$,
            \\
            $\vec{n} \cdot A^\varepsilon \nabla \VSK{\alpha}{1}$
            &$=$
            &$-{\vec{n} \cdot A^\varepsilon e_\alpha}$
            &on each $h \in \mathcal{F}_a(S_K)$, \\
            $\vec{n} \cdot A^\varepsilon \nabla \VSK{\alpha}{1}$
            &$=$
            &$c_h - \vec{n} \cdot A^\varepsilon e_\alpha $
            &on each $h \in \mathcal{F}_d(S_K)$, \\
            $\displaystyle \frac{1}{|h|} \int_h \VSK{\alpha}{1}$
            &$=$
            &$0$
            &for each $h \in \mathcal{F}_d(S_K)$,
            \IEEEstrut
            \IEEEeqnarraynumspace 
        \end{IEEEeqnarraybox}
        \right.
        \label{eq:msfem-cr-dofe}
    \end{equation}
    where~$\vec{n}$ denotes the outward unit vector on~$\partial S_K$ and $c_h$ is a constant whose value is uniquely determined by the above problem. 
    We note that the condition for the flux on the additional faces of~$S_K$ is entirely determined by the right-hand side in~\eqref{eq:MsFEM-gen-correctors-dofe}, whereas the flux on the dilated faces of~$S_K$ involves an additional constant, due to the fact that the test functions in~$V_{K,0}^{CR}$ cannot take arbitrary values on the dilated faces. Indeed, their mean vanishes on these faces according to Def.~\ref{def:sampling-space-OS-dofe}.
    
    When $S_K=K$ and when the faces of~$K$ do not lie on~$\partial \Omega$, this corresponds to the setting of the original MsFEM-CR defined in~\cite{le_bris_msfem_2013}. The latter work also provides an alternative characterization of the multiscale Crouzeix-Raviart space.

    When a face~$e$ of $K$ lies on~$\partial\Omega$, the basis functions that we will define below do not satisfy~$\phiEps{e}=0$ on~$e$, but only satisfy a weak boundary condition in the average sense on~$e$ (and so does the corresponding MsFEM approximation to~\eqref{eq:gen-pb} defined below). This does not correspond to the original definition of the MsFEM-CR in~\cite{le_bris_msfem_2013, le_bris_msfem_2014}. The MsFEM-CR with local boundary conditions as defined here was studied in~\cite{degond_crouzeix-raviart_2015, muljadi_nonconforming_2015, jankowiak_non-conforming_2018}.
    \label{ex:msfem-cr-dofe}
\end{example}

\begin{remark}
    In both Examples~\ref{ex:msfem-lin-dofe} and~\ref{ex:msfem-cr-dofe}, the numerical corrector~$\VKOS{0}{1}$ vanishes, because $s^\varepsilon_K(1,w)=0$ for all~$w$ in the sampling test space. This is no longer the case e.g.~for an MsFEM for advection-diffusion problems in which the sampling problem uses the skew-symmetrized bilinear form defined in~\eqref{eq:ex-skew-symmetric}. In this case, the numerical corrector~$\VKOS{0}{1}$ does not vanish. In the corresponding effective numerical scheme that we will derive in~\eqref{eq:gen-MsFEM-eff-form-Gal}, this leads to a term of order~0 even if such a term is not present in the advection-diffusion equation itself.
    \label{rem:VK0}
\end{remark}

When $S_k=K$ (i.e., in the absence of oversampling), the \DOF~operator allows us to prescribe certain continuity properties on the faces of the mesh elements~$K$. More precisely, when the MsFEM-lin with \DOF~operator~$\Gamma^L$ is employed, the numerical correctors~$\VKOS{\alpha}{1}$ vanish at the vertices of the mesh, and, with the correct choice of sampling space (see Example~\ref{ex:msfem-lin-dofe}), they vanish on all faces of~$K$ and are thus continuous on~$\Omega$. When the MsFEM-CR with \DOF~operator~$\Gamma^{CR}$ is considered, we obtain weak continuity of the numerical correctors over all faces of the mesh. The definition of the multiscale basis functions that we give below (see Def.~\ref{def:msbasis-OS}, in the vein of the expansion~\eqref{eq:diffusion-MsFEM-Vxy-grad}) shows that the continuity properties of the $\Pone$ basis functions of the underlying $\Pone$ space are not perturbed when building the multiscale basis functions.

In the general case, when the oversampling patch~$S_K$ is larger than~$K$, we cannot preserve any of these continuity properties if we use \DOF-extended local problems for our local computations, since the values on~$\partial K$ are not controlled by the degrees of freedom~$\Gamma(S_K,\cdot)$ on~$\partial S_K$. Therefore, we introduce another variant of the local problems to define \DOF-continuous numerical correctors in the next section.

\subsubsection{Numerical correctors: second oversampling strategy}
\begin{definition}
Let $K \in \mesh$ and let~$V_K$ and~$s_K^\varepsilon$ be a sampling space and sampling form, respectively, according to Def.~\ref{def:sampling-space-OS-dofe}. Additionally, suppose that the operator~$\Gamma(K,\cdot)$ is well-defined on~$V_K$. Then a \textbf{\DOF-continuous local problem} is to find $v\in V_K$ such that
\begin{equation}
	\left\{
	\begin{IEEEeqnarraybox}[][c]{uts}
		\IEEEstrut
		$s_K^\varepsilon(v, w)$
		&$=$
		&$\langle g, w\rangle$ 
		for all $w \in V_{K,0}$,
		\\
		$\Gamma(K,v)$
		&$=$
		&$\text{given}$,
		\IEEEstrut
		\IEEEeqnarraynumspace 
	\end{IEEEeqnarraybox}
	\right.
	\label{eq:sampling-space-pde-OS-dofc}
\end{equation}
for some $ g \in (H^1(S_K))'$.
\label{def:sampling-OS-DOFc}
\end{definition}

\begin{definition} 
\label{def:corr-dofc}
Suppose any \DOF-continuous local problem in Def.~\ref{def:sampling-OS-DOFc} is well-posed. Then we introduce, for all $K\in\mesh$ and all $0 \leq \alpha \leq d$, the functions~$\VSK{\alpha}{2}$ as the unique functions in $V_K$ with $\Gamma\left(K, \VSK{\alpha}{2}\right)=0$ satisfying the corrector problem~\eqref{eq:MsFEM-gen-correctors-dofe}. We define the \textbf{\DOF-continuous numerical correctors}~$\VKOS{\alpha}{2}$ as the restriction of~$\VSK{\alpha}{2}$ to~$K$, extended to all of~$\Omega$ by~$0$.
\end{definition}

We emphasize that the local problems of Def.~\ref{def:corr-dofe} and~\ref{def:corr-dofc} use test functions~$w$ in \emph{the same space}~$V_{K,0}$. This means that the test functions satisfy $\Gamma(S_K,w)=0$ rather than $\Gamma(K,w)=0$.
The difference between \DOF-extended and \DOF-continuous numerical correctors is that the former satisfy $\Gamma{\left(S_K,\VKOS{\alpha}{1}\right)}=0$, whereas the latter satisfy $\Gamma{\left(K,\VKOS{\alpha}{2}\right)}=0$.

\begin{remark}
    \label{rem:DOFec-noOS}
    Clearly, when~$S_K=K$, there is no difference between the \DOF-extended and \DOF-continuous problems~\eqref{eq:sampling-space-pde-OS-dofe} and~\eqref{eq:sampling-space-pde-OS-dofc}.
    We shall in this case simply refer~\eqref{eq:sampling-space-pde-OS-dofe} (or~\eqref{eq:sampling-space-pde-OS-dofc}) as local problems, and we write $\VKOS{\alpha}{0} = \VK{\alpha}$ for the numerical correctors of MsFEMs without oversampling.
\end{remark}

\begin{example}[MsFEM-lin for diffusion problems]
    Continuing Example~\ref{ex:msfem-lin-dofe}, consider now the \DOF-continuous numerical corrector~$\VKOS{\alpha}{2}$. Equation~\eqref{eq:sampling-space-pde-OS-dofc} solves the following problem for~$1 \leq \alpha \leq d$: there exists~$w \in \Pone(S_K)$ such that 
    \begin{equation*}
        -{\operatorname{div}(A^\varepsilon \nabla \VKOS{\alpha}{2})} = \operatorname{div}(A^\varepsilon e_\alpha) \
        \text{ in } S_K,
        \quad
        \VKOS{\alpha}{2} = w \
        \text{ on } \partial S_K,
        \quad 
        \VKOS{\alpha}{2} = 0 \
        \text{ at the vertices of } K.
    \end{equation*}
    The boundary condition on~$\partial S_K$ is complemented by a condition at the vertices of~$K$. Except when~$A^\varepsilon$ is constant (and a solution is~$\VKOS{\alpha}{2}=0$), it is not evident whether a solution to this problem exists. For $\alpha=0$, the numerical corrector~$\VKOS{0}{2}$ vanishes, as in the \DOF-extended case.
    \label{ex:msfem-lin-dofc}
\end{example}

\begin{example}[MsFEM-CR for diffusion problems]
    For the MsFEM-CR considered in Example~\ref{ex:msfem-cr-dofe}, the \DOF-continuous numerical correctors satisfy the same problem~\eqref{eq:msfem-cr-dofe} (for $1 \leq \alpha \leq d$) as the \DOF-extended numerical correctors, but with the average condition (the final equation in~\eqref{eq:msfem-cr-dofe}) replaced by $\displaystyle \frac{1}{|h|} \int_h \VKOS{\alpha}{2} = 0$ for each $h \in \mathcal{F}(K)$.
    As we saw for the MsFEM-lin in Example~\ref{ex:msfem-lin-dofc}, this is not a standard boundary value problem on~$S_K$.
    For the case~$\alpha=0$, we have $\VKOS{0}{2}=0$, which clearly satisfies the constraints $\displaystyle \frac{1}{|h|} \int_h \VKOS{0}{2} = 0$ for each $h \in \mathcal{F}(K)$.
    \label{ex:msfem-cr-dofc}
\end{example}

Examples~\ref{ex:msfem-lin-dofe} and~\ref{ex:msfem-cr-dofe} show that a \DOF-extended local problem is typically equivalent to a PDE with boundary conditions on~$S_K$. Under reasonable assumptions, these problems have a unique solution as required by Def.~\ref{def:sampling-space-OS-dofe}. We have seen in Examples~\ref{ex:msfem-lin-dofc} and~\ref{ex:msfem-cr-dofc} that this is not the case for \DOF-continuous problems, for which one finds some boundary conditions on~$ \partial S_K$ (because the degrees of freedom of test functions in~$V_{K,0}$ are prescribed on~$S_K$) and another set of conditions on~$\partial K$ that are explicitly imposed through the degrees of freedom on~$K$ in~\eqref{eq:sampling-space-pde-OS-dofc}. Well-posedness is not obvious in general, and cannot always be deduced from well-posedness of the \DOF-extended counterpart~\eqref{eq:sampling-space-pde-OS-dofe}. We address the well-posedness of \DOF-continuous problems in more detail in Sec.~\ref{sec:os-glue-correctors}. The advantage of \DOF-continuous oversampling is that it imposes certain continuity properties on the multiscale basis functions, and we will see in Sec.~\ref{sec:gen-num} that it yields better numerical approximations than \DOF-extended oversampling.

\subsubsection{Well-posedness of \DOF-continuous numerical correctors}
\label{sec:os-glue-correctors}
We have seen in Examples~\ref{ex:msfem-lin-dofc} and~\ref{ex:msfem-cr-dofc} that \DOF-continuous local problems lead to non-standard boundary conditions. This poses not only a theoretical issue, but also a computational challenge. To complete our study of the general MsFEM framework, we now present a computational strategy to obtain the \DOF-continuous numerical correctors, and we use this strategy to discuss the well-posedness of the associated local problems. 

In Def.~\ref{def:sampling-space-OS-dofe} we assume the well-posedness of \DOF-extended problems, and we have seen in Examples~\ref{ex:msfem-lin-dofe} and~\ref{ex:msfem-cr-dofe} that this is a natural assumption. It is also natural to assume that we can compute \DOF-extended numerical correctors numerically. We compute the \DOF-continuous numerical correctors from the \DOF-extended numerical correctors, by subtracting a linear combination of suitable functions~$W^\beta$ from the \DOF-extended numerical correctors. The~$W^\beta$ must all satisfy the homogeneous equation
$
	s_K^\varepsilon \left( W^\beta , w \right) = 0
$
for all
$
	w \in V_{K,0},
$
in order not to perturb the local problem~\eqref{eq:MsFEM-gen-correctors-dofe} that is already satisfied by both types of numerical correctors.
We shall use the functions $W^0 \coloneqq 1 + \VSK{0}{1}$ and $W^\beta \coloneqq x^\beta - x^\beta_{c,K} + \VSK{\beta}{1}$ for $1 \leq \beta \leq d$, where~$\VSK{\beta}{1}$ is defined in Def.~\ref{def:corr-dofe}. The precise strategy is as follows.

Fix~$0 \leq \alpha \leq d$. We look for coefficients~$c_0^\alpha, \dots, c_d^\alpha$ such that
$
	\displaystyle 
	\VSK{\alpha}{2} 
	=
	\VSK{\alpha}{1}
	- \sum_{\beta=0}^d c_\beta^\alpha \, W^\beta
$
on~$K$, where we recall that $\VSK{\alpha}{2}$ is defined by Def.~\ref{def:corr-dofc}. Note that both sides of the equation clearly satisfy~\eqref{eq:MsFEM-gen-correctors-dofe}. The desired equality thus holds if and only if
$
	\displaystyle
	\Gamma \left( K,
		\VSK{\alpha}{1} - \sum_{\beta=0}^d c_\beta^\alpha \, W^\beta
	\right)
	=0
$.
Since the \DOF~operators are linear, this leads to the linear system
\begin{equation}
	\underbrace{ 
		\begin{bmatrix}
			\vert & \vert & & \vert \\
			\Gamma(K,W^0) & \Gamma(K,W^1) & \hdots & \Gamma(K,W^d) \\
			\vert & \vert & & \vert \\
		\end{bmatrix}
	}_{
		\displaystyle
		\eqqcolon \mathds{M}
	}
	\begin{bmatrix}
		c_0^\alpha \\ c_1^\alpha \\ \vdots \\ c_d^\alpha
	\end{bmatrix}
	= 
	\Gamma\left(K,\VSK{\alpha}{1}\right).
\label{eq:glue-system}
\end{equation}
Invertibility of the matrix~$\mathds{M}$ is thus a sufficient condition for the existence of all \DOF-continuous numerical correctors, and the resolution of the linear system~\eqref{eq:glue-system} for each~$\alpha$ (where all \DOF-extended numerical correctors are replaced by their numerical approximation) allows to compute the \DOF-continuous numerical correctors numerically.

Before studying the invertibility of the matrix~$\mathds{M}$ in a few special cases, let us consider the matrix composed of the degrees of freedom on~$S_K$, i.e., the matrix
\begin{equation*}
	\widetilde{\mathds{M}} 
	\coloneqq 
	\begin{bmatrix}
		\vert & \vert & & \vert \\
		\Gamma(S_K,W^0) & \Gamma(S_K,W^1) & \hdots & \Gamma(S_K,W^d) \\
		\vert & \vert & & \vert \\
	\end{bmatrix}.
	\end{equation*}
By definition of the functions~$\VSK{\beta}{}$, we have $\Gamma(S_K,W^\beta) = \Gamma(S_K,x^\beta - x_{c,K}^\beta)$ for~$1\leq \beta \leq d$, and $\Gamma(S_K, W^{0}) = \Gamma(S_K,1)$. 
Note that the constant function together with the coordinate functions~$x^\beta-x^\beta_{c,K}$ ($1 \leq \beta \leq d$) span $\Pone(S_K)$. Since~$\Gamma(S_K,\cdot)$ is a bijection, the vectors $\Gamma(S_K, W^0),\dots,\Gamma(S_K,W^{d})$ are linearly independent. Hence the matrix~$\widetilde{\mathds{M}}$ is invertible.
One may hope that the linear independence of the vectors~$\Gamma(S_K,W^\beta)$ is preserved for the degrees of freedom on the interior boundary~$\partial K$ instead of~$\partial S_K$, yielding invertibility of~$\mathds{M}$. We found this to hold for all numerical tests that we performed, involving both the MsFEM-lin and the MsFEM-CR.

We can prove invertibility of~$\mathds{M}$ in a few special cases. When~$s_K^\varepsilon$ is the sampling form that was used in Example~\ref{ex:msfem-lin-dofe} (corresponding to a diffusion problem; we will consider this case until the end of this section) and if~$A^\varepsilon$ is constant, all numerical correctors vanish on~$S_K$ and the foregoing argument for the matrix~$\widetilde{\mathds{M}}$ shows invertibility of~$\mathds{M}$.

In the periodic setting (see Sec.~\ref{sec:homogenization}), even though~$A^\varepsilon$ itself is not constant, its homogenized limit~$A^\star$ is. In this case, the~$\VSK{\beta}{1}$ converge to zero weakly in~$H^1(S_K)$. (We show this in Lemma~\ref{lem:corr-to-0} in the absence of oversampling, but the argument can be generalized to \DOF-extended oversampling.) 
Now consider the MsFEM-CR. The weak convergence of the~$\VSK{\beta}{1}$ in~$H^1(S_K)$ ensures weak convergence on each face of~$K$ in the~$H^{1/2}$-norm by continuity of the trace operator. Since the embedding of $H^{1/2}(\partial K)$ in $L^2(\partial K)$ is compact, the~$\VSK{\beta}{1}$ converge to $0$ strongly in~$L^2$ on each face of~$K$. Consequently, the degrees of freedom~$\Gamma(K, \VSK{\beta}{1})$ (the averages on the faces of~$K$) converge to zero as $\varepsilon\to0$. Thus, $\Gamma(K,W^0) \to \Gamma(K,1)$ and $\Gamma(K,W^\beta) \to \Gamma(K,x^\beta - x_{c,K}^\beta)$ as $\varepsilon\to0$ for all $1 \leq \beta \leq d$ and, by the above argument for the matrix~$\widetilde{\mathds{M}}$, the matrix~$\mathds{M}$ is invertible in this limit. By continuity of the determinant function, the matrix~$\mathds{M}$ is invertible when~$\varepsilon$ is small enough, and the \DOF-continuous basis functions exist in this regime.

The study of the \DOF-continuous numerical correctors for the MsFEM-lin is more delicate, since pointwise operations are involved in evaluating the degrees of freedom, which are ill-defined on~$H^1(S_K)$. One can invoke the De Giorgi-Nash result, which can be found e.g.\ in~\cite[Theorem 8.22]{gilbarg_elliptic_2001}, to see that the multiscale basis functions, obtained from the numerical correctors in Def.~\ref{def:msbasis-OS} below, are in fact continuous for any bounded diffusion tensor. (See Example~\ref{ex:msfem-lin-basis} for a definition of the multiscale basis functions for the MsFEM-lin independent of the numerical correctors.) Pointwise evaluation is then justified. It would therefore be convenient to study the \DOF-continuous basis functions directly, without the intermediate step of the numerical correctors. We do not further pursue this topic here.

\subsection{The multiscale basis functions}
\label{sec:msbasis}
We can now define the multiscale basis functions for the approximation of the abstract problem~\eqref{eq:gen-pb} in terms of the numerical correctors. We recall that in Sec.~\ref{sec:msfem}, the numerical correctors were derived from the definition of the basis functions. We give an equivalent definition of the multiscale basis functions, independent of the numerical correctors, in Lemmas~\ref{lem:msbasis-OS-vf-dofe} and~\ref{lem:msbasis-OS-vf-dofc}. Recall that $\phiPone{1},\dots,\,\phiPone{N}$ is a basis of the space~$V_{H}$ (see Def.~\ref{def:underlying-basis}). We can suppose that the first $N_{0}$ basis functions form a basis of $V_{H,0}$. The following definition is the generalization of~\eqref{eq:diffusion-MsFEM-Vxy-grad} to the general MsFEM framework.

\begin{definition}
    For each $i=1,\dots,N$, the \textbf{multiscale basis function}~$\phiEps{i}$ is defined by
    \begin{equation}
        \tcboxmath[colback=white, colframe=black]{
            \forall \, K \in \mesh,
            \qquad
            \left. \phiEps{i} \right\vert_K
            =
            \left. \phiPone{i} \right\vert_K + \phiPone{i}(x_{c,K}) \, \VKOS{0}{0} + \sum_{\alpha=1}^d \partial_\alpha \left(\left. \phiPone{i} \right\vert_K \right) \VKOS{\alpha}{0},
        }
        \label{eq:MsFEM-Vxy}
    \end{equation}
    where $\bullet = \mathsf{e}$ corresponds to \DOF-extended multiscale basis functions and $\bullet = \mathsf{c}$ corresponds to \DOF-continuous multiscale basis functions.
\label{def:msbasis-OS}
\end{definition}

The \DOF-extended multiscale basis functions satisfy a variational problem on the oversampling patches~$S_K$ as shown by the following lemma.

\begin{lemma}
    Let~$K$ be any mesh element and let $1 \leq i \leq N$. Consider an MsFEM with \DOF-extended basis functions. Define an extension of~$\phiEps{i}$ from~$K$ to~$S_K$ by  
    \begin{equation}
        \widehat{\phiEps{i}}
        =
        \widehat{\left.\phiPone{i}\right\vert_K} + \phiPone{i} \left(x_{c,K} \right) \, \VSK{0}{1} + \sum_{\alpha=1}^d \partial_\alpha \left(\left. \phiPone{i} \right\vert_K \right) \VSK{\alpha}{1},
        \quad 
        \text{in } S_K,
        \label{eq:MsFEM-VxyOS-dofe}
    \end{equation}
    where $\widehat{ \left. \phiPone{i} \right\vert_K }$ denotes the affine extension of $\left. \phiPone{i} \right\vert_K$ to~$S_K$, and~$\VKOS{\alpha}{1}$ is as in Def.~\ref{def:corr-dofe}.
    Then $\widehat{\phiEps{i}}$ is the unique solution in~$V_K$ to
    \begin{equation}
        \left\{
        \begin{IEEEeqnarraybox}[][c]{uts}
            \IEEEstrut
            $s_K^\varepsilon \left(\widehat{\phiEps{i}}, w \right)$
            &$=$
            &$0$ 
            for all $w \in V_{K,0}$,
            \\
            $\Gamma \left( S_K, \widehat{\phiEps{i}} \right)$
            &$=$
            &$\Gamma \left( S_K , \widehat{\left.\phiPone{i}\right\vert_K} \right)$.
            \IEEEstrut
            \IEEEeqnarraynumspace 
        \end{IEEEeqnarraybox}
        \right.
        \label{eq:msbasis-OS-vf-dofe}
    \end{equation}
    \label{lem:msbasis-OS-vf-dofe}
\end{lemma}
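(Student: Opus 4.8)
The plan is to verify directly that the function $\widehat{\phiEps{i}}$ defined in~\eqref{eq:MsFEM-VxyOS-dofe} satisfies both lines of~\eqref{eq:msbasis-OS-vf-dofe}, and then to read off uniqueness from the standing well-posedness assumption of Def.~\ref{def:sampling-space-OS-dofe}. First I would check that $\widehat{\phiEps{i}}$ lies in the sampling space $V_K$: the affine part $\widehat{\left.\phiPone{i}\right\vert_K}$ belongs to $\Pone(S_K) \subseteq V_K$ since the sampling space contains the affine functions by Def.~\ref{def:sampling-space-OS-dofe}, while each corrector $\VSK{\alpha}{1}$ belongs to $V_{K,0} \subseteq V_K$ by Def.~\ref{def:corr-dofe}.

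The key algebraic step is to rewrite $\widehat{\phiEps{i}}$ in terms of the auxiliary functions $W^\beta$ introduced in Sec.~\ref{sec:os-glue-correctors}. Expanding the affine function $\widehat{\left.\phiPone{i}\right\vert_K}$ around the centroid $x_{c,K}$ via~\eqref{eq:P1-expansion} and regrouping, I would obtain
$$\widehat{\phiEps{i}} = \phiPone{i}(x_{c,K})\, W^0 + \sum_{\alpha=1}^d \partial_\alpha\left(\left.\phiPone{i}\right\vert_K\right) W^\alpha,$$
where $W^0 = 1 + \VSK{0}{1}$ and $W^\alpha = x^\alpha - x^\alpha_{c,K} + \VSK{\alpha}{1}$. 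The variational equation in~\eqref{eq:msbasis-OS-vf-dofe} then follows by linearity of $s_K^\varepsilon$ once I verify that each $W^\beta$ satisfies the homogeneous equation $s_K^\varepsilon(W^\beta, w) = 0$ for all $w \in V_{K,0}$; this is immediate from the corrector problem~\eqref{eq:MsFEM-gen-correctors-dofe}, in which the source term generated by the affine part is cancelled exactly by the corrector's right-hand side.

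For the boundary condition I would apply the linearity of $\Gamma(S_K,\cdot)$ directly to~\eqref{eq:MsFEM-VxyOS-dofe}: since each corrector $\VSK{\alpha}{1}$ belongs to $V_{K,0}$, which is by definition the kernel of $\Gamma(S_K,\cdot)$, every corrector contribution vanishes and $\Gamma(S_K,\widehat{\phiEps{i}}) = \Gamma(S_K,\widehat{\left.\phiPone{i}\right\vert_K})$, as required. Uniqueness is then not proved from scratch: problem~\eqref{eq:msbasis-OS-vf-dofe} is precisely a \DOF-extended local problem of the form~\eqref{eq:sampling-space-pde-OS-dofe} with source $g=0$ and prescribed degrees of freedom $\Gamma(S_K,\widehat{\left.\phiPone{i}\right\vert_K})$, so its unique solvability in $V_K$ is exactly the hypothesis in Def.~\ref{def:sampling-space-OS-dofe}.

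I do not anticipate any genuine obstacle: the statement is essentially a bookkeeping identity. The only point needing a little care is the regrouping that identifies $\widehat{\phiEps{i}}$ with the combination of the $W^\beta$, which relies on expanding $\widehat{\left.\phiPone{i}\right\vert_K}$ around exactly the centroid $x_{c,K}$ that also appears in the corrector problem~\eqref{eq:MsFEM-gen-correctors-dofe}; this alignment of the base point is what makes the cancellation in the sampling form exact.
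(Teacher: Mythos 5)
Your proposal is correct and matches the paper's own proof in essence: both verify the degree-of-freedom condition by linearity of $\Gamma(S_K,\cdot)$ together with the fact that the correctors $\VSK{\alpha}{1}$ lie in $V_{K,0}$, both obtain the variational equation from bilinearity of $s_K^\varepsilon$, the corrector problems~\eqref{eq:MsFEM-gen-correctors-dofe} and the centroid-based expansion~\eqref{eq:P1-expansion}, and both deduce uniqueness directly from the well-posedness assumed in Def.~\ref{def:sampling-space-OS-dofe}. Your regrouping through the functions $W^\beta$ is merely a notational repackaging of the same cancellation the paper carries out term by term.
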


In the case of the MsFEM-lin for the diffusion problem~\eqref{eq:diffusion-pde}, problem~\eqref{eq:msbasis-OS-vf-dofe} with $S_K=K$ coincides with the definition of the multiscale basis functions in~\eqref{eq:MsFEM-basis}; see Example~\ref{ex:msfem-lin-basis}.

\begin{proof}
    Problem~\eqref{eq:msbasis-OS-vf-dofe} has a unique solution in view of Def.~\ref{def:sampling-space-OS-dofe}. It thus suffices to show that~$\widehat{\phiEps{i}}$ satisfies~\eqref{eq:msbasis-OS-vf-dofe}.
    Since the numerical correctors~$\VKOS{\alpha}{1}$ belong to $V_{K,0}$ for all $0 \leq \alpha \leq d$, it is clear from~\eqref{eq:MsFEM-VxyOS-dofe} that $\Gamma\left(S_K,\widehat{\phiEps{i}}\right)=\Gamma \left( S_K , \widehat{\left.\phiPone{i}\right\vert_K} \right)$. 
    
    Inserting~\eqref{eq:MsFEM-VxyOS-dofe} into~\eqref{eq:msbasis-OS-vf-dofe} and applying~\eqref{eq:MsFEM-gen-correctors-dofe} to all~$\VKOS{\alpha}{1}$, we find, for any test function $w\in V_{K,0}$,
    \begin{align*}
        s_K^\varepsilon \left(\widehat{\phiEps{i}}, w \right)
        &=
        s_K^\varepsilon \left(\widehat{\left.\phiPone{i}\right\vert_K}, w \right)
        +
        \phiPone{i} \left( x_{c,K} \right) s_K^\varepsilon \left( \VSK{0}{1},w \right)
        + 
        \sum_{\alpha=1}^d 
        \left.\left(  \partial_\alpha \phiPone{i} \right) \right\vert_K s_K^\varepsilon \left(  \VSK{\alpha}{1},w\right)\\
        &=
        s_K^\varepsilon \left(\widehat{\left.\phiPone{i}\right\vert_K}, w \right)
        -
        \phiPone{i} \left( x_{c,K} \right) s_K^\varepsilon \left( 1,w \right) 
        - 
        \sum_{\alpha=1}^d 
        \left.\left( \partial_\alpha  \phiPone{i} \right) \right\vert_K s_K^\varepsilon \left( x^\alpha - x^\alpha_{c,K},w\right)\\
        &=
        s_K^\varepsilon \left(\widehat{\left.\phiPone{i}\right\vert_K}, w \right)
        -
        s_K^\varepsilon \left(  \phiPone{i} \left( x_{c,K} \right)  + \sum_{\alpha=1}^d \left.\left( \partial_\alpha \phiPone{i} \right) \right\vert_K \left( x^\alpha - x^\alpha_{c,K} \right), w \right).
    \end{align*}
    Here we use that~$s_K^\varepsilon$ is a bilinear form on~$V_K$, that all piecewise affine functions are contained in $V_K$ according to Def.~\ref{def:sampling-space-OS-dofe} (this ensures that $\widehat{\phiEps{i}}$ indeed lies in the domain of $s_K^\varepsilon$), and the property that $\nabla \phiPone{i}$ is piecewise constant. Finally, we use~\eqref{eq:P1-expansion} for $\varphi=\widehat{\phiPone{i}}$ to conclude that
    \begin{equation*}
        s_K^\varepsilon \left(\widehat{\phiEps{i}}, w \right)
        =
        s_K^\varepsilon \left(\widehat{\left.\phiPone{i}\right\vert_K},w \right)  
        -
        s_K^\varepsilon \left(\widehat{\left.\phiPone{i}\right\vert_K},w \right)  
        =
        0,
    \end{equation*}
    which establishes the desired variational formulation satisfied by~$\widehat{\phiEps{i}}$.
\end{proof}

If the \DOF-continuous problems~\eqref{eq:sampling-space-pde-OS-dofc} are well-posed, we obtain by the same arguments the following result for \DOF-continuous multiscale basis functions.

\begin{lemma}
    Let~$K$ be any mesh element and let $1 \leq i \leq N$. Assume that any \DOF-continuous local problem~\eqref{eq:sampling-space-pde-OS-dofc} is well-posed. Consider an MsFEM with \DOF-continuous basis functions. Define an extension of~$\phiEps{i}$ from~$K$ to~$S_K$ by  
    \begin{equation*}
        \widehat{\phiEps{i}}
        =
        \widehat{\left.\phiPone{i}\right\vert_K} + \phiPone{i} \left(x_{c,K} \right) \, \VSK{0}{2} + \sum_{\alpha=1}^d \partial_\alpha \left(\left. \phiPone{i} \right\vert_K \right) \VSK{\alpha}{2},
        \quad 
        \text{in } S_K,
    \end{equation*}
    where~$\VKOS{\alpha}{2}$ is as in Def.~\ref{def:corr-dofc}, and $\displaystyle \widehat{\left.\phiPone{i}\right\vert_K}$ is as defined in Lemma~\ref{lem:msbasis-OS-vf-dofe}.
    Then $\widehat{\phiEps{i}}$ is the unique solution in~$V_K$ to
    \begin{equation}
        \left\{
        \begin{IEEEeqnarraybox}[][c]{uts}
            \IEEEstrut
            $s_K^\varepsilon \left(\widehat{\phiEps{i}}, w \right)$
            &$=$
            &$0$ 
            for all $w \in V_{K,0}$,
            \\
            $\Gamma \left( K,\widehat{\phiEps{i}} \right)$
            &$=$
            &$\Gamma \left( K , \phiPone{i} \right)$.
            \IEEEstrut
            \IEEEeqnarraynumspace 
        \end{IEEEeqnarraybox}
        \right.
        \label{eq:msbasis-OS-vf-dofc}
    \end{equation}
    \label{lem:msbasis-OS-vf-dofc}
\end{lemma}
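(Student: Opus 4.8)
The plan is to follow the proof of Lemma~\ref{lem:msbasis-OS-vf-dofe} almost verbatim, exploiting the fact that the two families of correctors~$\VSK{\alpha}{1}$ and~$\VSK{\alpha}{2}$ solve the \emph{same} corrector problem~\eqref{eq:MsFEM-gen-correctors-dofe} and differ only through the imposed constraint, namely $\Gamma(S_K,\cdot)=0$ for the former versus $\Gamma(K,\cdot)=0$ for the latter. Three points must be checked: that~\eqref{eq:msbasis-OS-vf-dofc} has a unique solution in~$V_K$, that~$\widehat{\phiEps{i}}$ meets the degree-of-freedom condition $\Gamma(K,\widehat{\phiEps{i}})=\Gamma(K,\phiPone{i})$, and that it satisfies $s_K^\varepsilon(\widehat{\phiEps{i}},w)=0$ for every $w\in V_{K,0}$.

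Uniqueness is immediate: problem~\eqref{eq:msbasis-OS-vf-dofc} is precisely a \DOF-continuous local problem~\eqref{eq:sampling-space-pde-OS-dofc} with $g=0$ and prescribed data $\Gamma(K,\cdot)=\Gamma(K,\phiPone{i})$, and such problems are well-posed by the standing hypothesis of the lemma. It therefore remains to verify that~$\widehat{\phiEps{i}}$ satisfies the two stated conditions. For the degree-of-freedom condition, I would apply the linear operator~$\Gamma(K,\cdot)$ (well-defined on~$V_K$ by Def.~\ref{def:sampling-OS-DOFc}) to the expansion defining~$\widehat{\phiEps{i}}$. Since each \DOF-continuous corrector satisfies $\Gamma(K,\VSK{\alpha}{2})=0$ for $0\leq\alpha\leq d$ by Def.~\ref{def:corr-dofc}, all corrector contributions vanish and one is left with $\Gamma(K,\widehat{\phiEps{i}})=\Gamma(K,\widehat{\left.\phiPone{i}\right\vert_K})$. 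As the affine extension~$\widehat{\left.\phiPone{i}\right\vert_K}$ agrees with~$\phiPone{i}$ on~$K$, and $\Gamma(K,\cdot)$ only reads nodal values (MsFEM-lin) or face averages (MsFEM-CR) supported on~$K$, this equals $\Gamma(K,\phiPone{i})$, as desired.

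For the variational identity the computation is identical to the one in the proof of Lemma~\ref{lem:msbasis-OS-vf-dofe}, with~$\VSK{\alpha}{1}$ replaced by~$\VSK{\alpha}{2}$. Expanding~$\widehat{\phiEps{i}}$ by its definition and using bilinearity of~$s_K^\varepsilon$, I would invoke the corrector equation~\eqref{eq:MsFEM-gen-correctors-dofe}---satisfied by~$\VSK{\alpha}{2}$ for exactly the same right-hand sides as~$\VSK{\alpha}{1}$---to replace $s_K^\varepsilon(\VSK{0}{2},w)$ by $-s_K^\varepsilon(1,w)$ and each $s_K^\varepsilon(\VSK{\alpha}{2},w)$ by $-s_K^\varepsilon(x^\alpha-x^\alpha_{c,K},w)$. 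The constant and linear terms then reassemble, through the affine expansion~\eqref{eq:P1-expansion} applied to $\varphi=\widehat{\left.\phiPone{i}\right\vert_K}$, into~$\widehat{\left.\phiPone{i}\right\vert_K}$, which cancels the remaining affine contribution and leaves~$0$. This step is entirely insensitive to the extended/continuous distinction, which is exactly the content of the ``by the same analysis'' remark preceding the statement.

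The only genuinely new ingredient, and therefore the only potential obstacle, is the \emph{existence} part of well-posedness for the \DOF-continuous problem: unlike its \DOF-extended counterpart in Def.~\ref{def:sampling-space-OS-dofe}, it is not automatic, as Examples~\ref{ex:msfem-lin-dofc} and~\ref{ex:msfem-cr-dofc} illustrate. Here it is simply assumed as a hypothesis, so the obstacle is bypassed; the substantive question, which reduces to invertibility of the matrix~$\mathds{M}$ of~\eqref{eq:glue-system}, is settled separately in Sec.~\ref{sec:os-glue-correctors}. Consequently no new difficulty arises in the proof itself.
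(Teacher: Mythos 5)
Your proposal is correct and follows exactly the route the paper intends: the paper offers no separate proof of Lemma~\ref{lem:msbasis-OS-vf-dofc}, asserting only that it holds ``by the same analysis'' as Lemma~\ref{lem:msbasis-OS-vf-dofe}, and your write-up is precisely that adaptation --- uniqueness from the assumed well-posedness of the \DOF-continuous local problems~\eqref{eq:sampling-space-pde-OS-dofc}, the degree-of-freedom condition from $\Gamma\left(K,\VSK{\alpha}{2}\right)=0$ in Def.~\ref{def:corr-dofc} together with the fact that the affine extension agrees with~$\phiPone{i}$ on~$K$, and the variational identity via the corrector equations~\eqref{eq:MsFEM-gen-correctors-dofe} and the expansion~\eqref{eq:P1-expansion}. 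You also correctly identify that the only substantive new issue, existence for the \DOF-continuous problems, is a hypothesis of the lemma and is addressed separately in Sec.~\ref{sec:os-glue-correctors}, so there is no gap.
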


\begin{example}[MsFEM-lin for diffusion problems]
    In the setting of Example~\ref{ex:msfem-lin-dofe}, any \DOF-extended multiscale basis function~$\phiEps{i}$ for the MsFEM-lin constructed in~\eqref{eq:msbasis-OS-vf-dofe} is obtained, in each mesh element~$K$, as the restriction of a function~$\widehat{\phiEps{i}}$, which is the unique solution in~$H^1(S_K)$ to 
    \begin{equation*}
        -{\operatorname{div}(A^\varepsilon \nabla \widehat{\phiEps{i}})} = 0 \
        \text{ in } S_K,
        \quad
        \widehat{\phiEps{i}} = \widehat{\phiPone{i}} \
        \text{ on } \partial S_K.
    \end{equation*}
    For a \DOF-continuous basis function, $\widehat{\phiEps{i}}$ solves the same PDE in~$S_K$, is affine on~$\partial S_K$, and satisfies $\widehat{\phiEps{i}}(x_j) = \phiPone{i}(x_j)$ at all vertices~$x_j$ of~$K$. 
    \label{ex:msfem-lin-basis}
\end{example}

\begin{example}[MsFEM-CR for diffusion problems]
    In the continuation of Example~\ref{ex:msfem-cr-dofe}, the \DOF-extended multiscale basis function~$\phiEps{i}$ for the MsFEM-CR is the restriction to~$K$ of $\widehat{\phiEps{i}}$, the unique solution in~$H^1(S_K)$ to 
    \begin{equation*}
        \left\{
        \begin{IEEEeqnarraybox}[][c]{uts?s}
            \IEEEstrut
            $-{\operatorname{div}(A^\varepsilon \nabla \widehat{\phiEps{i}})}$
            &$=$
            &$0$ 
            &in $S_K$,
            \\
            $\vec{n} \cdot A^\varepsilon \nabla \widehat{\phiEps{i}}$
            &$=$
            &$0$
            &on each $h \in \mathcal{F}_a(S_K)$, \\
            $\vec{n} \cdot A^\varepsilon \nabla \widehat{\phiEps{i}}$
            &$=$
            &$c_h$
            &on each $h \in \mathcal{F}_d(S_K)$, \\
            $\displaystyle \frac{1}{|h|} \int_h \widehat{\phiEps{i}}$
            &$=$
            &$\displaystyle \frac{1}{|h|} \int_h \widehat{\phiPone{i}}$
            &for each $h \in \mathcal{F}_d(S_K)$,
            \IEEEstrut
            \IEEEeqnarraynumspace 
        \end{IEEEeqnarraybox}
        \right.
    \end{equation*}
    where the constants~$c_h$ are uniquely determined by the problem. We recall that the sets of faces~$\mathcal{F}_a(S_K)$ and~$\mathcal{F}_d(S_K)$ are defined in Sec.~\ref{sec:dof-ospatch}. For \DOF-continuous basis functions, the last condition is applied to the faces $h \in \mathcal{F}(K)$ (and all other conditions remain unchanged).
    \label{ex:msfem-cr-basis}
\end{example}

Our general framework allows two characterizations  of the multiscale basis functions, namely~\eqref{eq:MsFEM-Vxy} and~\eqref{eq:msbasis-OS-vf-dofe} or~\eqref{eq:msbasis-OS-vf-dofc}, as was the case for the MsFEM studied in Sec.~\ref{sec:msfem} (where $\phiEps{i}$ is given by~\eqref{eq:MsFEM-basis} or~\eqref{eq:diffusion-MsFEM-Vxy-grad}). The essential advantage of~\eqref{eq:MsFEM-Vxy} is that the microscale is fully encoded in the numerical correctors~$\VKOS{\alpha}{0}$, that can be computed element per element without any global information. In particular, the \emph{global} index~$i$ of the multiscale basis function~$\phiEps{i}$ is irrelevant for the computation of the numerical correctors. The expression in~\eqref{eq:MsFEM-Vxy} is therefore the crucial relationship that we will employ to develop non-intrusive MsFEMs within the general framework in Sec.~\ref{sec:noni-gen}, just as was~\eqref{eq:diffusion-MsFEM-Vxy-grad} in Sec.~\ref{sec:msfem-effective} and~\ref{sec:msfem-diffusion-nonin}.

The second formulation of the multiscale basis functions, as solutions to the local problems~\eqref{eq:msbasis-OS-vf-dofe} or~\eqref{eq:msbasis-OS-vf-dofc}, provides a more direct interpretation of the multiscale basis functions in terms of the sampling form chosen. It also gives a relation between the degrees of freedom of the $\Pone$ basis functions and the associated multiscale basis function. This is useful in particular for the well-posedness of the MsFEM, that we study in Lemma~\ref{lem:MsFEM-OS-well-posedness}.

\begin{remark}
    Our definition of the multiscale basis functions in~\eqref{eq:MsFEM-Vxy} is reminiscent of the Variational Multiscale Method, a framework developed in~\cite{hughes_multiscale_1995, hughes_variational_1998} to adapt Galerkin approximations on low-dimensional spaces to the presence of multiscale features. In this context, our formulation of the MsFEM also exhibits a link with residual-free bubbles, see e.g.~\cite{brezzi_choosing_1994, brezzi_bint_1997, hughes_variational_1998}.    
\end{remark}

\begin{remark}
\label{rem:OS-literature}
The first introduction of the MsFEM in~\cite{hou_multiscale_1997} corresponds to the idea of oversampling with \DOF-continuous basis functions. Although their existence cannot be established in general, they are computed numerically by taking linear combinations of \DOF-extended basis functions (following an analogous strategy to the one we discussed in Sec.~\ref{sec:os-glue-correctors}). The MsFEM with \DOF-extended basis functions is studied in the works~\cite{efendiev_convergence_2000,hou_removing_2004} dealing with the convergence analysis of the MsFEM-lin with oversampling.

Let us also note that the combination of Crouzeix-Raviart MsFEM and oversampling has, to the best of our knowledge, not yet been proposed in the literature. This method, for which the basis functions are given explicitly in Example~\ref{ex:msfem-cr-basis}, is a natural by-product of the identification of the abstract MsFEM framework.
\end{remark}

\subsection{The global problem}

We can now define the multiscale trial and test spaces, respectively~$V_H^\varepsilon$ and~$V_{H,0}^\varepsilon$, as follows:
\begin{equation*}
    V_H^\varepsilon = \left\{
        \phiEps{i} \ \mid \ 1 \leq i \leq N
    \right\},
    \qquad
    V_{H,0}^\varepsilon = \left\{
        \phiEps{i} \ \mid \ 1 \leq i \leq N_0
    \right\}.
\end{equation*}
We recall that we have assumed the first~$N_0$ basis functions of~$V_H$ to form a basis of~$V_{H,0}$ in Sec.~\ref{sec:msbasis}.
Note that we only use~$V_{H,0}^\varepsilon$ in the present section, because~\eqref{eq:gen-pb} is posed with homogeneous Dirichlet boundary conditions, but that the larger space~$V_H^\varepsilon$ is useful for more general boundary conditions (see Sec.~\ref{sec:noni-gen-more}). Applying~\eqref{eq:MsFEM-Vxy}, we have the equivalent characterization in terms of the $\Pone$ space~$V_H$,
\begin{equation*}
	V_H^\varepsilon
	=
	\left\{
		\left.
		v_H^\varepsilon
		=
		v_H + 
		\sum_{K\in\mesh} \left(
			v_H(x_{c,K}) \, \VKOS{0}{0}
			+ \sum_{\alpha=1}^d
				\partial_\alpha \left( \left. v_H \right\vert_K \right)
				\VKOS{\alpha}{0}
		\right)
		\,\right\vert\,
		v_H \in V_H
	\right\}.
\end{equation*}

\begin{definition}
\label{def:gen-MsFEM}
Let~$V_H$ be an underlying $\Pone$ space defined in Def.~\ref{def:underlying-space} with the associated \DOF~operator~$\Gamma$ from Def.~\ref{def:DOF} and Def.~\ref{def:DOF-os-lin}-\ref{def:DOF-os-cr}. Define for each mesh element~$K \in \mesh$ an oversampling patch (Def.~\ref{def:ospatch}), a sampling space and sampling form in accordance with Def.~\ref{def:sampling-space-OS-dofe}. Let the multiscale basis functions~$\phiEps{i}$ be given as in Def.~\ref{def:msbasis-OS}. Then a \textbf{Multiscale Finite Element Method} (MsFEM) for problem~\eqref{eq:gen-pb} is: find~$u^\varepsilon_H \in V_{H,0}^\varepsilon$ such that
\begin{equation}
    \forall \, v_H^\varepsilon \in V_{H,0}^\varepsilon,
    \qquad
    \sum_{K\in\mesh} a_K^{\varepsilon} \left( u^\varepsilon_H,v_H^\varepsilon \right) = F \left( v_H^\varepsilon \right).
    \label{eq:gen-MsFEM-OS}
\end{equation}
\end{definition}

In the following lemma, we investigate the well-posedness of the MsFEM.

\begin{lemma}
\label{lem:MsFEM-OS-well-posedness}
Consider an MsFEM without oversampling, or an MsFEM with oversampling using \DOF-continuous basis functions (assuming the associated basis functions are well-defined). When~$a^\varepsilon$ satisfies~\eqref{eq:gen-pb-coer}, the MsFEM~\eqref{eq:gen-MsFEM-OS} has a unique solution.
\end{lemma}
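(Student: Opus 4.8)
The plan is to treat~\eqref{eq:gen-MsFEM-OS} as a finite-dimensional square linear system and reduce well-posedness to injectivity, the only genuine difficulty being that $V_{H,0}^\varepsilon$ may be nonconforming. Since $V_{H,0}^\varepsilon$ is finite-dimensional and the trial and test spaces coincide, the Galerkin operator is represented by a square matrix, so existence and uniqueness both follow as soon as the homogeneous problem has only the trivial solution. Thus I would take $u_H^\varepsilon\in V_{H,0}^\varepsilon$ with $\sum_{K\in\mathcal{T}_H}a_K^\varepsilon(u_H^\varepsilon,v_H^\varepsilon)=0$ for all $v_H^\varepsilon\in V_{H,0}^\varepsilon$ and aim to prove $u_H^\varepsilon=0$.

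First I would test against $v_H^\varepsilon=u_H^\varepsilon$ and use the elementwise coercivity~\eqref{eq:gen-pb-coer}:
\[
0=\sum_{K\in\mathcal{T}_H}a_K^\varepsilon\!\left(u_H^\varepsilon,u_H^\varepsilon\right)\ge\sum_{K\in\mathcal{T}_H}\alpha_K\,\lVert\nabla u_H^\varepsilon\rVert_{L^2(K)}^2\ge0 .
\]
Hence every summand vanishes, so $\nabla u_H^\varepsilon=0$ on each $K$, i.e.\ $u_H^\varepsilon$ is piecewise constant, say $u_H^\varepsilon|_K=c_K$ for each $K\in\mathcal{T}_H$.

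Next I would transport this piecewise constancy to the underlying $\Pone$ function. Writing $u_H^\varepsilon=\sum_{i=1}^{N_0}U_i\,\phiEps{i}$ and setting $u_H=\sum_{i=1}^{N_0}U_i\,\phiPone{i}\in V_{H,0}$, I would invoke the degree-of-freedom matching of Lemmas~\ref{lem:msbasis-OS-vf-dofe} and~\ref{lem:msbasis-OS-vf-dofc}: for an MsFEM without oversampling one has $S_K=K$ (Rem.~\ref{rem:DOFec-noOS}) and Lemma~\ref{lem:msbasis-OS-vf-dofe} gives $\Gamma(K,\phiEps{i}|_K)=\Gamma(K,\phiPone{i}|_K)$, while for DOF-continuous oversampling Lemma~\ref{lem:msbasis-OS-vf-dofc} yields the same identity upon restriction to~$K$ (the operator $\Gamma(K,\cdot)$ only sees the values on~$K$). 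By linearity, $\Gamma(K,u_H^\varepsilon|_K)=\Gamma(K,u_H|_K)$ for every~$K$. Since $u_H^\varepsilon|_K=c_K$ is affine and $\Gamma(K,\cdot)$ is a bijection from $\Pone(K)$ onto $\bbR^{d+1}$, I would conclude $u_H|_K=c_K$, so that $u_H$ is itself piecewise constant. The property recorded after Def.~\ref{def:underlying-space} then forces $u_H$ to be globally constant, $u_H\equiv c$ on $\Omega$; and since $u_H\in V_{H,0}$ all boundary-associated degrees of freedom vanish. For a constant these equal~$c$ (a boundary nodal value for $\Gamma^L$, a boundary face average for $\Gamma^{CR}$), and as $\partial\Omega\neq\emptyset$ at least one such degree of freedom is present, whence $c=0$, $u_H=0$, and finally $c_K=u_H|_K=0$ for all $K$, i.e.\ $u_H^\varepsilon=0$.

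The hard part is precisely this last transfer step, and it is the reason~\eqref{eq:gen-pb-coer} is imposed elementwise rather than merely as coercivity of $a^\varepsilon$ on $H^1_0(\Omega)$. Because $V_{H,0}^\varepsilon$ need not be a subspace of $H^1_0(\Omega)$ (for the MsFEM-CR, or under oversampling), the vanishing of the broken gradient does not by itself give $u_H^\varepsilon=0$: one must exclude spurious piecewise-constant modes. The degree-of-freedom matching of Lemmas~\ref{lem:msbasis-OS-vf-dofe} and~\ref{lem:msbasis-OS-vf-dofc}, together with the weak inter-element continuity encoded in $V_H$ and the homogeneous boundary degrees of freedom of $V_{H,0}$, is exactly what rules these out.
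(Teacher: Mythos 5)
Your proof is correct and follows essentially the same route as the paper's: reduce to injectivity of the square system, use the elementwise coercivity~\eqref{eq:gen-pb-coer} to show $u_H^\varepsilon$ is piecewise constant, transfer to the underlying $\Pone$ function via the degree-of-freedom matching $\Gamma(K,u_H^\varepsilon)=\Gamma(K,u_H)$ and the bijectivity of $\Gamma(K,\cdot)$ on $\Pone(K)$, then conclude from the constancy property of $V_H$ and the vanishing boundary degrees of freedom. Your write-up is if anything slightly more explicit than the paper's (e.g., spelling out why the global constant must be zero), so there is nothing to correct.
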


\begin{proof}
Note that, with \DOF-continuous oversampling, but also without oversampling, the multiscale basis functions satisfy~\eqref{eq:msbasis-OS-vf-dofc}. In particular, all degrees of freedom of $u_H^\varepsilon$ related to the boundary vanish. Also note that, the dimension of~$V_{H,0}^\varepsilon$ being finite, it suffices to show that $u^\varepsilon_H=0$ is the unique solution to problem~\eqref{eq:gen-MsFEM-OS} with~$F=0$. 

If $0 = F(u^\varepsilon_H) = a^\varepsilon(u_H^\varepsilon, u_H^\varepsilon)$, it follows from~\eqref{eq:gen-pb-coer} that $u^\varepsilon_H$ is piecewise constant.
Let us write $\displaystyle u_H^\varepsilon = \sum_{i=1}^{N_{0}} \alpha_i \, \phiEps{i}$ for some coefficients~$\alpha_i\in\bbR$ and introduce the function $ \displaystyle u_H = \sum_{i=1}^{N_{0}} \alpha_i \, \phiPone{i} \in V_H$. Because of~\eqref{eq:msbasis-OS-vf-dofc}, we have $\Gamma(K,u_H^\varepsilon) = \Gamma(K,u_H)$ for all mesh elements~$K$. Since~$u_H^\varepsilon$ is piecewise constant and~$\Gamma(K,\cdot)$ is a bijection from~$\Pone(K)$ to~$\bbR^{d+1}$ (recall Def.~\ref{def:DOF}), it follows that $u_H^\varepsilon=u_H$. In particular, the multiscale function~$u_H^\varepsilon$ in fact belongs to the underlying $\Pone$ space~$V_H$.

We remarked immediately below Def.~\ref{def:underlying-space} that, for either of the two spaces~$V_H=V_H^{L}$ or~$V_H^{CR}$, the above implies that~$u^\varepsilon_H$ is constant throughout~$\Omega$. Since the degrees of freedom of~$u^\varepsilon_H$ associated to the boundary vanish, we readily deduce that~$u^\varepsilon_H=0$.
\end{proof}

We do not know of the existence of a result on the well-posedness of MsFEMs with oversampling using \DOF-extended multiscale basis functions. In~\cite{hou_removing_2004}, the authors establish an inf-sup result for a variant of the MsFEM-lin-OS with $\Pone$ test functions (see also Def.~\ref{def:gen-MsFEM-OS-testP1}). This result is obtained for a periodic diffusion coefficient in the limit of sufficiently small~$\varepsilon$.
\section{Non-intrusive MsFEM for the general framework}
	\label{sec:noni-gen}


We show in this section how to develop a non-intrusive approach for the general MsFEM framework of Sec.~\ref{sec:gen-framework}. We have seen in Lemma~\ref{lem:IBP-bubbles} that, for a particular MsFEM variant, the non-intrusive Galerkin MsFEM approach coincides with a Petrov-Galerkin MsFEM. This does not hold for all MsFEMs in the general framework. We first develop a non-intrusive MsFEM approach for a Petrov-Galerkin MsFEM in the general framework. We show that the non-intrusive approach for the Petrov-Galerkin MsFEM is actually equivalent to the Petrov-Galerkin MsFEM itself. In a second step, we introduce a non-intrusive approximation of the Galerkin MsFEM. Before doing so, let us  summarize the main steps of Sec.~\ref{sec:msfem-effective} and~\ref{sec:msfem-diffusion-nonin} to obtain a non-intrusive MsFEM approach:
\begin{enumerate}[label=(\arabic*)]
	\item 
	the expansion~\eqref{eq:diffusion-MsFEM-Vxy-grad} allows to recast the matrix~$\mathds{A}^\varepsilon$ of the linear system for the MsFEM as the matrix~$\mathds{A}^\Pone$ associated to the~$\Pone$ discretization of an effective problem;
	\label{proc:noni-matrix}
	\item 
	we approximate the right-hand side~$\mathds{F}^\varepsilon$ of the MsFEM problem by the right-hand side~$\mathds{F}^\Pone$ of this~$\Pone$ discretization;
	\label{proc:noni-rhs}
	\item
	the post-processing step~\eqref{eq:msfem-diff-noni-post} applied to the~$\Pone$ approximation of the effective problem yields the MsFEM approximation.
\end{enumerate}

\subsection{The Petrov-Galerkin MsFEM}
\label{sec:noni-gen-pg}

We recall that the abstract continuous problem for which we developed the MsFEM in Sec.~\ref{sec:gen-framework} is given by~\eqref{eq:gen-pb} and that it can be rewritten in terms of the bilinear forms~$a_K^\varepsilon$ satisfying~\eqref{eq:gen-pb-coer}. Petrov-Galerkin variants of the multiscale finite element method with $\Pone$ test functions were previously studied in~\cite{hou_removing_2004,hesthaven_high-order_2014}. In our general MsFEM framework, the adaptation of Def.~\ref{def:gen-MsFEM} to a Petrov-Galerkin MsFEM is the following.

\begin{definition}
\label{def:gen-MsFEM-OS-testP1}
Let~$V_H$ be an underlying $\Pone$ space defined in Def.~\ref{def:underlying-space} with the associated \DOF~operator~$\Gamma$ from Def.~\ref{def:DOF} and Def.~\ref{def:DOF-os-lin}-\ref{def:DOF-os-cr}. Define for each mesh element~$K \in \mesh$ an oversampling patch (Def.~\ref{def:ospatch}), a sampling space and sampling form in accordance with Def.~\ref{def:sampling-space-OS-dofe}. Let the multiscale basis functions~$\phiEps{i}$ be given as in Def.~\ref{def:msbasis-OS}. Then a \textbf{Petrov-Galerkin Multiscale Finite Element Method} (PG-MsFEM) for problem~\eqref{eq:gen-pb} is: find~$u^\varepsilon_H \in V_{H,0}^\varepsilon$ such that
\begin{equation}
    \forall \, v_H \in V_{H,0},
    \qquad
    \sum_{K\in\mesh} a_K^{\varepsilon}{\left( u^\varepsilon_H, v_H \right)} = F{\left( v_H \right)}.
    \label{eq:gen-MsFEM-OS-testP1}
\end{equation}
\end{definition}

When confusion may arise, we shall refer to the MsFEM defined in Def.~\ref{def:gen-MsFEM} as the Galerkin MsFEM (G-MsFEM). To study well-posedness of the PG-MsFEM, it is most convenient to relate this method to the G-MsFEM. Therefore, we postpone well-posedness of~\eqref{eq:gen-MsFEM-OS-testP1} to Lemma~\ref{lem:IBP-bubbles-gen}.

We now execute step~\ref{proc:noni-matrix} of the summary of the non-intrusive MsFEM approach at the beginning of this section. The matrix~$\mathds{A}^\varepsilon$ of the linear system associated to~\eqref{eq:gen-MsFEM-OS-testP1} is defined by
\begin{equation}
	\mathds{A}^\varepsilon_{j,i}
	=
	\sum_{K\in\mesh} a^\varepsilon_K{ \left(\phiEps{i}, \, \phiPone{j} \right)},
	\quad
	1 \leq i,j \leq N_{0}.
	\label{eq:system-msfem-gen-testP1}
\end{equation}
To find an effective $\Pone$ formulation with the same linear system, we will use the definition~\eqref{eq:MsFEM-Vxy} of the multiscale basis functions in the general framework, but first we combine it with~\eqref{eq:P1-expansion} applied to~$\varphi=\phiPone{i}$ to rewrite~\eqref{eq:MsFEM-Vxy} as 
\begin{align}
    \left. \phiEps{i} \right\vert_K 
    &= 
    \phiPone{i}(x_{c,K}) + \phiPone{i} \left( x_{c,K} \right) \VKOS{0}{0} + \sum_{\alpha=1}^d \left. \partial_\alpha \phiPone{i} \right\vert_K \left( x^\alpha - x^\alpha_{c,K} + \VKOS{\alpha}{0} \right)
    \nonumber\\
    &=
    \phiPone{i} \left( x_{c,K} \right) \, \VVK{0} + \sum_{\alpha=1}^d \left. \partial_\alpha \phiPone{i} \right\vert_K \VVK{\alpha},
    \label{eq:MsFEM-VVxy}
\end{align}
where 
\begin{equation}
	\VVK{0} \coloneqq 1+\VKOS{0}{0},
	\quad 
	\VVK{\alpha} \coloneqq x^\alpha-x^\alpha_{c,K} + \VKOS{\alpha}{0},
	\label{eq:VVK-def}
\end{equation}
for all $1 \leq \alpha \leq d$ and each~$K \in \mesh$. We recall that $\bullet \in \{\mathsf{e},\mathsf{c}\}$ indicates the choice of \DOF-extended or \DOF-continuous basis functions.
Inserting~\eqref{eq:MsFEM-VVxy} into~\eqref{eq:system-msfem-gen-testP1} for~$\phiEps{i}$ and~\eqref{eq:P1-expansion} for~$\varphi=\phiPone{j}$ yields
\begin{align*}
	\mathds{A}^\varepsilon_{j,i}
	&=
	\sum_{K\in\mesh} \left(
		\phiPone{i}\left(x_{c,K}\right) \, a_K^\varepsilon{\left(\VVK{0},1\right) \, \phiPone{j}\left(x_{c,K}\right)}
		+ \sum_{\alpha=1}^d \left.\left(\partial_\alpha\phiPone{i}\right)\right\vert_K \, a_K^\varepsilon {\left(\VVK{\alpha},1\right)} \, \phiPone{j}\left(x_{c,K}\right) 
		\right.
		\nonumber\\
		&\hspace{1.5cm}+ 
		\sum_{\beta=1}^d \phiPone{i}\left(x_{c,K}\right) \, a_K^\varepsilon{\left(\VVK{0},x^\beta-x_{c,K}^\beta\right)} \, \left.\left(\partial_\beta\phiPone{j}\right)\right\vert_K
		\nonumber\\
		&\hspace{1.5cm}+ 
		\left. \sum_{\alpha,\beta=1}^d
			\left.\left(\partial_\alpha\phiPone{i}\right)\right\vert_K \, a^\varepsilon_K{\left(\VVK{\alpha},x^\beta-x_{c,K}^\beta\right)} \, \left.\left(\partial_\beta\phiPone{j}\right)\right\vert_K
	\right),
\end{align*}
and therefore,
\begin{tcolorbox}[ams equation]
	\mathds{A}^\varepsilon_{j,i}
	=
	\mathlarger{\mathlarger{\sum}}_{K\in\mesh}
	|K| \, \left( \overline{M} \phiPone{i} \, \phiPone{j} \right)(x_{c,K})
	+ \int_K \phiPone{j} \, \overline{B}^1 \cdot \nabla \phiPone{i}
	+ \phiPone{i} \, \overline{B}^2 \cdot \nabla \phiPone{j}
	+ \nabla \phiPone{j} \cdot \overline{A} \, \nabla \phiPone{i},
	\label{eq:system-msfem-gen-decoupling}
\end{tcolorbox}
\noindent where we have defined the effective mass~$\overline{M}$, (adjoint) advection vector~$\overline{B}^1$ and~$\overline{B}^2$, and the effective diffusion tensor~$\overline{A}$, for all $1 \leq \alpha, \beta \leq d$ and for each $K\in\mesh$, as
\begin{equation}
    \begin{IEEEeqnarraybox}[][c]{us?us}
        \IEEEstrut
        $\left. \overline{M} \right\vert_K $
        &$\displaystyle = \frac{1}{|K|} \, a_K^\varepsilon{\left(\VVK{0},1\right)}$,
        &$\left. \overline{B}^1_\alpha \right\vert_K$ 
        &$\displaystyle = \frac{1}{|K|} \, a_K^\varepsilon{\left(\VVK{\alpha},1\right)}$,
        \\
        $\left. \overline{B}^2_\beta \right\vert_K$
        &$\displaystyle = \frac{1}{|K|} \, a_K^\varepsilon{\left(\VVK{0},x^\beta-x_{c,K}^\beta\right)}$,
        &$\left. \overline{A}_{\beta,\alpha} \right\vert_K $
        &$\displaystyle = \frac{1}{|K|} \, a^\varepsilon_K{ \left(\VVK{\alpha}, x^\beta-x_{c,K}^\beta\right)}$.
        \IEEEstrut
        \IEEEeqnarraynumspace 
    \end{IEEEeqnarraybox}
	\label{eq:gen-MsFEM-eff-coef}
\end{equation}
Note that~$\overline{M}$, $\overline{B}^1$, $\overline{B}^2$ and~$\overline{A}$ are all piecewise constant quantities.
All integrals in~\eqref{eq:system-msfem-gen-decoupling} can be computed exactly by evaluating the integrand at the centroid. With this quadrature rule, we observe that the term $|K| \left( \overline{M} \phiPone{i} \, \phiPone{j} \right)(x_{c,K})$ also equals the numerical approximation of the integral $\displaystyle \int_K \overline{M} \phiPone{i} \phiPone{j}$.

The new expression~\eqref{eq:system-msfem-gen-decoupling} for the matrix of the linear system motivates us to introduce the effective bilinear forms~$\overline{a}_K$ defined on~$H^1(K) \times H^1(K)$ by
\begin{equation}
\label{eq:gen-MsFEM-eff-form}
	\overline{a}_K(u,v) 
	=
	\int_K \nabla v \cdot \overline{A} \, \nabla u
		+ v \left(\overline{B}^1 \cdot \nabla u\right)
		+ u \left(\overline{B}^2 \cdot \nabla v\right)
		+ \overline{M} \, u \, v,
	\quad
	\text{for all } u,\,v \in H^1(K),
\end{equation}
and the associated $\Pone$ Galerkin approximation on the space~$V_{H,0}$:
\begin{equation}
	\text{Find } u_H\in V_{H,0} \text{ such that}
	\sum_{K\in\mesh} 
		\overline{a}_K(u_H,v_H)
	=
	F(v_H)
	\quad \text{for all } v_H \in V_{H,0}.
\label{eq:gen-FEM-effective}
\end{equation}
This discrete problem leads to a linear system with the matrix
\begin{equation*}
	\mathds{A}^\Pone_{j,i}
	=
	\overline{a} \left( \phiPone{i},\, \phiPone{j} \right)
	=
	\sum_{K\in\mesh} \overline{a}_K(\phiPone{i}, \, \phiPone{j}),
	\quad
	1 \leq i,j \leq N_0.
\end{equation*}
The identity~\eqref{eq:system-msfem-gen-decoupling} thus implies the following result, which generalizes Lemma~\ref{lem:stiffness-MsFEM-P1} to the PG-MsFEM in the general framework.
\begin{lemma}
\label{lem:gen-MsFEM-PG-eff}
The matrices~$\mathds{A}^\varepsilon$ and~$\mathds{A}^\Pone$ are identical if the integrals in~\eqref{eq:gen-MsFEM-eff-form} are evaluated at the centroid of each mesh element~$K$ for the computation of~$\mathds{A}^\Pone$. Then the PG-MsFEM~\eqref{eq:gen-MsFEM-OS-testP1} coincides with the resolution of the effective problem~\eqref{eq:gen-FEM-effective} combined with the post-processing step
\begin{equation}
	\label{eq:gen-MsFEM-post}
	\left. u_H^\varepsilon \right\vert_K
	=	
	u_H(x_{c,K}) \, \VVK{0} + \sum_{\alpha=1}^d \left. \partial_\alpha u_H \right\vert_K \VVK{\alpha}.
\end{equation} 
\end{lemma}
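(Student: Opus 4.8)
The plan is to establish the lemma in two stages, both of which follow almost immediately from the expansion~\eqref{eq:system-msfem-gen-decoupling} already derived above and from the linearity of the basis expansion~\eqref{eq:MsFEM-VVxy}. First I would prove the matrix identity $\mathds{A}^\varepsilon = \mathds{A}^\Pone$ under centroid quadrature, and then I would combine it with an elementary comparison of the two right-hand sides to conclude that the PG-MsFEM and the post-processed effective problem produce the same function.

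For the matrix identity, I would compare~\eqref{eq:system-msfem-gen-decoupling} term by term with the definition $\mathds{A}^\Pone_{j,i} = \sum_{K\in\mathcal{T}_H}\overline{a}_K(\phiPone{i},\phiPone{j})$ through the effective form~\eqref{eq:gen-MsFEM-eff-form}. The point to check carefully is the role of the centroid rule. Since the effective coefficients $\overline{A},\overline{B}^1,\overline{B}^2,\overline{M}$ are constant on each $K$ by~\eqref{eq:gen-MsFEM-eff-coef}, and the gradients $\nabla\phiPone{i},\nabla\phiPone{j}$ are constant on $K$, the diffusion integrand $\nabla\phiPone{j}\cdot\overline{A}\,\nabla\phiPone{i}$ is constant while the two advection integrands $\phiPone{j}\,\overline{B}^1\cdot\nabla\phiPone{i}$ and $\phiPone{i}\,\overline{B}^2\cdot\nabla\phiPone{j}$ are affine on $K$. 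The one-point centroid quadrature is exact for affine integrands, so for these three terms the centroid evaluation of $\overline{a}_K$ reproduces exactly the integrals appearing in~\eqref{eq:system-msfem-gen-decoupling}. Only the mass integrand $\overline{M}\,\phiPone{i}\phiPone{j}$ is genuinely quadratic, and for it~\eqref{eq:system-msfem-gen-decoupling} already carries precisely its centroid approximation $|K|\,(\overline{M}\phiPone{i}\phiPone{j})(x_{c,K})$. Hence evaluating all integrals in~\eqref{eq:gen-MsFEM-eff-form} at the centroid turns $\mathds{A}^\Pone$ into the right-hand side of~\eqref{eq:system-msfem-gen-decoupling}, that is $\mathds{A}^\Pone=\mathds{A}^\varepsilon$.

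For the second statement, I would observe that the two load vectors coincide: both the PG-MsFEM~\eqref{eq:gen-MsFEM-OS-testP1} and the effective problem~\eqref{eq:gen-FEM-effective} are tested against the same $\Pone$ functions $\phiPone{j}\in V_{H,0}$, so in each case the right-hand side is $\mathds{F}_j=F(\phiPone{j})$. Together with the matrix identity, this shows that the PG-MsFEM and the effective problem are governed by one and the same linear system; in particular one is well-posed if and only if the other is, and the coordinate vectors of their solutions coincide. Denoting this common solution by $U=(U_1,\dots,U_{N_0})$, the PG-MsFEM approximation is $u_H^\varepsilon=\sum_{i=1}^{N_0}U_i\,\phiEps{i}$ and the effective solution is $u_H=\sum_{i=1}^{N_0}U_i\,\phiPone{i}$.

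It then remains to identify $u_H^\varepsilon$ with the post-processing~\eqref{eq:gen-MsFEM-post} of $u_H$. Fixing an arbitrary element $K$, inserting the expansion~\eqref{eq:MsFEM-VVxy} of each $\phiEps{i}$, and regrouping by linearity, the computation reads
\begin{equation*}
    \left.u_H^\varepsilon\right\vert_K
    =
    \sum_{i=1}^{N_0} U_i \left( \phiPone{i}(x_{c,K})\,\VVK{0} + \sum_{\alpha=1}^d \left.\partial_\alpha\phiPone{i}\right\vert_K\,\VVK{\alpha} \right)
    =
    \left(\sum_{i=1}^{N_0} U_i\,\phiPone{i}(x_{c,K})\right)\VVK{0} + \sum_{\alpha=1}^d \left(\sum_{i=1}^{N_0}U_i\left.\partial_\alpha\phiPone{i}\right\vert_K\right)\VVK{\alpha}.
\end{equation*}
Recognizing $\sum_i U_i\,\phiPone{i}(x_{c,K})=u_H(x_{c,K})$ and $\sum_i U_i\,(\partial_\alpha\phiPone{i})\vert_K=(\partial_\alpha u_H)\vert_K$ gives exactly~\eqref{eq:gen-MsFEM-post}, and since $K$ is arbitrary the two procedures yield the same $u_H^\varepsilon$. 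The only genuinely delicate point in the whole argument is the exactness of the centroid rule for the affine advection terms in the first stage; all the rest is bookkeeping with linearity, so I expect no substantial obstacle.
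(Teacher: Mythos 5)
Your proof is correct and follows essentially the same route as the paper, which establishes the lemma through the derivation of the identity~\eqref{eq:system-msfem-gen-decoupling} (inserting~\eqref{eq:MsFEM-VVxy} and~\eqref{eq:P1-expansion} into~\eqref{eq:system-msfem-gen-testP1}) together with the observation that the centroid rule is exact for the constant and affine integrands while the mass term already appears as its centroid evaluation. Your additional steps — noting that both problems share the load vector $F(\phiPone{j})$ and regrouping the expansion $u_H^\varepsilon=\sum_i U_i\phiEps{i}$ by linearity to recover~\eqref{eq:gen-MsFEM-post} — make explicit exactly what the paper leaves implicit, with no gap.
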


Note that step~\ref{proc:noni-rhs} of the summary at the beginning of this section is irrelevant for the PG-MsFEM. The computation of the right-hand side in~\eqref{eq:gen-MsFEM-OS-testP1} is clearly part of any standard FEM software. We refer to Rem.~\ref{rem:postprocessing-compute} and~\ref{rem:postprocessing-vis} for some additional comments on the post-processing step.

The computational approach described by Lemma~\ref{lem:gen-MsFEM-PG-eff} naturally fits within the non-intrusive workflow of Algorithm~\ref{alg:msfem-diff-noni}. The numerical correctors on line~\ref{alg:msfem-diff-noni-offl-corr} are, of course, replaced by those of Def.~\ref{def:corr-dofe} or Def.~\ref{def:corr-dofc}. 
Line~\ref{alg:msfem-diff-noni-offl-tensors} is replaced by the computation of all effective quantities in~\eqref{eq:gen-MsFEM-eff-coef}, where~$\VVK{\alpha}$ is related to the numerical correctors by~\eqref{eq:VVK-def}. The online phase in line~\ref{alg:msfem-diff-noni-onl} amounts to solving the $\Pone$ problem~\eqref{eq:gen-FEM-effective}, where all integrations to construct the matrix of the linear system are to be performed by evaluation at the centroid. (This is not the case for the construction of the right-hand side, however.) Finally, in the post-processing phase, we construct~$u^\varepsilon_H$ from~$u_H$ by virtue of~\eqref{eq:gen-MsFEM-post}.

Next we generalize the above expansions to design a non-intrusive approximation of the G-MsFEM.

\subsection{The non-intrusive Galerkin MsFEM}
\label{sec:noni-gen-gal}
For the G-MsFEM (introduced in Def.~\ref{def:gen-MsFEM}), we need to replace the $\Pone$ test space~$V_{H,0}$ of the PG-MsFEM by the multiscale test space~$V_{H,0}^\varepsilon$.
The matrix of the linear system associated to~\eqref{eq:gen-MsFEM-OS} is given by
\begin{equation*}
	\mathds{A}_{j,i}^{\varepsilon,\mathsf{G}}
	=
	\sum_{K\in\mesh} a_K^\varepsilon {\left( \phiEps{i},\phiEps{j} \right)},
	\quad
	1 \leq i,j \leq N_{0}.
\end{equation*}
Upon inserting~\eqref{eq:MsFEM-Vxy} for the test function~$\phiEps{j}$, we find, for all $1 \leq i,j \leq N_0$,
\begin{equation*}
	\mathds{A}_{j,i}^{\varepsilon,\mathsf{G}}
	=
	\mathds{A}_{j,i}^\varepsilon
	+ \sum_{K\in\mesh} \left(
		\phiPone{j}\left(x_{c,K}\right) \, a^\varepsilon_K{\left(\phiEps{i}, \VKOS{0}{0}\right)} 
		+
		\sum_{\beta=1}^d \left.\left(\partial_\beta \phiPone{j}\right) \right\vert_K \, a^\varepsilon_K{\left(\phiEps{i},\VKOS{\beta}{0}\right)}
		\right),
\end{equation*}
where~$\mathds{A}^\varepsilon$ is the matrix of the Petrov-Galerkin MsFEM, see~\eqref{eq:system-msfem-gen-testP1} and~\eqref{eq:system-msfem-gen-decoupling}.

An effective formulation can again be derived by inserting~\eqref{eq:MsFEM-VVxy} for the~$\phiEps{i}$. We obtain
\begin{tcolorbox}[ams equation*]
		\mathds{A}^{\varepsilon,\mathsf{G}}_{j,i}
		=
		\mathlarger{\mathlarger{\sum}}_{K\in\mesh}
			|K| \, \left( \overline{M}^\mathsf{G} \phiPone{i} \, \phiPone{j} \right)(x_{c,K})
			+ \int_K \phiPone{j} \, \overline{B}^{1,\mathsf{G}} \cdot \nabla \phiPone{i}
			+ \phiPone{i} \, \overline{B}^{2,\mathsf{G}} \cdot \nabla \phiPone{j}
			+ \nabla \phiPone{j} \cdot \overline{A}^\mathsf{G} \, \nabla \phiPone{i},
\end{tcolorbox}
\noindent
where the effective mass, (adjoint) advection vectors and diffusion tensor are given by (using those defined in~\eqref{eq:gen-MsFEM-eff-coef})
\begin{equation}
    \begin{IEEEeqnarraybox}[][c]{us?us}
        \IEEEstrut
        $\left. \overline{M}^\mathsf{G} \right\vert_K $
        &$\displaystyle = \left. \overline{M} \right\vert_K + \frac{1}{|K|} \, a_K^\varepsilon{\left(\VVK{0},\VKOS{0}{0}\right)}$,
        &$\left. \overline{B}^{1,\mathsf{G}}_\alpha \right\vert_K$ 
        &$\displaystyle = \left. \overline{B}^{1}_\alpha \right\vert_K + \frac{1}{|K|} \, a_K^\varepsilon{\left(\VVK{\alpha},\VKOS{0}{0}\right)}$,
        \\
        $\left. \overline{B}^{2,\mathsf{G}}_\beta \right\vert_K$
        &$\displaystyle = \left. \overline{B}^{2}_\beta \right\vert_K + \frac{1}{|K|} \, a_K^\varepsilon{\left(\VVK{0},\VKOS{\beta}{0}\right)}$,
        &$\left. \overline{A}^\mathsf{G}_{\beta,\alpha} \right\vert_K$
        &$\displaystyle = \left. \overline{A}_{\beta,\alpha} \right\vert_K + \frac{1}{|K|} \, a^\varepsilon_K {\left(\VVK{\alpha}, \VKOS{\beta}{0}\right)}$.
        \IEEEstrut
        \IEEEeqnarraynumspace 
    \end{IEEEeqnarraybox}
	\label{eq:gen-MsFEM-eff-coef-Gal}
\end{equation}
Again, these quantities are all piecewise constant.

The above computations lead to the introduction of the effective bilinear form $\overline{a}^{\mathsf{G}} = \sum\limits_{K\in\mesh} \overline{a}^{\mathsf{G}}_K$ with
\begin{equation}
\label{eq:gen-MsFEM-eff-form-Gal}
	\overline{a}_K^\mathsf{G}(u,v) 
	=
	\int_K \nabla v \cdot \overline{A}^\mathsf{G} \, \nabla u
		+ v \left(\overline{B}^{1,\mathsf{G}} \cdot \nabla u\right)
		+ u \left(\overline{B}^{2,\mathsf{G}} \cdot \nabla v\right)
		+ \overline{M}^\mathsf{G} \, u \, v.
\end{equation}
We formulate the following effective variational problem:
\begin{equation}
	\text{Find } u_H\in V_{H,0} \text{ such that}
	\sum_{K\in\mesh} 
		\overline{a}^{\mathsf{G}}_K(u_H,v_H)
	=
	F(v_H)
	\quad \text{for all } v_H \in V_{H,0}.
\label{eq:gen-FEM-effective-gal}
\end{equation}
The associated linear system has coefficients $\mathds{A}_{j,i}^{\Pone,\mathsf{G}} =  \overline{a}^{\mathsf{G}} \left( \phiPone{i},\phiPone{j} \right)$. We have the following analogue of Lemma~\ref{lem:gen-MsFEM-PG-eff}, which generalizes Lemma~\ref{lem:stiffness-MsFEM-P1} to the G-MsFEM in the general framework.

\begin{lemma}
\label{lem:gen-MsFEM-Gal-eff}
The matrices~$\mathds{A}^{\varepsilon,\mathsf{G}}$ and~$\mathds{A}^{\Pone,\mathsf{G}}$ are identical if the integrals in~\eqref{eq:gen-MsFEM-eff-form-Gal} are evaluated at the centroid of each mesh element~$K$ in the computation of~$\mathds{A}^{\Pone,\mathsf{G}}$. 
\end{lemma}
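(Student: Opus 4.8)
The plan is to observe that the boxed identity~\eqref{eq:system-msfem-gal-gen-decoupling}, derived immediately above, already expresses $\mathds{A}^{\varepsilon,\mathsf{G}}_{j,i}$ in precisely the effective form we need, so that the statement reduces to a quadrature-exactness check, in direct analogy with Lemma~\ref{lem:gen-MsFEM-PG-eff}. Concretely, by definition $\mathds{A}^{\Pone,\mathsf{G}}_{j,i} = \sum_{K\in\mathcal{T}_H} \overline{a}_K^{\mathsf{G}}(\phiPone{i},\phiPone{j})$ with $\overline{a}_K^{\mathsf{G}}$ given by~\eqref{eq:gen-MsFEM-eff-form-Gal}, so it suffices to show that applying centroid quadrature to each of the four integral terms of $\overline{a}_K^{\mathsf{G}}(\phiPone{i},\phiPone{j})$ reproduces, term by term, the right-hand side of~\eqref{eq:system-msfem-gal-gen-decoupling}, and then to sum over $K$.

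First I would record the polynomial degree of each integrand on a fixed element $K$. The effective coefficients $\overline{A}^\mathsf{G}$, $\overline{B}^{1,\mathsf{G}}$, $\overline{B}^{2,\mathsf{G}}$ and $\overline{M}^\mathsf{G}$ defined in~\eqref{eq:gen-MsFEM-eff-coef-Gal} are piecewise constant on $\mathcal{T}_H$, and the gradients $\nabla\phiPone{i}$, $\nabla\phiPone{j}$ are constant on $K$ since $\phiPone{i},\phiPone{j}$ are affine there. Hence the diffusion integrand $\nabla\phiPone{j}\cdot\overline{A}^\mathsf{G}\nabla\phiPone{i}$ is constant on $K$, the two advection integrands $\phiPone{j}\,\overline{B}^{1,\mathsf{G}}\cdot\nabla\phiPone{i}$ and $\phiPone{i}\,\overline{B}^{2,\mathsf{G}}\cdot\nabla\phiPone{j}$ are affine on $K$, while the reaction integrand $\overline{M}^\mathsf{G}\,\phiPone{i}\,\phiPone{j}$ is quadratic on $K$.

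The key step is then the standard fact that one-point quadrature at the centroid $x_{c,K}$ with weight $|K|$ integrates polynomials of degree at most one exactly over a simplex. Consequently the diffusion term (degree $0$) and both advection terms (degree $1$) are computed exactly by centroid quadrature and therefore equal the corresponding integral terms $\int_K \nabla\phiPone{j}\cdot\overline{A}^\mathsf{G}\nabla\phiPone{i}$, $\int_K\phiPone{j}\,\overline{B}^{1,\mathsf{G}}\cdot\nabla\phiPone{i}$ and $\int_K\phiPone{i}\,\overline{B}^{2,\mathsf{G}}\cdot\nabla\phiPone{j}$ appearing in~\eqref{eq:system-msfem-gal-gen-decoupling}. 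Only the reaction term, being quadratic, is not reproduced exactly: centroid quadrature replaces $\int_K\overline{M}^\mathsf{G}\phiPone{i}\phiPone{j}$ by $|K|\,(\overline{M}^\mathsf{G}\phiPone{i}\phiPone{j})(x_{c,K})$, which is exactly the first summand in~\eqref{eq:system-msfem-gal-gen-decoupling}. Summing over $K$ and comparing with~\eqref{eq:system-msfem-gal-gen-decoupling} then yields $\mathds{A}^{\Pone,\mathsf{G}}_{j,i}=\mathds{A}^{\varepsilon,\mathsf{G}}_{j,i}$ for all $1\leq i,j\leq N_0$.

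I do not anticipate a genuine obstacle here: the substantive work has already been carried out in deriving the boxed identity~\eqref{eq:system-msfem-gal-gen-decoupling} from~\eqref{eq:MsFEM-Vxy} and~\eqref{eq:MsFEM-VVxy}. The only point requiring care is the bookkeeping of polynomial degrees, since it is precisely this that explains why the lemma must be stated with centroid evaluation of the integrals rather than their exact values: the quadratic reaction term is the sole contribution that forces the quadrature qualification, whereas the diffusion and advection terms would agree even under exact integration. This mirrors the distinction already noted beneath~\eqref{eq:gen-MsFEM-eff-coef} for the Petrov-Galerkin case in Lemma~\ref{lem:gen-MsFEM-PG-eff}.
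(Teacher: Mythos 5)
Your proposal is correct and takes essentially the same route as the paper: the paper gives no separate proof, since the lemma follows directly from the boxed identity~\eqref{eq:system-msfem-gal-gen-decoupling} together with the quadrature remark stated below~\eqref{eq:system-msfem-gen-decoupling} (centroid quadrature is exact for the degree-$\leq 1$ diffusion and advection integrands, and reproduces the term $|K|\,(\overline{M}^\mathsf{G}\phiPone{i}\phiPone{j})(x_{c,K})$ for the quadratic mass term). Your explicit degree bookkeeping simply spells out that remark, correctly identifying the mass term as the sole reason the lemma requires the centroid-evaluation qualification.
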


Contrary to the matrices, the right-hand sides of the effective problem~\eqref{eq:gen-FEM-effective-gal} and the Galerkin MsFEM~\eqref{eq:gen-MsFEM-OS} are not equal in general. We apply step~\ref{proc:noni-rhs} formulated at the beginning of this section: the right-hand side of the G-MsFEM is approximated by the right-hand side of the effective problem to obtain an approximate, but non-intrusive, MsFEM. The non-intrusive G-MsFEM becomes:
\begin{equation}
	\text{Find } u^\varepsilon_H \in V_{H,0}^\varepsilon \text{ such that } 
	\sum_{K \in \mesh} 
		a_K^{\varepsilon}\left(u_H^\varepsilon, \phiEps{j} \right) 
	= 
	F\left(\phiPone{j}\right) \quad \text{for all } 1 \leq j \leq N_{0}.
	\label{eq:gen-MsFEM-noni}
\end{equation}
This problem is no longer a Galerkin approximation of~\eqref{eq:diffusion-pde}, because different test spaces are used for the bilinear and for the linear form. In view of Lemma~\ref{lem:gen-MsFEM-Gal-eff}, the non-intrusive MsFEM can equivalently be formulated as
\begin{equation*}
	\text{compute } u_H \in V_{H,0} \text{ solution to~\eqref{eq:gen-FEM-effective-gal} and compute } u^\varepsilon_H \text{ from } u_H \text{ by~\eqref{eq:gen-MsFEM-post}},
\end{equation*}
provided all integrals in~\eqref{eq:gen-MsFEM-eff-form-Gal} are evaluated at the centroid for the construction of the matrix of the linear system in~\eqref{eq:gen-FEM-effective-gal}.

The latter formulation of the non-intrusive MsFEM immediately suggests how to effectively implement the non-intrusive MsFEM in a non-intrusive way similar to Algorithm~\ref{alg:msfem-diff-noni}. For completeness, we provide the algorithm for the non-intrusive G-MsFEM in Algorithm~\ref{alg:msfem-gen-noni}.

\begin{algorithm}[ht]
\caption{Non-intrusive G-MsFEM for the general framework}
\label{alg:msfem-gen-noni}
\begin{algorithmic}[1]
        \State Let $\mesh$ be the mesh used by the legacy code, let $\bullet \in \{\mathsf{e},\mathsf{c}\}$ be the chosen oversampling variant

    \medskip
    
    \ForAll{$K \in \mesh$}
        \For{$0 \leq \alpha \leq d$}
            \State Solve for the applicable $\VKOS{\alpha}{0}$ from Def.~\ref{def:corr-dofe} or~\ref{def:corr-dofc}
        \EndFor
		\State Compute the effective tensors defined in~\eqref{eq:gen-MsFEM-eff-coef-Gal}
	\EndFor
	
	\medskip
    
    \State Use the legacy code to construct the matrix~$\mathds{A}^\Pone$ by evaluating~\eqref{eq:gen-MsFEM-eff-form-Gal} at the centroid of each mesh element and to solve for~$u_H$ defined by~\eqref{eq:gen-FEM-effective-gal}
    \State Save $\left\{u_H(x_{c,K})\right\}_{K\in\mesh}$ and $\left\{ (\partial_\alpha u_H) \vert_K \right\}_{K\in\mesh, \, 1 \leq \alpha \leq d}$

    \medskip 
    
    \State Obtain the MsFEM approximation~$u^\varepsilon_H$ from~\eqref{eq:gen-MsFEM-post}
\end{algorithmic}
\end{algorithm}

The discussion surrounding Algorithm~\ref{alg:msfem-diff-noni} regarding the advantages for the implementation of this non-intrusive MsFEM approach also applies here.

Let us now comment on the well-posedness of the MsFEMs for the general framework introduced above. We recall that the hypotheses of the general framework without oversampling, or with \DOF-continuous oversampling, provide well-posedness of the G-MsFEM~\eqref{eq:gen-MsFEM-OS} by Lemma~\ref{lem:MsFEM-OS-well-posedness}. In this case, the non-intrusive approximation~\eqref{eq:gen-MsFEM-noni} is also well-posed, because the matrices associated to both MsFEM variants are the same. 
Regarding the PG-MsFEM~\eqref{eq:gen-MsFEM-OS-testP1}, we can only establish well-posedness if the associated matrix coincides with the matrix of the corresponding Galerkin MsFEM. This is stated in the following lemma, which generalizes Lemma~\ref{lem:IBP-bubbles} to the general framework.

\begin{lemma}
\label{lem:IBP-bubbles-gen}
Consider a G-MsFEM as defined by Def.~\ref{def:gen-MsFEM} without oversampling and suppose that the sampling form~$s^\varepsilon_K$ equals the local bilinear form~$a_K^\varepsilon$. Then the matrix associated to this G-MsFEM coincides with the matrix associated to the corresponding PG-MsFEM of Def.~\ref{def:gen-MsFEM-OS-testP1}. Consequently, the non-intrusive Galerkin MsFEM~\eqref{eq:gen-MsFEM-noni} coincides with the Petrov-Galerkin MsFEM~\eqref{eq:gen-MsFEM-OS-testP1} and in particular, the Petrov-Galerkin MsFEM is well-posed.
\end{lemma}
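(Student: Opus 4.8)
The plan is to show directly that the two stiffness matrices coincide, and then to read off the remaining assertions from this coincidence together with the already-established well-posedness of the Galerkin MsFEM. Recall from the computation preceding this lemma that the Galerkin and Petrov-Galerkin matrices differ only by corrector-dependent terms, namely
\begin{equation*}
	\mathds{A}_{j,i}^{\varepsilon,\mathsf{G}}
	=
	\mathds{A}_{j,i}^\varepsilon
	+ \sum_{K\in\mathcal{T}_H} \left(
		\phiPone{j}\left(x_{c,K}\right) \, a^\varepsilon_K\left(\phiEps{i}, \VKOS{0}{0}\right)
		+
		\sum_{\beta=1}^d \left.\left(\partial_\beta \phiPone{j}\right) \right\vert_K \, a^\varepsilon_K\left(\phiEps{i},\VKOS{\beta}{0}\right)
	\right).
\end{equation*}
It therefore suffices to prove that $a^\varepsilon_K(\phiEps{i}, \VKOS{\alpha}{0}) = 0$ for every $K \in \mathcal{T}_H$ and every $0 \leq \alpha \leq d$.

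The key step exploits the two hypotheses in exactly the right places. First, since we work without oversampling, Remark~\ref{rem:DOFec-noOS} gives $S_K = K$ and identifies $\VKOS{\alpha}{0} = \VK{\alpha}$; by Def.~\ref{def:corr-dofe} (or, equivalently here, Def.~\ref{def:corr-dofc}) these correctors lie in the sampling test space~$V_{K,0}$. Second, Lemma~\ref{lem:msbasis-OS-vf-dofe} (respectively Lemma~\ref{lem:msbasis-OS-vf-dofc}) with $S_K = K$ characterises the multiscale basis function restricted to~$K$ as the solution of $s_K^\varepsilon(\phiEps{i}, w) = 0$ for all $w \in V_{K,0}$. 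The assumption $s_K^\varepsilon = a_K^\varepsilon$ upgrades this to $a_K^\varepsilon(\phiEps{i}, w) = 0$ for all $w \in V_{K,0}$, and taking $w = \VK{\alpha} \in V_{K,0}$ yields the desired vanishing. Hence all the corrector terms above drop out and $\mathds{A}^{\varepsilon,\mathsf{G}} = \mathds{A}^\varepsilon$.

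The remaining claims follow immediately. Under the stated hypotheses, Lemma~\ref{lem:MsFEM-OS-well-posedness} guarantees that the Galerkin MsFEM~\eqref{eq:gen-MsFEM-OS}, and hence its matrix~$\mathds{A}^{\varepsilon,\mathsf{G}}$, is invertible; by the equality just proved, $\mathds{A}^\varepsilon$ is invertible as well, which establishes well-posedness of the PG-MsFEM~\eqref{eq:gen-MsFEM-OS-testP1}. Finally, the non-intrusive Galerkin MsFEM~\eqref{eq:gen-MsFEM-noni} and the PG-MsFEM~\eqref{eq:gen-MsFEM-OS-testP1} share the same invertible matrix $\mathds{A}^\varepsilon = \mathds{A}^{\varepsilon,\mathsf{G}}$ and the same right-hand side $F(\phiPone{j})$, so their unique solutions coincide.

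I do not anticipate a genuine obstacle here: once the variational characterisation of the basis functions is in hand, the argument is essentially algebraic. The only point requiring care is to verify that both hypotheses are genuinely needed and used correctly — the absence of oversampling is what makes the test functions in Lemma~\ref{lem:msbasis-OS-vf-dofe} range over the very space~$V_{K,0}$ that contains the correctors, while the identity $s_K^\varepsilon = a_K^\varepsilon$ is what transfers the orthogonality from the sampling form to the bilinear form of the problem. Dropping either hypothesis, the corrector terms need not vanish, which is precisely why Lemma~\ref{lem:IBP-bubbles} fails to extend to the full framework.
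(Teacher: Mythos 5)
Your proposal is correct and takes essentially the same route as the paper: the paper's proof writes the matrix difference as $\sum_{K\in\mathcal{T}_H} a_K^{\varepsilon}\left(\phiEps{i},\,\phiEps{j}-\phiPone{j}\right)$ and kills it by noting that $\phiEps{j}-\phiPone{j}\in V_{K,0}$ (equal degrees of freedom on~$K$) together with the variational characterization~\eqref{eq:msbasis-OS-vf-dofe} with $S_K=K$ and $s_K^\varepsilon=a_K^\varepsilon$, which is precisely your corrector-by-corrector cancellation bundled through the expansion~\eqref{eq:MsFEM-Vxy}. Your handling of the remaining claims (same matrix and same right-hand side for~\eqref{eq:gen-MsFEM-noni} and the PG-MsFEM, with invertibility inherited from Lemma~\ref{lem:MsFEM-OS-well-posedness}) likewise matches the paper's discussion immediately preceding the lemma.
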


\begin{proof}
	To prove the lemma, we show that the matrices corresponding to the linear problems defined in~\eqref{eq:gen-MsFEM-noni} and~\eqref{eq:gen-MsFEM-OS-testP1} are equal. Using that $s_K^\varepsilon = a_K^{\varepsilon}$, we have for all $1 \leq i, \, j \leq N_{0}$,
	\begin{equation}
		\mathds{A}^{\varepsilon,\mathsf{G}}_{j,i} - \mathds{A}^\varepsilon_{j,i}
		=
		\sum_{K\in\mesh} a_K^{\varepsilon}\left(\phiEps{i},\phiEps{j} - \phiPone{j}\right)
		=
		\sum_{K\in\mesh} s_K^\varepsilon \left(\phiEps{i}, \phiEps{j}-\phiPone{j} \right)
		=0.
		\label{eq:IBP-bubbles}
	\end{equation}
The last equality stems from the fact that the multiscale basis functions satisfy~$\Gamma(K,\phiEps{i}) = \Gamma(K,\phiPone{i})$ for all~$K$, so that $\phiEps{j} - \phiPone{j} \in V_{K,0}$ (see~\eqref{eq:msbasis-OS-vf-dofe} with $S_K=K$ and recall Def.~\ref{def:sampling-space-OS-dofe} for the sampling test space~$V_{K,0}^ \varepsilon$), and the variational problem in~\eqref{eq:msbasis-OS-vf-dofe} (with $S_K=K$) shows that the above quantity vanishes.
\end{proof}

\subsection{Further extensions of the non-intrusive MsFEM}
\label{sec:noni-gen-more}
We sketch some other FEM settings to which we have applied the above strategy to develop non-intrusive MsFEM approaches. For more details, we refer to~\cite{biezemans_difficult_nodate}.

\textbf{Stabilized finite element formulations.} 
In the context of advection-diffusion problems, stabilized finite element formulations add mesh-dependent terms to a discrete variational formulation (such as~\eqref{eq:gen-MsFEM-OS}) to remove numerical instabilities, for example caused by sharp boundary layers of the exact solution. See~\cite{le_bris_numerical_2017} for such a variant of the MsFEM and see \cite{brooks_streamline_1982, hughes_new_1985, quarteroni_numerical_2017} for the stabilization of single-scale problems. The expansion~\eqref{eq:MsFEM-VVxy} can also be inserted in these additional terms to find a non-intrusive implementation of the associated MsFEM.

\textbf{Petrov-Galerkin formulations.}
Other test spaces than the $\Pone$ space~$V_{H,0}$ can be considered in Petrov-Galerkin formulations. An example would be to use multiscale test functions that locally solve the adjoint problem rather than the direct problem, introducing yet another bilinear form than~$s_K^\varepsilon$ in~\eqref{eq:msbasis-OS-vf-dofe} or~\eqref{eq:msbasis-OS-vf-dofc}. See e.g.~\cite{franca_recovering_2000}. An expansion of the kind~\eqref{eq:MsFEM-VVxy} can still be found for such test functions, with a suitably adapted definition of the numerical correctors. This way, a non-intrusive formulation can be found using the techniques of this work.

\textbf{Non-homogeneous Dirichlet conditions.}
Suppose that a legacy FEM code can provide a solution to an effective problem such as~\eqref{eq:gen-FEM-effective} posed on the space~$V_{H,0}$ and complemented with non-homogeneous Dirichlet conditions for~$u_H$ on~$\partial\Omega$. This solution can directly be used to construct a multiscale approximation~$u^\varepsilon_H \in V_H^\varepsilon$ from~\eqref{eq:gen-MsFEM-post}. The translation of the Dirichlet condition to the MsFEM approximation is as follows: if \DOF-continuous oversampling is applied, the function~$u^\varepsilon_H$ satisfies~$[\Gamma(K,u_H^\varepsilon)]_j = [\Gamma(K,u_H)]_j$ for all degrees of freedom associated to the boundary. Here, $[\Gamma(K,u_H)]_j$ is determined by the legacy code. When \DOF-extended oversampling is used, the degrees of freedom associated to the boundary are equal to the sum of $[\Gamma(K,u_H)]_j$ and a perturbation due to the fact that the degrees of freedom of the numerical correctors do not vanish.

\textbf{Neumann conditions.}
To apply Neumann conditions on~$\partial \Omega$, one solves a Galerkin approximation of the variational formulation in the space~$V_H^\varepsilon$. The suitable adaptation of~\eqref{eq:gen-MsFEM-OS} can be approximated by a non-intrusive Galerkin MsFEM following the same methodology as above. The effective $\Pone$ approximation that is obtained corresponds to the resolution of an effective PDE with Neumann conditions, for which a legacy code can be used. In the case of the diffusion problem~\eqref{eq:diffusion-pde}, the Neumann boundary condition in the effective problem is imposed on the effective flux~$\vec{n} \cdot \overline{A} \nabla u_H$, where~$\overline{A}$ is defined in~\eqref{eq:diffusion-eff-msfem-gal}.

\textbf{Parabolic equations.}
When a parabolic equation is discretized in time, problems of the form~\eqref{eq:gen-pb} are typically obtained for each time step, but with a right-hand side that depends on the solution of the previous time step. This term belongs to the space~$V_H^\varepsilon$, so it varies on the microscale and cannot be integrated numerically by the legacy code that operates on the coarse mesh. The non-intrusive strategy of the foregoing sections cannot be applied directly to find a non-intrusive MsFEM. In the vein of our non-intrusive approach, one could introduce an additional approximation by replacing the multiscale solution of the previous time step by its underlying~$\Pone$ representation in the $\Pone$ space~$V_H$. Studying the effect of this approximation is beyond the scope of the present work.

\subsection{Intrusiveness of other multiscale methods}
\label{sec:noni-gen-other}

Some work on the formulation of effective $\Pone$ problems in multiscale methods, and the related question of non-intrusive approaches, can be found in the literature. We discuss here the case of the HMM and the LOD method in the context of numerical homogenization, and provide some additional references to other fields at the end of the section.

First, the HMM is less intrusive than the original MsFEM, because its main objective is to approximate~$u^\varepsilon$ on the coarse scale. The HMM directly proposes to solve a $\Pone$ problem on the coarse scale, where effective coefficients of the $\Pone$ problem are defined in terms of the solutions to local problems. This workflow corresponds to our non-intrusive MsFEM approach, and when the local problems of the HMM coincide with the computation of the numerical correctors introduced in this work, the HMM and the MsFEM for the pure diffusion problem are identical. For more general problems, there is an important difference between the two methods. In the MsFEM, the form of the effective equation and the definition of the effective coefficients follows directly from the choice of basis functions, and thus from the choice of local problems. For the HMM, the local problems and the effective equation are formulated independently, and the link between the two is only justified heuristically, drawing inspiration from homogenization theory.

The LOD method aims at approximating~$u^\varepsilon$ at both the coarse and the microscale by the use of multiscale basis functions, like the MsFEM. It is shown in~\cite{gallistl_computation_2017} that a Petrov-Galerkin LOD method (see also~\cite{elfverson_multiscale_2015}) can, with some additional approximations, be recast as the $\Pone$ discretization of an appropriate coarse-scale problem. This opens the way to non-intrusive implementations in the spirit of the present article. The LOD method and the MsFEM notably differ in the fact that the LOD basis functions are defined on a patch around the vertices of the mesh that should generally be taken larger than the support of the associated $\Pone$ functions. In contrast, the MsFEM uses fully localized basis functions (even though they may have been computed using oversampling patches), each of which has the same support as the corresponding $\Pone$ basis functions.

The question of non-intrusive implementations of multiscale algorithms is an interesting and relevant question in many more fields of scientific computing than we can discuss here. Beyond the field of finite element methods, we mention the multiscale finite volume method~\cite{jenny_multi-scale_2003,hajibeygi_iterative_2008}, in which a non-intrusive coupling between the local and global computations is natural, since the local computations lead to transmissibilities that can be used in a separate, global finite volume simulator. 
Other than numerical homogenization methods, there are mixed finite element methods for multiscale modelling~\cite{chen_mixed_2002, arbogast_implementation_2002, arbogast_multiscale_2007} and domain decomposition techniques (such as the generalized FEM, patches of finite elements, numerical zoom; see~\cite{brezzi_chimera_2003, glowinski_finite_2005, apoung_kamga_numerical_2007}), for which non-intrusive approaches can e.g.~be found in~\cite{duval_non-intrusive_2016,gupta_analysis_2012}.
Finally, we would like to mention the reduced basis method for the efficient resolution of parameterized PDEs. Non-intrusive adaptations of this method (both for finite element and finite volume schemes) have been proposed and analyzed e.g.~in~\cite{chakir_two-grid_2009, chakir_non-intrusive_2019, grosjean_error_2021}.

\section{Comparison of the classical and non-intrusive MsFEM for diffusion problems}
	\label{sec:compare-gal-pg}


We study in this section a particular setting within the general MsFEM framework, namely that of MsFEMs for diffusion problems. We set in this section $a^\varepsilon_K = a^{\varepsilon,\dif}_K$ defined in Example~\ref{ex:diffusion-vf-gen}, and we choose the sampling form~$s_K^\varepsilon = a^{\varepsilon,\dif}_K$. 

\subsection{The general framework for diffusion problems}
For the convenience of the reader, we first give an explicit description of the simplifications of the general framework in the diffusion setting. In Def.~\ref{def:corr-dofe} and~\ref{def:corr-dofc} for the numerical correctors, Equation~\eqref{eq:MsFEM-gen-correctors-dofe} reduces to
\begin{equation}
	a_K^{\varepsilon,\dif} \left( \VSK{\alpha}{0}, w \right)
	=
	-a_K^{\varepsilon,\dif} \left( x^\alpha, w \right),
	\label{eq:diffusion-gen-correctors}
\end{equation}
for all $w\in V_{K,0}$ (where $V_{K,0}$ is the sampling test space for either the MsFEM-lin or the MsFEM-CR; see Examples~\ref{ex:msfem-lin-dofe} and~\ref{ex:msfem-cr-dofe})
when $1 \leq \alpha \leq d$, whereas~$\VSK{0}{0} = 0$. (The notation~$\VK{\alpha}$ will be used in the absence of oversampling, see Rem.~\ref{rem:DOFec-noOS}.) This means that $\VVK{0}=1$ in~\eqref{eq:MsFEM-VVxy}. Consequently, regarding the formulation of the effective $\Pone$ problem, only the effective diffusion coefficient does not vanish in~\eqref{eq:gen-MsFEM-eff-coef} and~\eqref{eq:gen-MsFEM-eff-coef-Gal}. Its definition in~\eqref{eq:gen-MsFEM-eff-coef-Gal} is identical to the formula in~\eqref{eq:diffusion-eff-msfem-gal} for the applicable choice of the numerical correctors.

The definition of the multiscale basis functions by~\eqref{eq:MsFEM-Vxy} reduces to~\eqref{eq:diffusion-MsFEM-Vxy-grad} (again upon replacing the numerical correctors~$\VK{\alpha}$ by the relevant ones for the MsFEM under consideration). Hence, we can associate a multiscale counterpart in~$V_H^\varepsilon$ to any~$v_H \in V_H$, given by
\begin{equation}
	v_H^\varepsilon = v_H 
	+ \sum_{K\in\mesh} \sum_{\alpha=1}^d 
		(\partial_\alpha v_H)\vert_K \, \VKOS{\alpha}{0}.
\label{eq:diffusion-MsFEM-Vxy}
\end{equation}

The non-intrusive MsFEM~\eqref{eq:gen-MsFEM-noni} becomes
\begin{equation}
	\text{Find } u^\varepsilon_H \in V_{H,0}^\varepsilon \text{ such that } 
	\sum_{K \in \mesh} 
		a_K^{\varepsilon,\dif}\left(u_H^\varepsilon, \phiEps{j} \right) 
	= 
	F\left(\phiPone{j}\right) \quad \text{for all } 1 \leq j \leq N_{0}.
	\label{eq:diffusion-gen-MsFEM-noni}
\end{equation}
Lemma~\ref{lem:IBP-bubbles-gen} now amounts to the following.
\begin{lemma}
\label{lem:IBP-bubbles-diffusion}
	Let `MsFEM' refer to the MsFEM-lin or the MsFEM-CR, both without oversampling. The non-intrusive Galerkin MsFEM~\eqref{eq:diffusion-gen-MsFEM-noni} coincides with the following Petrov-Galerkin MsFEM:
	\begin{equation}
		\text{Find } u^\varepsilon_H \in V_{H,0}^\varepsilon \text{ such that } 
		\sum_{K \in \mesh} 
			a_K^{\varepsilon,\dif} \left(u_H^\varepsilon, \phiPone{j} \right) 
		= 
		F \left( \phiPone{j} \right) \quad \text{for all } 1 \leq j \leq N_{0},
		\label{eq:diffusion-gen-MsFEM-testP1}	
	\end{equation}
\end{lemma}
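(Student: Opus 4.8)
The plan is to recognize this statement as the specialization of Lemma~\ref{lem:IBP-bubbles-gen} to the diffusion setting fixed throughout Sec.~\ref{sec:compare-gal-pg}, and to verify that the hypotheses of that lemma are indeed met here. First I would note that in the present setting we have taken $a_K^\varepsilon = a_K^{\varepsilon,\dif}$ and chosen the sampling form $s_K^\varepsilon = a_K^{\varepsilon,\dif}$, so that the crucial identity $s_K^\varepsilon = a_K^\varepsilon$ holds; moreover, `MsFEM' here refers to the MsFEM-lin or the MsFEM-CR \emph{without} oversampling. These are precisely the assumptions under which Lemma~\ref{lem:IBP-bubbles-gen} asserts that the matrix of the Galerkin MsFEM coincides with that of the corresponding Petrov-Galerkin MsFEM. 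The statement to be proved then follows immediately.

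Concretely, I would show that the matrix $\mathds{A}^{\varepsilon,\mathsf{G}}_{j,i} = \sum_{K\in\mathcal{T}_H} a_K^{\varepsilon,\dif}(\phiEps{i}, \phiEps{j})$ of~\eqref{eq:diffusion-gen-MsFEM-noni} agrees with the matrix $\mathds{A}^\varepsilon_{j,i} = \sum_{K\in\mathcal{T}_H} a_K^{\varepsilon,\dif}(\phiEps{i}, \phiPone{j})$ of~\eqref{eq:diffusion-gen-MsFEM-testP1}. By bilinearity, their difference reduces to proving that $\sum_{K\in\mathcal{T}_H} a_K^{\varepsilon,\dif}(\phiEps{i}, \phiEps{j} - \phiPone{j}) = 0$ for all $i,j$. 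The key observation is that, since the degrees of freedom of $\phiEps{j}$ and $\phiPone{j}$ coincide on each element (recall~\eqref{eq:msbasis-OS-vf-dofe} with $S_K = K$), the function $\phiEps{j} - \phiPone{j}$ lies in the sampling test space $V_{K,0}$ on every $K$. The variational problem~\eqref{eq:msbasis-OS-vf-dofe} then yields $s_K^\varepsilon(\phiEps{i}, w) = 0$ for all $w \in V_{K,0}$, and because $s_K^\varepsilon = a_K^{\varepsilon,\dif}$, each summand vanishes. Since the right-hand sides $F(\phiPone{j})$ of the two discrete problems are identical, equality of the matrices forces the two MsFEMs to coincide.

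There is no genuine obstacle here: the argument is essentially verbatim that of Lemma~\ref{lem:IBP-bubbles-gen}, restricted to the diffusion form, and one can verify that the cancellation covers both the MsFEM-lin (nodal degrees of freedom) and the MsFEM-CR (face-average degrees of freedom), since in both cases $\Gamma(K,\phiEps{j}) = \Gamma(K,\phiPone{j})$ places $\phiEps{j} - \phiPone{j}$ in $V_{K,0}$. The only point deserving attention is to make sure the identification $s_K^\varepsilon = a_K^\varepsilon$ is actually invoked: this is precisely what allows one to replace $a_K^{\varepsilon,\dif}(\phiEps{i}, \cdot)$ by $s_K^\varepsilon(\phiEps{i}, \cdot)$ and exploit the orthogonality~\eqref{eq:msbasis-OS-vf-dofe} of the multiscale basis functions against $V_{K,0}$. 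Were the sampling form to differ from the diffusion form (as may happen for advection-diffusion problems), this cancellation would fail and the two methods would no longer agree, so the hypothesis is essential rather than cosmetic.
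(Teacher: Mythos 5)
Your proof is correct and follows essentially the same route as the paper: the paper proves the statement via Lemma~\ref{lem:IBP-bubbles-gen}, whose proof is exactly your computation — using $s_K^\varepsilon = a_K^{\varepsilon,\dif}$ to write $\mathds{A}^{\varepsilon,\mathsf{G}}_{j,i} - \mathds{A}^\varepsilon_{j,i} = \sum_{K\in\mathcal{T}_H} s_K^\varepsilon(\phiEps{i}, \phiEps{j}-\phiPone{j})$, noting $\Gamma(K,\phiEps{j}) = \Gamma(K,\phiPone{j})$ puts $\phiEps{j}-\phiPone{j}$ in $V_{K,0}$, and invoking~\eqref{eq:msbasis-OS-vf-dofe} with $S_K=K$ to conclude the sum vanishes, with identical right-hand sides $F(\phiPone{j})$ forcing the two methods to coincide. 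Your closing remark about the hypothesis $s_K^\varepsilon = a_K^\varepsilon$ being essential (e.g.~failing for advection-diffusion) is a correct and apt observation, consistent with the paper's discussion.
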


We will specify for all results in this section to which specific MsFEMs they apply among the MsFEM-lin and the MsFEM-CR, with or without oversampling. Lemmas~\ref{lem:diffusion-eff-coer}, \ref{lem:MsFEM-Vxy-no-zero-char} and \ref{lem:estim-Gal-PG-H} are generalizations of results in~\cite{biezemans_non-intrusive_2023}, where the MsFEM-lin without oversampling is considered.

\subsection{Convergence results}
\label{sec:compare-gal-pg-estim}
We estimate here the difference between the solutions to the (intrusive) Galerkin approximation~\eqref{eq:gen-MsFEM-OS} and the non-intrusive MsFEM~\eqref{eq:diffusion-gen-MsFEM-noni}, which coincides with the Petrov-Galerkin MsFEM~\eqref{eq:diffusion-gen-MsFEM-testP1}. We first show coercivity of the effective diffusion tensor~$\overline{A}$. 

\begin{lemma}
\label{lem:diffusion-eff-coer}
	Consider the MsFEM-lin or the MsFEM-CR, without oversampling, or the MsFEM-CR with \DOF-continuous oversampling. The effective tensor~$\overline{A}$ defined by~\eqref{eq:diffusion-eff-msfem-gal} with the appropriate numerical correctors satisfies
	\begin{equation*}
		\forall \, \xi \in \bbR^d,
		\quad
		m |\xi|^2 \leq \xi \cdot \overline{A} \, \xi.
	\end{equation*}
	Here, $m$ is the same coercivity constant as in~\eqref{ass:bounds}.
\end{lemma}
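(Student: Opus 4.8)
The plan is to recognize the quadratic form $\xi \cdot \overline{A}\vert_K\, \xi$ as the energy of a single ``corrected'' affine function and then to apply the coercivity of $A^\varepsilon$ from~\eqref{ass:bounds}, the only remaining issue being a cross term that must be shown to vanish. Fix $\xi \in \bbR^d$ with components $\xi_1,\dots,\xi_d$ and a mesh element $K\in\mathcal{T}_H$. First I would set $w \coloneqq \sum_{\alpha=1}^d \xi_\alpha \VK{\alpha}$, using the numerical correctors appropriate to the variant at hand (restricted to~$K$), and $\Phi_\xi \coloneqq \sum_{\alpha=1}^d \xi_\alpha (x^\alpha + \VK{\alpha})$, so that $\nabla \Phi_\xi = \xi + \nabla w$. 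Bilinearity of $a_K^{\varepsilon,\dif}$ together with the definition~\eqref{eq:diffusion-eff-msfem-gal} then gives
\begin{equation*}
	\xi \cdot \overline{A}\vert_K\, \xi
	=
	\frac{1}{|K|}\, a_K^{\varepsilon,\dif}(\Phi_\xi, \Phi_\xi)
	=
	\frac{1}{|K|} \int_K \nabla \Phi_\xi \cdot A^\varepsilon \nabla \Phi_\xi.
\end{equation*}
The coercivity hypothesis in~\eqref{ass:bounds} yields $\xi \cdot \overline{A}\vert_K\, \xi \geq \frac{m}{|K|} \int_K |\xi + \nabla w|^2$, and expanding the square,
\begin{equation*}
	\int_K |\xi + \nabla w|^2
	=
	|K|\,|\xi|^2 + 2\, \xi \cdot \int_K \nabla w + \int_K |\nabla w|^2 .
\end{equation*}
Since $\int_K |\nabla w|^2 \geq 0$, it then suffices to prove that the cross term $\int_K \nabla w$ vanishes, for then $\xi\cdot\overline{A}\vert_K\,\xi \geq m|\xi|^2$, which is the claim (the bound holding elementwise for all $\xi$).

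The hard part will be showing that $\int_K \nabla w = \int_{\partial K} w\,\vec{n}$ vanishes, and this is precisely where the restriction on the admissible MsFEM variants enters. For the MsFEM-lin without oversampling, the relevant sampling test space is $H^1_0(K)$ (Example~\ref{ex:msfem-lin-dofe}), so $w$ vanishes on $\partial K$ and the boundary integral is zero outright. For the MsFEM-CR, whether without oversampling or with \DOF-continuous oversampling, the function $w$ has vanishing mean over every face of~$K$: this is the defining property of $V_{K,0}$ in Example~\ref{ex:msfem-cr-dofe}, respectively the face condition imposed on $\mathcal{F}(K)$ in Example~\ref{ex:msfem-cr-dofc}. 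Since $\vec{n}$ is constant on each planar face $e\in\mathcal{F}(K)$ of the simplex, one obtains componentwise $\int_{\partial K} w\,n_j = \sum_{e\in\mathcal{F}(K)} n_{e,j}\int_e w = 0$, so that $\int_K \nabla w = 0$ again. In both situations the cross term drops out and the estimate follows.

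This argument also clarifies why the statement excludes the \DOF-extended variants and the MsFEM-lin with oversampling (recall Remark~\ref{rem:DOFec-noOS}): there the numerical corrector is controlled only through degrees of freedom on $\partial S_K$, or only at the vertices of~$K$, and need neither vanish nor average to zero on all of $\partial K$, so $\int_K \nabla w$ need not vanish and coercivity with the \emph{same} constant $m$ cannot be obtained by this route. I do not expect any further subtlety beyond the cross-term computation; the reduction to a single corrected function via bilinearity and the application of~\eqref{ass:bounds} are routine.
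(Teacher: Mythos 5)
Your proof is correct and follows essentially the same route as the paper's: you write $\xi\cdot\overline{A}\vert_K\,\xi$ as $\frac{1}{|K|}\,a_K^{\varepsilon,\dif}(\Phi_\xi,\Phi_\xi)$ with a single corrected function (the paper's $\chi^\xi$ is your $w$), apply the coercivity bound from~\eqref{ass:bounds}, drop the nonnegative term $\int_K |\nabla w|^2$, and kill the cross term via the divergence theorem together with the vanishing trace (MsFEM-lin) or the zero face-averages against a constant normal (MsFEM-CR, with or without \DOF-continuous oversampling). Your closing remark on why the \DOF-extended variants and the MsFEM-lin with oversampling are excluded matches the paper's own remark following the lemma.
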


\begin{proof}
Let $\xi = (\xi_1,\dots,\,\xi_d) \in \bbR^d$, and let $K$ be any simplex of the mesh $\mesh$. We have
\begin{equation*}
    |K| \, \xi \cdot \left. \overline{A} \right\vert_K \xi
    =
    \sum_{\alpha,\beta=1}^d a^{\varepsilon,\dif}_K \left(
    \xi_\alpha \left(x^\alpha + \VK{\alpha}\right),\,
    \xi_\beta \left(x^\beta + \VK{\beta} \right)
    \right)
    = 
    \int_K (\xi + \nabla \chi^\xi) \cdot A^\varepsilon (\xi + \nabla \chi^\xi),
\end{equation*}
denoting by $\chi^\xi$ the function $\displaystyle \chi^\xi = \sum_{\alpha=1}^d \xi_\alpha \VK{\alpha}$. Using~\eqref{ass:bounds}, we obtain
\begin{equation*}
    |K| \, \xi \cdot \left. \overline{A} \right\vert_K \xi
	\geq 
    m \int_K \left\vert \xi + \nabla \chi^\xi \right\rvert^2
	\geq 
    m\,|K| \, |\xi|^2 + 
    2 m\,\int_K \xi \cdot \nabla \chi^\xi.
\end{equation*}
Using an integration by parts, we see that $\displaystyle \int_K \xi \cdot \nabla \chi^\xi = \int_{\partial K} \chi^\xi \, n\cdot\xi$, where~$n$ is the unit outward normal vector on~$\partial K$.
In the case of the MsFEM-lin, the function~$\chi^\xi$ vanishes on~$\partial K$. In the case of the MsFEM-CR with \DOF-continuous oversampling, or without oversampling, the function~$\chi^\xi$ has average zero on each face of~$K$. Since the factor~$n\cdot\xi$ is constant on each face, the integral again vanishes. In conclusion, we have~$\displaystyle \int_K \xi \cdot \nabla \chi^\xi = 0$.

We thus obtain the inequality
$    \xi \cdot \left. \overline{A} \right\vert_K \xi 
    \geq m |\xi|^2.
$
Since $K \in \mesh$ is arbitrary here, this shows coercivity of $\overline{A}$ and completes the proof.
\end{proof}

Coercivity of the effective tensor~$\overline{A}$ implies coercivity of the bilinear form~$\overline{a}^\dif$ on~$H^1_0(\Omega)$. By an application of the Lax-Milgram Theorem, we conclude that the (continuous) effective problem~\eqref{eq:diffusion-pde-effective} is well-posed for the MsFEM-lin and the MsFEM-CR without oversampling, and for the MsFEM-CR with \DOF-continuous oversampling. 

\begin{remark}
	The proof of the above lemma does not extend to the MsFEM-lin with oversampling, because there is no global information about~$\VK{\alpha}$ on the faces of~$K$.
\end{remark}

The following lemma provides a variational characterization of the bijection~\eqref{eq:diffusion-MsFEM-Vxy}.

\begin{lemma}
\label{lem:MsFEM-Vxy-no-zero-char}
	Consider the MsFEM-lin or the MsFEM-CR, both without oversampling. Let $v_H^\varepsilon\in V_H^\varepsilon$. The unique $v_H \in V_H$ for which~\eqref{eq:diffusion-MsFEM-Vxy} holds, is the unique solution in~$V_H$ to the problem
\begin{equation}
	\forall \, w_H \in V_H,
	\quad
	\overline{a}^\dif{\left( v_H,w_H \right)}
	=
	a^{\varepsilon,\dif}{\left( v_H^\varepsilon,w_H \right)}.
	\label{eq:diffusion-MsFEM-Vxy-var-char}
\end{equation}
In addition, we have, with the constants~$m$ and $M$ from~\eqref{ass:bounds}, the estimate
\begin{equation*}
	\lVert \nabla v_H \rVert_{L^2(\mesh)} 
	\leq
	\frac{M}{m} \lVert \nabla v_H^\varepsilon \lVert_{L^2(\mesh)}.
\end{equation*}
\end{lemma}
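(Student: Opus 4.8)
The plan is to establish the variational identity~\eqref{eq:diffusion-MsFEM-Vxy-var-char} first, since both the characterization of~$v_H$ and the norm estimate follow from it. I would work element by element and exploit that any test function $w_H \in V_H$ is piecewise affine, so that $\nabla w_H$ is constant on each $K \in \mathcal{T}_H$. At the outset I note that the $v_H \in V_H$ attached to a given $v_H^\varepsilon$ through~\eqref{eq:diffusion-MsFEM-Vxy} exists and is unique, because the map $\phiPone{i} \mapsto \phiEps{i}$ sends the basis of~$V_H$ to the basis of~$V_H^\varepsilon$, so the expansion is a linear bijection (here, in the absence of oversampling, $\VKOS{\alpha}{0} = \VK{\alpha}$ by Rem.~\ref{rem:DOFec-noOS}).

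For the computation, I would use~\eqref{eq:P1-expansion} for $\left.v_H\right\vert_K$ together with the shorthand $\VVK{\alpha} = x^\alpha - x^\alpha_{c,K} + \VK{\alpha}$ from~\eqref{eq:MsFEM-VVxy} to rewrite $\left.v_H^\varepsilon\right\vert_K = v_H(x_{c,K}) + \sum_{\alpha=1}^d \left.(\partial_\alpha v_H)\right\vert_K \, \VVK{\alpha}$. Since the constant part has vanishing gradient and $a_K^{\varepsilon,\dif}$ only sees gradients, bilinearity reduces $a_K^{\varepsilon,\dif}(v_H^\varepsilon, w_H)$ to $\sum_{\alpha,\beta=1}^d \left.(\partial_\alpha v_H)\right\vert_K \, a_K^{\varepsilon,\dif}(\VVK{\alpha}, x^\beta - x^\beta_{c,K}) \, \left.(\partial_\beta w_H)\right\vert_K$, after also expanding $\left.w_H\right\vert_K$ by~\eqref{eq:P1-expansion}. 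The crux is then to recognize that $a_K^{\varepsilon,\dif}(\VVK{\alpha}, x^\beta - x^\beta_{c,K}) = |K| \, \left.\overline{A}_{\beta,\alpha}\right\vert_K$: the definition~\eqref{eq:diffusion-eff-msfem-gal} carries a corrector on \emph{both} arguments, $a_K^{\varepsilon,\dif}(x^\alpha + \VK{\alpha}, x^\beta + \VK{\beta})$, whereas the expression above carries one on a single argument. The two coincide because $\VK{\beta} \in V_{K,0}$, so the corrector equation~\eqref{eq:diffusion-gen-correctors} gives $a_K^{\varepsilon,\dif}(x^\alpha + \VK{\alpha}, \VK{\beta}) = 0$, and $x^\beta$ and $x^\beta - x^\beta_{c,K}$ share the same gradient. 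I expect this identification — essentially a Galerkin orthogonality of the corrector against the sampling test space~$V_{K,0}$ — to be the main conceptual step; for the MsFEM-CR one must additionally recall that $\VK{\beta}$ has vanishing mean on each face of~$K$, which is precisely what places it in $V_{K,0}$. Summing over $K$ and pairing the piecewise constant tensor $\overline{A}$ against the constant gradients of $v_H$ and $w_H$ then yields $a^{\varepsilon,\dif}(v_H^\varepsilon, w_H) = \sum_{K} |K| \, (\left.\nabla w_H\right\vert_K) \cdot \left.\overline{A}\right\vert_K (\left.\nabla v_H\right\vert_K) = \overline{a}^{\dif}(v_H, w_H)$, which is exactly~\eqref{eq:diffusion-MsFEM-Vxy-var-char}.

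For the estimate I would test the identity with $w_H = v_H$. The left-hand side is bounded below by $m \, \lVert \nabla v_H \rVert_{L^2(\mathcal{T}_H)}^2$ by the coercivity of $\overline{A}$ (Lemma~\ref{lem:diffusion-eff-coer}), while the right-hand side $a^{\varepsilon,\dif}(v_H^\varepsilon, v_H) = \sum_{K} \int_K \nabla v_H \cdot A^\varepsilon \nabla v_H^\varepsilon$ is bounded above by $M \, \lVert \nabla v_H \rVert_{L^2(\mathcal{T}_H)} \, \lVert \nabla v_H^\varepsilon \rVert_{L^2(\mathcal{T}_H)}$ using the upper bound in~\eqref{ass:bounds} pointwise followed by a (discrete) Cauchy--Schwarz inequality over the elements. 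Dividing by $\lVert \nabla v_H \rVert_{L^2(\mathcal{T}_H)}$ (the case of a vanishing gradient being trivial) gives the claimed factor $M/m$.

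The one point to handle with care is uniqueness. Testing~\eqref{eq:diffusion-MsFEM-Vxy-var-char} shows that $\overline{a}^{\dif}$ controls only the broken gradient seminorm $\lVert \nabla \cdot \rVert_{L^2(\mathcal{T}_H)}$, whose kernel on~$V_H$ consists of the constants (a function of~$V_H$ with vanishing gradient is piecewise constant, hence constant on~$\Omega$ by the property noted below Def.~\ref{def:underlying-space}). The variational problem~\eqref{eq:diffusion-MsFEM-Vxy-var-char} therefore determines $v_H$ only up to an additive constant, which is in turn fixed by the expansion~\eqref{eq:diffusion-MsFEM-Vxy}; I would phrase the uniqueness accordingly, combining the bijectivity of the expansion map (which selects the correct representative) with the coercivity of~$\overline{A}$ (which excludes any non-constant discrepancy).
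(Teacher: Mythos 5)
Your proof is correct and follows essentially the same route as the paper's: expand $v_H^\varepsilon$ elementwise via~\eqref{eq:P1-expansion}, use the corrector equation~\eqref{eq:diffusion-gen-correctors} with $\VK{\beta}\in V_{K,0}$ to pass between the one-corrector expression $a_K^{\varepsilon,\dif}\left(x^\alpha+\VK{\alpha},\,x^\beta\right)$ and the two-corrector definition~\eqref{eq:diffusion-eff-msfem-gal} of $\overline{A}$, and obtain the estimate by testing with $w_H=v_H$ and combining Lemma~\ref{lem:diffusion-eff-coer} with~\eqref{ass:bounds} and Cauchy--Schwarz (the paper runs this last step element by element and then sums squares, you run it globally; the constant $M/m$ is the same either way). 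The one substantive divergence is your uniqueness caveat, and there you are in fact more careful than the paper: its proof simply invokes the Lax--Milgram theorem, but $\overline{a}^{\dif}$ controls only the broken gradient seminorm on the full space $V_H$, whose kernel contains the constants (on which the right-hand side of~\eqref{eq:diffusion-MsFEM-Vxy-var-char} also vanishes), so~\eqref{eq:diffusion-MsFEM-Vxy-var-char} posed on $V_H$ determines $v_H$ only up to an additive constant; your repair --- uniqueness modulo constants from coercivity, with the constant pinned down by the bijectivity of the expansion~\eqref{eq:diffusion-MsFEM-Vxy} --- is the correct reading. This gap is harmless downstream, since the lemma is applied in Lemma~\ref{lem:estim-gradient-error} with $v_H^\varepsilon=e_H^\varepsilon\in V_{H,0}^\varepsilon$, where only the gradient estimate~\eqref{eq:estim-gradient-error1} is used, but your version of the statement is the one that is literally true on $V_H$.
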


\begin{proof}
	Let~$v_H \in V_H$ be the unique element of~$V_H$ such that $v_H^\varepsilon$ and $v_H$ satisfy~\eqref{eq:diffusion-MsFEM-Vxy}. Take any $w_H \in V_H$. Using that~$\nabla v_H$ and~$\nabla w_H$ are piecewise constant, we compute
	\begin{equation*}
		a^{\varepsilon,\dif}(v_H^\varepsilon,w_H)
		=
		\sum_{K\in\mesh} \sum_{\alpha,\beta=1}^d
			(\partial_\beta w_H)\vert_K \,
			a^{\varepsilon,\dif}_K{ \left( x^\alpha + \VK{\alpha}, x^\beta \right)}
			(\partial_\alpha v_H)\vert_K.
	\end{equation*}
	For the MsFEM without oversampling, the numerical correctors belong to the sampling test space~$V_{K,0}$. We can thus use~\eqref{eq:diffusion-gen-correctors} to obtain
	\begin{equation*}
	\forall \, 1 \leq \alpha, \, \beta \leq d,
	\quad	
	a^{\varepsilon,\dif}_K{ \left( x^\alpha + \VK{\alpha}, x^\beta \right)}
	=
	a^{\varepsilon,\dif}_K{ \left( x^\alpha + \VK{\alpha}, x^\beta + \VK{\beta} \right)}.
	\end{equation*}
	Using the definitions of~$\overline{A}$ in~\eqref{eq:diffusion-eff-msfem-gal} and of~$\overline{a}^{\dif}$ in~\eqref{eq:diffusion-form-effective} (we recall that these expressions hold true here upon replacing the numerical correctors by those under consideration), we conclude that
	\begin{equation*}
		a^{\varepsilon,\dif}{\left( v_H^\varepsilon,w_H \right)}
		=
		\sum_{K\in\mesh} \sum_{\alpha,\beta=1}^d \int_K
			\partial_\beta w_H \,
			\overline{A}_{\beta,\alpha} \,
			\partial_\alpha v_H
		=
		\overline{a}^{\dif}(v_H,w_H).
	\end{equation*}
It follows that~$v_H$ satisfies \eqref{eq:diffusion-MsFEM-Vxy-var-char}. In addition, in view of the coercivity of~$\overline{A}$ established in Lemma~\ref{lem:diffusion-eff-coer} and by the Lax-Milgram Theorem, problem \eqref{eq:diffusion-MsFEM-Vxy-var-char} uniquely characterizes~$v_H$. 

The estimate on~$v_H$ follows by testing the characterization~\eqref{eq:diffusion-MsFEM-Vxy-var-char} against $w_H=v_H$. This yields
\begin{align*}
	m \left\lVert \nabla v_H \right\rVert_{L^2(\mesh)}^2
	\leq 
	\overline{a}^{\dif}(v_H,v_H)
	=
	a^{\varepsilon,\dif}{ \left( v_H^\varepsilon,v_H \right)}
	&=	
	\sum_{K\in\mesh} \int_K \nabla v_H \cdot A^\varepsilon \, \nabla v_H^\varepsilon
	\\
	&\leq 
	M \sum_{K\in\mesh} \left\lVert \nabla v_H \right\rVert_{L^2(K)} \, \left\lVert \nabla v_H^\varepsilon \right\rVert_{L^2(K)}.
\end{align*}	
The first inequality follows from coercivity of~$\overline{A}$ and the second inequality from the upper bound on~$A^\varepsilon$ in~\eqref{ass:bounds} and the Cauchy-Schwarz inequality. 
With a discrete Cauchy-Schwarz inequality, we obtain
\begin{equation*}
	m \left\lVert \nabla v_H \right\rVert_{L^2(\mesh)}^2
	\leq
	M \sum_{K\in\mesh} \left\lVert \nabla v_H \right\rVert_{L^2(K)} \, \left\lVert \nabla v_H^\varepsilon \right\rVert_{L^2(K)}
	\leq 
	M \, \left\lVert \nabla v_H \right\rVert_{L^2(\mesh)} \, \left\lVert \nabla v_H^\varepsilon \right\rVert_{L^2(\mesh)}.
\end{equation*}
The proof is completed upon simplifying by~$\left\lVert \nabla v_H \right\rVert_{L^2(\mesh)}$.
\end{proof}

For the remainder of this section, we consider MsFEMs without oversampling. Let~$u^{\varepsilon,\mathsf{G}}_H$ denote the solution to the MsFEM approximation~\eqref{eq:gen-MsFEM-OS} (we use the superscript~$\mathsf{G}$ to stress that this is a \emph{Galerkin} approximation) and let~$u^{\varepsilon,\mathsf{PG}}_H$ denote the solution to the non-intrusive MsFEM~\eqref{eq:diffusion-gen-MsFEM-noni} (which is equivalent to the \emph{Petrov-Galerkin} MsFEM~\eqref{eq:diffusion-gen-MsFEM-testP1}, since we do not apply the oversampling technique).

We first study the error $u^{\varepsilon,\mathsf{G}}_H - u^{\varepsilon,\mathsf{PG}}_H$ when $\varepsilon\to0$. In this case, we do not need a rate of convergence in~$H$ and we shall relax the condition~$f \in L^2(\Omega)$ to the condition $f \in H^{-1}(\Omega)$. Then the definition of the linear form~$F$ in~\eqref{eq:diffusion-bilin-form} has to be adapted. Given~$f \in H^{-1}(\Omega)$, there exist $f_0,f_1,\dots,f_d \in L^2(\Omega)$ such that 
$
	\displaystyle
	F(v)
	=
	\sum_{K\in\mesh} \left( \int_K f_0 \, v +
		\sum_{\beta=1}^d \int_K f_\beta \, \partial_\beta v
	\right),
$
which is in fact well-defined for any $v \in H^1(\mesh)$ and thus in particular on~$V_H$, the underlying affine space for the MsFEM, and the multiscale space~$V_H^\varepsilon$.

We consider in Lemma~\ref{lem:estim-Gal-PG-eps} a sequence of diffusion tensors~$A^\varepsilon$ that $H$-converges to a constant diffusion tensor. This means that~$u^\varepsilon$ converges weakly in~$H^1(\Omega)$ as $\varepsilon\to0$ towards a function~$u^\star \in H^1_0(\Omega)$, solution to the homogenized problem~\eqref{eq:diffusion-hom-pde}, and $A^\varepsilon\nabla u^\varepsilon \rightharpoonup A^\star \nabla u^\star$ weakly in~$L^2(\Omega)$.

\begin{lemma}
	\label{lem:estim-Gal-PG-eps}
	Consider the MsFEM-lin or the MsFEM-CR, both without oversampling. Suppose that $(A^\varepsilon)_{\varepsilon > 0}$ is a sequence of matrices satisfying~\eqref{ass:bounds} that $H$-converges to a constant matrix. Let~$f \in H^{-1}(\Omega)$. Then $\left\lVert u^{\varepsilon,\mathsf{G}}_H - u^{\varepsilon,\mathsf{PG}}_H \right\rVert_{H^1(\mesh)} \to 0$ as $\varepsilon\to0$. 
\end{lemma}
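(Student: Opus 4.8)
The plan is to transport the comparison from the multiscale spaces to the underlying $\Pone$ space $V_{H,0}$, where both methods are governed by the \emph{same} effective bilinear form $\overline{a}^{\dif}$ but differ only in their right-hand sides, and then to show that this discrepancy between right-hand sides is controlled entirely by the numerical correctors, which vanish in the limit $\varepsilon\to0$.

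First I would record two effective problems. By construction (Lemma~\ref{lem:stiffness-MsFEM-P1} and its generalization, together with the equivalence of Lemma~\ref{lem:IBP-bubbles-diffusion}), the non-intrusive MsFEM~\eqref{eq:diffusion-gen-MsFEM-noni} amounts to solving the effective $\Pone$ problem~\eqref{eq:diffusion-FEM-effective}; writing $u^{\varepsilon,\mathsf{PG}}_H$ as the reconstruction~\eqref{eq:diffusion-MsFEM-Vxy} of its underlying function $u_H^{\mathsf{PG}}\in V_{H,0}$, we have $\overline{a}^{\dif}(u_H^{\mathsf{PG}},v_H)=F(v_H)$ for all $v_H\in V_{H,0}$. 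For the Galerkin method, let $u_H^{\mathsf{G}}\in V_{H,0}$ be the underlying function of $u^{\varepsilon,\mathsf{G}}_H$ and let $v_H^\varepsilon$ denote the reconstruction of an arbitrary $v_H\in V_{H,0}$. Testing~\eqref{eq:gen-MsFEM-OS} against $v_H^\varepsilon$, I would use first the orthogonality behind~\eqref{eq:IBP-bubbles} (namely $a^{\varepsilon,\dif}_K(\phiEps{i},w)=0$ for $w\in V_{K,0}$, which gives $a^{\varepsilon,\dif}(u^{\varepsilon,\mathsf{G}}_H,v_H^\varepsilon)=a^{\varepsilon,\dif}(u^{\varepsilon,\mathsf{G}}_H,v_H)$ since $v_H^\varepsilon-v_H$ is a combination of correctors lying in the spaces $V_{K,0}$) and then the variational characterization of Lemma~\ref{lem:MsFEM-Vxy-no-zero-char}, to obtain $\overline{a}^{\dif}(u_H^{\mathsf{G}},v_H)=F(v_H^\varepsilon)$ for all $v_H\in V_{H,0}$. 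Subtracting, with $e_H:=u_H^{\mathsf{G}}-u_H^{\mathsf{PG}}$ and its reconstruction $e_H^\varepsilon=u^{\varepsilon,\mathsf{G}}_H-u^{\varepsilon,\mathsf{PG}}_H$, yields
\begin{equation*}
    \overline{a}^{\dif}(e_H,v_H)=F(v_H^\varepsilon-v_H)\quad\text{for all }v_H\in V_{H,0}.
\end{equation*}

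Next I would take $v_H=e_H$ and use the coercivity of $\overline{A}$ from Lemma~\ref{lem:diffusion-eff-coer}, so that $m\lVert\nabla e_H\rVert_{L^2(\mathcal{T}_H)}^2\leq F(e_H^\varepsilon-e_H)$. Since $e_H^\varepsilon-e_H=\sum_{K}\sum_{\alpha=1}^d(\partial_\alpha e_H)\vert_K\,\VK{\alpha}$, expressing $F$ through its representatives $f_0,\dots,f_d\in L^2(\Omega)$ gives $F(e_H^\varepsilon-e_H)=\sum_{K}\sum_\alpha(\partial_\alpha e_H)\vert_K\,G_K^\alpha$, where $G_K^\alpha=\int_K f_0\,\VK{\alpha}+\sum_{\beta=1}^d\int_K f_\beta\,\partial_\beta\VK{\alpha}$. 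A Cauchy--Schwarz inequality over the pairs $(K,\alpha)$, using $\sum_{K,\alpha}\lvert(\partial_\alpha e_H)\vert_K\rvert^2|K|=\lVert\nabla e_H\rVert_{L^2(\mathcal{T}_H)}^2$, then yields $m\lVert\nabla e_H\rVert_{L^2(\mathcal{T}_H)}\leq\bigl(\sum_{K,\alpha}|G_K^\alpha|^2/|K|\bigr)^{1/2}$. The decisive input is Lemma~\ref{lem:corr-to-0}: the $H$-convergence of $A^\varepsilon$ to a constant matrix forces $\VK{\alpha}\rightharpoonup0$ weakly in $H^1(K)$ for each fixed $K$, hence $\VK{\alpha}\to0$ strongly in $L^2(K)$ and $\nabla\VK{\alpha}\rightharpoonup0$ weakly in $L^2(K)$. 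Tested against the fixed data $f_0,\dots,f_d$, this makes every $G_K^\alpha\to0$; as $H$ is fixed there are finitely many terms, so the right-hand side tends to $0$ and $\lVert\nabla e_H\rVert_{L^2(\mathcal{T}_H)}\to0$.

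Finally I would transport this back to the multiscale function. As $H$ is fixed, $\lVert\nabla\,\cdot\,\rVert_{L^2(\mathcal{T}_H)}$ is a norm on the finite-dimensional space $V_{H,0}$ (a function of $V_{H,0}$ with piecewise constant gradient is constant, and with vanishing boundary degrees of freedom it is zero), so $\lVert e_H\rVert_{H^1(\mathcal{T}_H)}\leq C_H\lVert\nabla e_H\rVert_{L^2(\mathcal{T}_H)}\to0$. Moreover the correctors are bounded in $H^1(K)$ uniformly in $\varepsilon$ (directly from the corrector problem~\eqref{eq:diffusion-gen-correctors}, or as a consequence of their weak convergence), whence $\lVert e_H^\varepsilon-e_H\rVert_{H^1(\mathcal{T}_H)}\leq C\lVert\nabla e_H\rVert_{L^2(\mathcal{T}_H)}$ as well; combining the two estimates gives $\lVert u^{\varepsilon,\mathsf{G}}_H-u^{\varepsilon,\mathsf{PG}}_H\rVert_{H^1(\mathcal{T}_H)}=\lVert e_H^\varepsilon\rVert_{H^1(\mathcal{T}_H)}\to0$. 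I expect the main obstacle to be the term $\int_K f_\beta\,\partial_\beta\VK{\alpha}$ arising from the $H^{-1}$ right-hand side: unlike the corrector itself, its gradient converges to $0$ only \emph{weakly} in $L^2$, so one genuinely needs the weak-convergence statement of Lemma~\ref{lem:corr-to-0} (strong $L^2$ convergence of $\VK{\alpha}$ alone would not suffice), and this is precisely where $H$-convergence to a constant tensor is indispensable.
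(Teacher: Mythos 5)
Your proposal is correct and follows essentially the paper's proof: your effective-form identity $\overline{a}^{\dif}(e_H,e_H)=F(e_H^\varepsilon-e_H)$ is exactly the paper's key identity~\eqref{eq:estim-Gal-PG-Cea} from Lemma~\ref{lem:estim-gradient-error} (rewritten through Lemma~\ref{lem:MsFEM-Vxy-no-zero-char} and the orthogonality~\eqref{eq:IBP-bubbles}), and the limit is obtained, as in the paper, from the weak convergence of the numerical correctors in Lemma~\ref{lem:corr-to-0}. The only departure is bookkeeping: where the paper bounds the coefficients $\left.\left(\partial_\alpha e^\Pone_H\right)\right\vert_K$ uniformly in $\varepsilon$ via the a priori estimates~\eqref{eq:estim-gradient-error1}--\eqref{eq:estim-gradient-error2}, you absorb $\lVert \nabla e_H \rVert_{L^2(\mathcal{T}_H)}$ by a weighted Cauchy--Schwarz inequality and transport back to the multiscale error with $H$-dependent constants, which is equally valid here since the lemma claims no rate in~$H$.
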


\begin{remark}
	\label{rem:estim-eps-rate}
	A rate of convergence can be obtained under some additional structural assumptions on~$A^\varepsilon$; see Lemma~\ref{lem:estim-Gal-PG-per}.
\end{remark}

We need a few auxiliary results to establish Lemma~\ref{lem:estim-Gal-PG-eps}. The first result below concerns the convergence of the numerical correctors as~$\varepsilon\to0$.

\begin{lemma}
	Suppose that~$A^\varepsilon$ $H$-converges to a constant homogenized matrix~$A^\star$. Consider the MsFEM-lin or the MsFEM-CR, both without oversampling. Then, for all $K\in\mesh$ and all $1\leq\alpha\leq d$, we have $\VK{\alpha} \rightharpoonup 0$ weakly in~$H^1(K)$ as~$\varepsilon\to0$.
	\label{lem:corr-to-0}
\end{lemma}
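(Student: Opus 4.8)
The plan is to argue by compactness: I will show that every subsequence of $(\VK{\alpha})_{\varepsilon>0}$ has a further subsequence converging weakly in $H^1(K)$ to the \emph{same} limit $0$, which forces the whole family to converge weakly to $0$. First I would establish a uniform $H^1(K)$ bound. Testing the corrector equation~\eqref{eq:diffusion-gen-correctors} with $w=\VK{\alpha}$ (admissible since $\VK{\alpha}\in V_{K,0}$) and using~\eqref{ass:bounds} gives $m\,\lVert\nabla\VK{\alpha}\rVert_{L^2(K)}^2 \leq -a_K^{\varepsilon,\dif}(x^\alpha,\VK{\alpha}) \leq M\,|K|^{1/2}\,\lVert\nabla\VK{\alpha}\rVert_{L^2(K)}$, so $\lVert\nabla\VK{\alpha}\rVert_{L^2(K)}\leq (M/m)\,|K|^{1/2}$ uniformly in $\varepsilon$. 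A Poincaré inequality then controls $\lVert\VK{\alpha}\rVert_{L^2(K)}$: for the MsFEM-lin one uses the standard inequality on $H^1_0(K)$, and for the MsFEM-CR the Poincaré--Wirtinger inequality for functions with vanishing average on each face of $K$. Hence $(\VK{\alpha})$ is bounded in $H^1(K)$ and one extracts $\VK{\alpha}\rightharpoonup V^\star$ weakly in $H^1(K)$ along a subsequence.

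Next I would identify the limit via the div-curl property of $H$-convergence. Setting $w^\varepsilon:=x^\alpha+\VK{\alpha}$, so that $\nabla w^\varepsilon=e_\alpha+\nabla\VK{\alpha}$, the corrector equation rewrites as $a_K^{\varepsilon,\dif}(w^\varepsilon,w)=0$ for all $w\in V_{K,0}$; since $H^1_0(K)\subseteq V_{K,0}$ in both cases, this says $-\operatorname{div}(A^\varepsilon\nabla w^\varepsilon)=0$ in $K$. Because $H$-convergence is a local notion, $A^\varepsilon$ also $H$-converges to the constant $A^\star$ on the subdomain $K$, so from $w^\varepsilon\rightharpoonup w^\star:=x^\alpha+V^\star$ and the (trivially convergent) vanishing right-hand side I obtain $A^\varepsilon\nabla w^\varepsilon\rightharpoonup A^\star\nabla w^\star$ weakly in $L^2(K)$ with $-\operatorname{div}(A^\star\nabla w^\star)=0$ in $K$ (see e.g.~\cite{murat_h-convergence_1997, allaire_shape_2002}). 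Passing to the limit in $a_K^{\varepsilon,\dif}(w^\varepsilon,w)=0$ for each \emph{fixed} $w\in V_{K,0}$ then yields $\int_K\nabla w\cdot A^\star\nabla w^\star=0$ for all $w\in V_{K,0}$.

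It remains to pin down the degrees of freedom of $w^\star$ and invoke uniqueness. For the MsFEM-lin, $V^\star$ is the weak limit of functions in the weakly closed subspace $H^1_0(K)$, hence $V^\star\in H^1_0(K)$ and $w^\star=x^\alpha$ on $\partial K$. For the MsFEM-CR, the trace map is weakly continuous from $H^1(K)$ into $H^{1/2}(\partial K)$, which embeds compactly into $L^2(\partial K)$, so the face averages $\Gamma^{CR}(K,\cdot)$ pass to the limit and $\Gamma^{CR}(K,w^\star)=\Gamma^{CR}(K,x^\alpha)$, i.e.\ $V^\star\in V_{K,0}$. In either case the affine function $x^\alpha$ solves the \emph{same} limit problem: $A^\star$ being constant, $-\operatorname{div}(A^\star\nabla x^\alpha)=-\operatorname{div}(A^\star e_\alpha)=0$, and for the CR test space $\int_K\nabla w\cdot A^\star e_\alpha=\sum_{e\in\mathcal{F}(K)}(A^\star e_\alpha)\cdot n\int_e w=0$ by integration by parts and the vanishing face averages of $w\in V_{K,0}$. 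Since $A^\star$ is coercive, the limit problem is uniquely solvable (the difference of two solutions lies in $V_{K,0}$, has zero energy, hence is constant with vanishing degrees of freedom), so $w^\star=x^\alpha$ and $V^\star=0$. As the limit is independent of the subsequence, $\VK{\alpha}\rightharpoonup0$ weakly in $H^1(K)$.

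The main obstacle is the passage to the limit in the oscillatory product $\nabla w\cdot A^\varepsilon\nabla w^\varepsilon$: the natural right-hand side $\operatorname{div}(A^\varepsilon e_\alpha)$ of the corrector problem does not converge in $H^{-1}(K)$, and this is circumvented precisely by absorbing $x^\alpha$ into $w^\varepsilon$ so that the div-curl machinery of $H$-convergence applies with a vanishing source term. The second delicate point, specific to the MsFEM-CR, is transferring the identification of the limit from the $H^1$ level to the face averages, which relies on the compactness of the trace embedding $H^{1/2}(\partial K)\hookrightarrow L^2(\partial K)$.
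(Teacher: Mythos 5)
Your proposal is correct and takes essentially the same route as the paper's proof: both absorb $x^\alpha$ into the corrector, i.e.\ set $\tau^{\varepsilon,\alpha}=x^\alpha+\VK{\alpha}$ so that $-\operatorname{div}\left(A^\varepsilon\nabla\tau^{\varepsilon,\alpha}\right)=0$ in~$K$, pass to the $H$-convergence limit, and identify the limit as $x^\alpha$ via uniqueness of the constant-coefficient limit problem with the respective Dirichlet (lin) or face-average/constant-flux (CR) boundary data. Your writeup merely makes explicit the steps the paper leaves implicit — the uniform energy bound, the subsequence extraction, the locality of $H$-convergence on~$K$, and the preservation of the boundary conditions under weak limits — all of which are carried out correctly.
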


\begin{proof}
	We introduce for each~$\alpha=1,\dots,d$ the function~$\tau^{\varepsilon,\alpha} = x^\alpha + \VK{\alpha}$. Then~\eqref{eq:diffusion-gen-correctors} implies the equation
	$
		-{\operatorname{div}(A^\varepsilon \nabla \tau^{\varepsilon,\alpha})}
		=
		0
	$ in~$K$.
	For the MsFEM-lin, the boundary conditions of the local problems for~$\VK{\alpha}$ (see~\eqref{eq:msfem-lin-dofe} with $S_K=K$) lead to~$\tau^{\varepsilon,\alpha}=x^\alpha$ on~$\partial K$. The boundary conditions associated to the MsFEM-CR follow from~\eqref{eq:msfem-cr-dofe} and are as follows: the flux $\vec{n}\cdot A^\varepsilon \nabla \tau^{\varepsilon,\alpha}$ is constant on each face of~$K$ (but may depend on~$\varepsilon$) and $\displaystyle \int_h \tau^{\varepsilon,\alpha} = \int_h x^\alpha$ for all faces~$h$ of~$K$.

	It follows that the homogenized limit~$\tau^{\star,\alpha}$ of~$\tau^{\varepsilon,\alpha}$ satisfies the equation
	$
		-{\operatorname{div}(A^\star \nabla \tau^{\star,\alpha})}
		=
		0
	$ in~$K$.
	For the MsFEM-lin, the boundary condition for the homogenized problem is~$\tau^{\star,\alpha}=x^\alpha$ on~$\partial K$. The boundary conditions associated to the MsFEM-CR are a constant flux $\vec{n}\cdot A^\star \nabla \tau^{\alpha,\star}$ on each face of~$K$ and $\displaystyle \int_h \tau^{\star,\alpha} = \int_h x^\alpha$ for all faces~$h$ of~$K$.
	
	Both for the MsFEM-lin and the MsFEM-CR, the homogenized equation has a unique solution, which is easily seen to be~$\tau^{\star,\alpha} = x^\alpha$, because~$A^\star$ is constant. Therefore, $\tau^{\star,\alpha} \rightharpoonup x^\alpha$ weakly in~$H^1(K)$. Subtracting the function~$x^\alpha$, we deduce the desired convergence.
\end{proof}

We will also use the following result, which is a straightforward generalization of the extended Poincaré inequality in~\cite[Lemma 3.31]{ern_theory_2004}. 

\begin{lemma}
\label{lem:poincare-extended}
Let~$W$ be the subspace of~$H^1(\mesh)$ defined by 
\begin{equation*}
	W = \left\{
		v \in H^1(\mesh) \, \left\vert \, 
		\int_h \llbracket v \rrbracket = 0 \right. \text{ for each face } h \text{ of } \fmesh, \,
		\int_h v = 0 \text{ for each face } h \subset \partial \Omega
	\right\}.
\end{equation*}
There exists a constant $C>0$ depending only on~$\Omega$ but not on~$H$ such that
\begin{equation*}
	\forall \, v \in W,
	\qquad
	\lVert v \rVert_{L^2(\Omega)} \leq C \, \lVert \nabla v \rVert_{L^2(\mesh)}.
\end{equation*}
\end{lemma}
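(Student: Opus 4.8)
The plan is to deduce the inequality on the broken space $W$ from the corresponding inequality on the Crouzeix--Raviart finite element space, which is exactly the content of~\cite[Lemma 3.31]{ern_theory_2004}. Given $v \in W$, I would introduce its Crouzeix--Raviart interpolant $v_{CR}$, the unique element of $V_H^{CR}$ (see Def.~\ref{def:underlying-space}) whose face averages match those of $v$, that is $\frac{1}{|h|}\int_h v_{CR} = \frac{1}{|h|}\int_h v$ for every face $h$ of $\mathcal{T}_H$. This is well defined precisely because $v \in W$: the condition $\int_h \llbracket v\rrbracket = 0$ guarantees that the two one-sided face averages agree on interior faces, so $v_{CR}$ is single valued in the Crouzeix--Raviart sense, while $\int_h v = 0$ on boundary faces places $v_{CR}$ in the homogeneous Crouzeix--Raviart space to which the cited discrete Poincaré inequality applies, giving $\lVert v_{CR}\rVert_{L^2(\Omega)} \le C\,\lVert\nabla v_{CR}\rVert_{L^2(\mathcal{T}_H)}$ with $C$ independent of $H$.

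It then remains to control $v - v_{CR}$ and $\nabla v_{CR}$ by the broken gradient of $v$. For the latter I would invoke the standard identity that the (constant) gradient of the Crouzeix--Raviart interpolant on each $K$ equals the element mean of $\nabla v$, whence $\lVert\nabla v_{CR}\rVert_{L^2(K)} \le \lVert\nabla v\rVert_{L^2(K)}$ by the Cauchy--Schwarz inequality, and summing over $K$ yields $\lVert\nabla v_{CR}\rVert_{L^2(\mathcal{T}_H)} \le \lVert\nabla v\rVert_{L^2(\mathcal{T}_H)}$. For the former, note that $w \coloneqq (v - v_{CR})\vert_K$ has vanishing average on every face of $K$, so a scaled local Poincaré inequality (obtained by the Deny--Lions argument on the reference simplex followed by scaling) gives $\lVert w\rVert_{L^2(K)} \le C\,H_K\,\lVert\nabla w\rVert_{L^2(K)}$ with $C$ depending only on the shape regularity of $K$. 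Combining this with the gradient stability and using $H_K \le \operatorname{diam}(\Omega)$, I obtain $\lVert v - v_{CR}\rVert_{L^2(\Omega)} \le C\,\operatorname{diam}(\Omega)\,\lVert\nabla v\rVert_{L^2(\mathcal{T}_H)}$. The triangle inequality $\lVert v\rVert_{L^2(\Omega)} \le \lVert v - v_{CR}\rVert_{L^2(\Omega)} + \lVert v_{CR}\rVert_{L^2(\Omega)}$ then closes the estimate.

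The only delicate point is the $H$-independence of the constant, which is why a plain compactness (Rellich) argument on $W$ is insufficient: the space $W$ itself varies with the mesh, so compactness in a \emph{fixed} space cannot deliver a uniform bound. Mesh independence is instead secured by isolating all mesh dependence into the factor $H_K$ of the local Poincaré inequality, which is harmless since $H_K$ is bounded by $\operatorname{diam}(\Omega)$, and otherwise relying only on two genuinely $H$-robust ingredients, namely the gradient stability of the Crouzeix--Raviart interpolant and the cited discrete Poincaré inequality. Shape regularity of $\mathcal{T}_H$ enters solely through the constant in the local Poincaré inequality; under the mesh assumptions already in force in the paper this is absorbed into a single constant depending only on $\Omega$, as claimed.
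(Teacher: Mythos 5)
Your proof is correct and is essentially the argument the paper intends: the paper gives no proof of this lemma, stating only that it is a ``straightforward generalization'' of the discrete Poincar\'e inequality of \cite[Lemma 3.31]{ern_theory_2004}, and your reduction via the Crouzeix--Raviart interpolant (well defined on interior faces because the jump conditions equalize the one-sided face averages, gradient-stable because $\nabla v_{CR}\vert_K$ is the element mean of $\nabla v$, with the remainder $v-v_{CR}$ controlled by a scaled local Poincar\'e inequality using its vanishing face averages) is exactly the standard way to carry out that generalization from $\Pone(\mathcal{T}_H)$ to $H^1(\mathcal{T}_H)$. The only caveat, which you already flag yourself, is that the constant inherits a dependence on the shape regularity of the mesh family (both in the local Poincar\'e step and in the cited discrete inequality), a dependence the paper's phrase ``depending only on $\Omega$'' tacitly subsumes under its standing mesh assumptions.
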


Note that the multiscale space~$V_{H,0}^\varepsilon$ is contained in~$W$ for both the MsFEM-lin and the MsFEM-CR, without oversampling. Finally, we provide a number of useful bounds for the difference between~$u^{\varepsilon,\mathsf{G}}_H$ and~$u^{\varepsilon,\mathsf{PG}}_H$.

\begin{lemma}
	Let $f \in H^{-1}(\Omega)$ and consider the MsFEM-lin or the MsFEM-CR, both without oversampling. Let $e^\varepsilon_H = u^{\varepsilon,\mathsf{G}}_H - u^{\varepsilon,\mathsf{PG}}_H$. There exists a unique $e_H^\Pone\in V_H$ and a linear combination of the numerical correctors, that we denote by $e_H^{\mathsf{osc}}$, such that $e^\varepsilon_H = e^\Pone_H + e^\mathsf{osc}_H$, and it holds, with the constants $m,M$ from~\eqref{ass:bounds} and the constant~$C$ from Lemma~\ref{lem:poincare-extended},
	\begin{align}
		a^{\varepsilon,\dif}{\left(e^\varepsilon_H, \, e^\varepsilon_H \right)}
		&=
		F(e^\mathsf{osc}_H),
		\label{eq:estim-Gal-PG-Cea}\\
		\left\lVert \nabla e^\mathsf{osc}_H \right\rVert_{L^2(K)}
		&\leq
		\frac{M}{m} \left\lVert \nabla e^\Pone_H \right\rVert_{L^2(K)}
		\quad 
		\text{for all } K \in \mesh,
		\label{eq:estim-gradient-error3}\\
		\left\lVert \nabla e^\mathsf{\Pone}_H \right\rVert_{L^2(\mesh)}
		&\leq
		\frac{M}{m} \left\lVert \nabla e^\varepsilon_H \right\rVert_{L^2(\mesh)},
		\label{eq:estim-gradient-error1}\\
		\left\lVert \nabla e^\varepsilon_H \right\rVert_{L^2(\mesh)}
		&\leq
		\sqrt{1+C^2} \frac{M^2}{m^3} \lVert F \rVert_{\mathcal{L}(H^1(\mesh))},
		\label{eq:estim-gradient-error2}
	\end{align}
	where~$\lVert \cdot \rVert_{\mathcal{L}(H^1(\mesh))}$ is the operator norm on~$\mathcal{L}(H^1(\mesh))$.
	\label{lem:estim-gradient-error}
\end{lemma}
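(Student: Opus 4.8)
The plan is to treat \eqref{eq:estim-Gal-PG-Cea} as the core energy identity and to deduce the three remaining estimates from it by coercivity, continuity and the extended Poincar\'e inequality. Throughout I recall that the Galerkin solution obeys $a^{\varepsilon,\dif}(u^{\varepsilon,\mathsf{G}}_H, v_H^\varepsilon) = F(v_H^\varepsilon)$ for all $v_H^\varepsilon \in V_{H,0}^\varepsilon$, while the non-intrusive solution, being equivalent to the Petrov-Galerkin MsFEM \eqref{eq:diffusion-gen-MsFEM-testP1} by Lemma~\ref{lem:IBP-bubbles-diffusion}, obeys $a^{\varepsilon,\dif}(u^{\varepsilon,\mathsf{PG}}_H, w_H) = F(w_H)$ for all $w_H \in V_{H,0}$. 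First I would fix the decomposition: since both solutions lie in $V_{H,0}^\varepsilon$, so does $e^\varepsilon_H$, and the bijection \eqref{eq:diffusion-MsFEM-Vxy}, whose uniqueness is guaranteed by Lemma~\ref{lem:MsFEM-Vxy-no-zero-char}, yields a unique $e^\Pone_H \in V_{H,0}$ with $e^\varepsilon_H = e^\Pone_H + e^\mathsf{osc}_H$, where $e^\mathsf{osc}_H = \sum_{K\in\mathcal{T}_H} \sum_{\alpha=1}^d (\partial_\alpha e^\Pone_H)\vert_K \, \VK{\alpha}$ is the announced linear combination of numerical correctors.

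To prove \eqref{eq:estim-Gal-PG-Cea}, I would expand $a^{\varepsilon,\dif}(e^\varepsilon_H, e^\varepsilon_H) = a^{\varepsilon,\dif}(u^{\varepsilon,\mathsf{G}}_H, e^\varepsilon_H) - a^{\varepsilon,\dif}(u^{\varepsilon,\mathsf{PG}}_H, e^\varepsilon_H)$. Testing the Galerkin equation against $e^\varepsilon_H \in V_{H,0}^\varepsilon$ turns the first term into $F(e^\varepsilon_H)$; splitting the second term and testing the Petrov-Galerkin equation against $e^\Pone_H \in V_{H,0}$ gives $a^{\varepsilon,\dif}(u^{\varepsilon,\mathsf{PG}}_H, e^\Pone_H) = F(e^\Pone_H)$. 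By linearity of $F$ the right-hand side collapses to $F(e^\mathsf{osc}_H) - a^{\varepsilon,\dif}(u^{\varepsilon,\mathsf{PG}}_H, e^\mathsf{osc}_H)$, so the whole matter reduces to showing that the last term vanishes. This orthogonality is the main obstacle and is the analogue of identity \eqref{eq:IBP-bubbles} used in Lemma~\ref{lem:IBP-bubbles-gen}: I would argue element by element, using that $e^\mathsf{osc}_H\vert_K \in V_{K,0}$ (a combination of correctors) and that each multiscale basis function composing $u^{\varepsilon,\mathsf{PG}}_H$ satisfies the local variational problem \eqref{eq:msbasis-OS-vf-dofe} with $S_K=K$, namely $a_K^{\varepsilon,\dif}(\phiEps{i}, w) = 0$ for all $w \in V_{K,0}$; summing over $K$ gives $a^{\varepsilon,\dif}(u^{\varepsilon,\mathsf{PG}}_H, e^\mathsf{osc}_H) = 0$. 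It is crucial that the multiscale function occupies the trial slot and the corrector the test slot, so that no symmetry of $A^\varepsilon$ is needed.

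The pointwise bound \eqref{eq:estim-gradient-error3} follows directly from the corrector equation \eqref{eq:diffusion-gen-correctors}. On a fixed $K$, write $\chi = e^\mathsf{osc}_H\vert_K = \sum_\alpha \xi_\alpha \VK{\alpha}$ with $\xi = \nabla e^\Pone_H\vert_K$ constant. Testing \eqref{eq:diffusion-gen-correctors} against $w = \chi$ gives $a_K^{\varepsilon,\dif}(\chi,\chi) = -\int_K \nabla\chi \cdot A^\varepsilon \xi$; the coercivity lower bound on the left and the continuity bound in \eqref{ass:bounds} with Cauchy-Schwarz on the right yield $m\|\nabla\chi\|_{L^2(K)}^2 \leq M\|\nabla\chi\|_{L^2(K)}\|\nabla e^\Pone_H\|_{L^2(K)}$, which is \eqref{eq:estim-gradient-error3} after dividing. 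Estimate \eqref{eq:estim-gradient-error1} is exactly the second assertion of Lemma~\ref{lem:MsFEM-Vxy-no-zero-char} applied with $v_H^\varepsilon = e^\varepsilon_H$ and $v_H = e^\Pone_H$.

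Finally, for \eqref{eq:estim-gradient-error2} I would chain everything together. Summing \eqref{eq:estim-gradient-error3} over $K$ and combining with \eqref{eq:estim-gradient-error1} gives $\|\nabla e^\mathsf{osc}_H\|_{L^2(\mathcal{T}_H)} \leq (M^2/m^2)\|\nabla e^\varepsilon_H\|_{L^2(\mathcal{T}_H)}$. Since $e^\mathsf{osc}_H = e^\varepsilon_H - e^\Pone_H$ lies in the space $W$ of Lemma~\ref{lem:poincare-extended} (both $V_{H,0}^\varepsilon$ and $V_{H,0}$ are contained in $W$), the extended Poincar\'e inequality gives $\|e^\mathsf{osc}_H\|_{H^1(\mathcal{T}_H)} \leq \sqrt{1+C^2}\,\|\nabla e^\mathsf{osc}_H\|_{L^2(\mathcal{T}_H)}$. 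Bounding the left side of \eqref{eq:estim-Gal-PG-Cea} below by $m\|\nabla e^\varepsilon_H\|_{L^2(\mathcal{T}_H)}^2$ through coercivity and the right side above by $\|F\|_{\mathcal{L}(H^1(\mathcal{T}_H))}\|e^\mathsf{osc}_H\|_{H^1(\mathcal{T}_H)}$, then inserting the two preceding bounds and dividing through by $\|\nabla e^\varepsilon_H\|_{L^2(\mathcal{T}_H)}$, produces \eqref{eq:estim-gradient-error2} with the stated constant. The only genuinely delicate step is the orthogonality relation underlying \eqref{eq:estim-Gal-PG-Cea}; the rest is coercivity, continuity and the quoted lemmas.
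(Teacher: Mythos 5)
Your proof is correct and follows essentially the same route as the paper's: the same decomposition via~\eqref{eq:diffusion-MsFEM-Vxy}, the same testing of the Galerkin equation against $e^\varepsilon_H$ and the Petrov--Galerkin equation against $e^\Pone_H$, the corrector equation~\eqref{eq:diffusion-gen-correctors} tested with $e^\mathsf{osc}_H$ for~\eqref{eq:estim-gradient-error3}, Lemma~\ref{lem:MsFEM-Vxy-no-zero-char} for~\eqref{eq:estim-gradient-error1}, and the chaining through Lemma~\ref{lem:poincare-extended} for~\eqref{eq:estim-gradient-error2}. The only cosmetic difference is that you re-derive the orthogonality $a^{\varepsilon,\dif}\left(u^{\varepsilon,\mathsf{PG}}_H, e^\mathsf{osc}_H\right)=0$ element by element from~\eqref{eq:msbasis-OS-vf-dofe} with $S_K=K$ (correctly keeping the multiscale function in the trial slot), whereas the paper simply cites the identity~\eqref{eq:IBP-bubbles}, which is proved by exactly that argument.
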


\begin{proof}
	Since the numerical approximations~$u^{\varepsilon,\mathsf{G}}_H$ and~$u^{\varepsilon,\mathsf{PG}}_H$ both belong to the multiscale approximation space $V_H^\varepsilon$, it follows that $e^\varepsilon_H \in V_H^\varepsilon$, and we are in a position to use~\eqref{eq:diffusion-MsFEM-Vxy}: there exists a unique $e^\Pone_H \in V_H$ such that
	\begin{equation}
		e^\varepsilon_H = e^\Pone_H + e^\mathsf{osc}_H, \quad 
		e^\mathsf{osc}_H = \sum_{K\in\mesh} \sum_{\alpha=1}^d \left.\left(\partial_\alpha e^\Pone_H\right)\right\vert_K \, \VK{\alpha}.
	\label{eq:estim-Gal-PG-error}
	\end{equation}
	Applying Lemma~\ref{lem:MsFEM-Vxy-no-zero-char} to~$v_H^\varepsilon = e^\varepsilon_H$,we immediately obtain~\eqref{eq:estim-gradient-error1}.

	Now recall that the numerical correctors are defined by~\eqref{eq:diffusion-gen-correctors}. Using the fact that~$\nabla e_H^\Pone$ is piecewise constant, this implies that~$e^{\mathsf{osc}}_H$ satisfies the following variational problem in each~$K\in\mesh$:
	\begin{equation*}
		\forall \, w \in V_{K,0}, \quad
		a^{\varepsilon,\dif}_K \left( e^\mathsf{osc}_H, \, w \right) 
		= 
		-a^{\varepsilon,\dif}_K{ \left(
			e^\Pone_H, \, w
		\right)}.
	\end{equation*}
	Without oversampling, it holds $\VK{\alpha} \in V_{K,0}$ for each $1\leq\alpha\leq d$, so~$e^\mathsf{osc}_H$ can be used as a test function here. With the bounds in~\eqref{ass:bounds}, implying continuity and coercivity of~$a_K^{\varepsilon,\dif}$, we obtain~\eqref{eq:estim-gradient-error3}.

	Next using~\eqref{eq:estim-Gal-PG-error}, we can write
	\begin{equation*}
		a^{\varepsilon,\dif}{\left( e^\varepsilon_H, \, e^\varepsilon_H \right)}
		=
		a^{\varepsilon,\dif}{\left( u^{\varepsilon,\mathsf{G}}_H, \, e^\varepsilon_H \right)} -
		a^{\varepsilon,\dif}\left( u^{\varepsilon,\mathsf{PG}}_H, \, e^\Pone_H \right) -
		a^{\varepsilon,\dif}{\left( u^{\varepsilon,\mathsf{PG}}_H, \, e^\mathsf{osc}_H \right)}.
	\end{equation*}
	We deduce from~\eqref{eq:IBP-bubbles} that
	$
		a^{\varepsilon,\dif}{ \left(u^{\varepsilon,\mathsf{PG}}_H, \, e^\mathsf{osc}_H \right)} = 0
	$.
	Since~$e^\varepsilon_H$ can be used as a test function in the discrete problem~\eqref{eq:gen-MsFEM-OS} and~$e^\Pone_H$ in~\eqref{eq:diffusion-gen-MsFEM-testP1}, we have
	$
	a^{\varepsilon,\dif}{\left( u^{\varepsilon,\mathsf{G}}_H, \, e^\varepsilon_H \right)} -
	a^{\varepsilon,\dif}{\left( u^{\varepsilon,\mathsf{PG}}_H, \, e^\Pone_H \right)}
	=
	F\left( e_H^\mathsf{osc} \right)
	$,
	which shows~\eqref{eq:estim-Gal-PG-Cea}. It follows that
	\begin{equation*}
		a^{\varepsilon,\dif}{ \left( e^\varepsilon_H, \, e^\varepsilon_H \right)}
		\leq 
		\lVert F \rVert_{\mathcal{L}(H^1(\mesh))} \, \left\lVert e^\mathsf{osc}_H \right\rVert_{H^1(\mesh)}
		\leq 
		\lVert F \rVert_{\mathcal{L}(H^1(\mesh))} \, \sqrt{1+C^2} \left\lVert \nabla e^\mathsf{osc}_H \right\rVert_{L^2(\mesh)},
	\end{equation*}
	where $C$ is the Poincaré constant from Lemma~\ref{lem:poincare-extended}.
	Now applying~\eqref{eq:estim-gradient-error3} and~\eqref{eq:estim-gradient-error1} on the right, and using coercivity of~$a^{\varepsilon,\dif}$ on the left, we find
	\begin{equation*}
		m \left\lVert \nabla e^\varepsilon_H \right\rVert_{L^2(\mesh)}^2
		\leq 
		\sqrt{1+C^2} \left( \frac{M}{m} \right)^2 \,
		\lVert F \rVert_{\mathcal{L}(H^1(\mesh))} \, \left\lVert \nabla e^\varepsilon_H \right\rVert_{L^2(\mesh)},
	\end{equation*}
	from which we deduce~\eqref{eq:estim-gradient-error2}.
\end{proof}

\begin{proof}[Proof of Lemma~\ref{lem:estim-Gal-PG-eps}]
Let $e^\varepsilon_H = u^{\varepsilon,\mathsf{G}}_H - u^{\varepsilon,\mathsf{PG}}_H$. We will use~\eqref{eq:estim-Gal-PG-error}. By Lemma~\ref{lem:estim-gradient-error}, we have~\eqref{eq:estim-Gal-PG-Cea}. Combined with~\eqref{ass:bounds} and Lemma~\ref{lem:poincare-extended}, this implies
\begin{equation*}
    C \left\lVert u^{\varepsilon,\mathsf{G}}_H - u^{\varepsilon,\mathsf{PG}}_H \right\rVert_{H^1(\mesh)}^2
    \leq 
    a^{\varepsilon,\dif}{\left(e^\varepsilon_H, \, e^\varepsilon_H\right)}
	=
	F\left(e^\mathsf{osc}_H\right)
    =
	\sum_{K\in\mesh} \sum_{\alpha=1}^d \left.\left(\partial_\alpha e^\Pone_H\right)\right\vert_K \, F\left(\VK{\alpha}\right).
\end{equation*}
By Lemma~\ref{lem:corr-to-0}, we know that $\VK{\alpha}\rightharpoonup0$ as $\varepsilon\to0$ weakly in~$H^1(K)$ for each~$K$ and for each~$\alpha$. Therefore, $F\left(\VK{\alpha}\right) \to0$ as $\varepsilon\to0$. In view of~\eqref{eq:estim-gradient-error1} and~\eqref{eq:estim-gradient-error2}, every derivative~$\left.\left(\partial_\alpha e^\Pone_H\right)\right\vert_K$ is bounded independently of~$\varepsilon$. It follows that $F\left(e^\mathsf{osc}_H\right)\to0$ as $\varepsilon\to0$. The conclusion now follows from the above inequality.
\end{proof}

We next study the convergence of~$u^{\varepsilon,\mathsf{G}}_H - u^{\varepsilon,\mathsf{PG}}_H$ as~$H\to0$. To this end, we return to the original hypotheses of Sec.~\ref{sec:msfem-diffusion}, i.e., $f\in L^2(\Omega)$. Note that for the next result, the additional convergence hypothesis of Lemma~\ref{lem:estim-Gal-PG-eps} for~$A^\varepsilon$ is not needed.

\begin{lemma}
	\label{lem:estim-Gal-PG-H}
	Consider the MsFEM-lin or the MsFEM-CR, both without oversampling.
	Assume that~$f \in L^2(\Omega)$. Then there exists a constant~$C$ independent of~$\varepsilon$, $H$ and~$f$ such that
	\begin{equation*}
		\left\lVert u^{\varepsilon,\mathsf{G}}_H - u^{\varepsilon,\mathsf{PG}}_H \right\rVert_{H^1(\mesh)}
		\leq
		C H \lVert f \rVert_{L^2(\Omega)}.
	\end{equation*}
\end{lemma}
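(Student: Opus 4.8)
The plan is to start from the energy identity already secured in Lemma~\ref{lem:estim-gradient-error}. Writing $e_H^\varepsilon = u_H^{\varepsilon,\mathsf{G}} - u_H^{\varepsilon,\mathsf{PG}}$ and combining coercivity of $a^{\varepsilon,\dif}$ with~\eqref{eq:estim-Gal-PG-Cea}, one gets $m\,\lVert e_H^\varepsilon\rVert_{H^1(\mathcal{T}_H)}^2 \le a^{\varepsilon,\dif}(e_H^\varepsilon,e_H^\varepsilon) = F(e_H^{\mathsf{osc}})$, so the whole difficulty is concentrated in extracting one extra power of $H$ out of $F(e_H^{\mathsf{osc}})$. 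Using the decomposition~\eqref{eq:estim-Gal-PG-error}, namely $e_H^{\mathsf{osc}} = \sum_{K\in\mathcal{T}_H}\sum_{\alpha=1}^d (\partial_\alpha e_H^{\Pone})\vert_K\,\VK{\alpha}$, together with the hypothesis $f\in L^2(\Omega)$ so that $F(\VK{\alpha}) = \int_K f\,\VK{\alpha}$ (each corrector being supported in the single element $K$), I would reduce the estimate to an element-wise bound on $\int_K f\,\VK{\alpha}$.

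The key local estimate combines two ingredients. First, since each corrector $\VK{\alpha}$ is supported in $K$ and, for the MsFEM-lin, vanishes on $\partial K$ while, for the MsFEM-CR, has vanishing average on (at least one, in fact every) face of $K$, a local Poincaré/Friedrichs inequality yields $\lVert \VK{\alpha}\rVert_{L^2(K)} \le C_P\,H\,\lVert \nabla \VK{\alpha}\rVert_{L^2(K)}$, with $C_P$ depending only on the regularity of the mesh. This is exactly where the factor $H$ is born, the element diameter being comparable to $H$. Second, testing the corrector equation~\eqref{eq:diffusion-gen-correctors} against $w=\VK{\alpha}\in V_{K,0}$ (admissible in the absence of oversampling) and invoking~\eqref{ass:bounds} gives $m\,\lVert\nabla\VK{\alpha}\rVert_{L^2(K)}^2 \le -a_K^{\varepsilon,\dif}(x^\alpha,\VK{\alpha}) \le M\,|K|^{1/2}\lVert\nabla\VK{\alpha}\rVert_{L^2(K)}$, hence $\lVert\nabla\VK{\alpha}\rVert_{L^2(K)} \le (M/m)\,|K|^{1/2}$. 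Together with the Cauchy-Schwarz inequality these yield $\lvert F(\VK{\alpha})\rvert \le C_P\,(M/m)\,H\,\lVert f\rVert_{L^2(K)}\,|K|^{1/2}$.

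It then remains to assemble the sum. As $\nabla e_H^{\Pone}$ is piecewise constant, $\lvert(\partial_\alpha e_H^{\Pone})\vert_K\rvert\,|K|^{1/2} = \lVert \partial_\alpha e_H^{\Pone}\rVert_{L^2(K)}$, and a Cauchy-Schwarz inequality over the pairs $(K,\alpha)$ produces $\lvert F(e_H^{\mathsf{osc}})\rvert \le C_P\,(M/m)\,\sqrt d\,H\,\lVert f\rVert_{L^2(\Omega)}\,\lVert \nabla e_H^{\Pone}\rVert_{L^2(\mathcal{T}_H)}$. Bounding $\lVert\nabla e_H^{\Pone}\rVert_{L^2(\mathcal{T}_H)} \le (M/m)\lVert\nabla e_H^\varepsilon\rVert_{L^2(\mathcal{T}_H)} \le (M/m)\lVert e_H^\varepsilon\rVert_{H^1(\mathcal{T}_H)}$ via~\eqref{eq:estim-gradient-error1}, inserting into the energy inequality and cancelling one power of $\lVert e_H^\varepsilon\rVert_{H^1(\mathcal{T}_H)}$ gives the claim with $C = C_P\sqrt d\,M^2/m^3$, independent of $\varepsilon$, $H$ and $f$.

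I expect the main obstacle to be the uniform local Poincaré inequality, especially for the MsFEM-CR, where the corrector only has zero mean on faces rather than zero trace. There I would argue by scaling to a reference simplex, on which a Deny-Lions/Peetre-Tartar argument provides the inequality (the functional $v\mapsto\int_e v$ being nonzero on constants), and then transfer the constant back using quasi-uniformity of the mesh. I would also emphasize that the global extended Poincaré inequality of Lemma~\ref{lem:poincare-extended} is of no use here, since its constant is of order one and does not scale with $H$; the gain of the factor $H$ hinges crucially on the locality of the numerical correctors.
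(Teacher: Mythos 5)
Your argument is correct in substance and follows essentially the same route as the paper's proof: the energy identity~\eqref{eq:estim-Gal-PG-Cea}, the decomposition~\eqref{eq:estim-Gal-PG-error}, a local Poincar\'e--Friedrichs inequality as the source of the factor~$H$, then~\eqref{eq:estim-gradient-error1} and coercivity, with cancellation of one power of the error norm. The one genuine variation is where the local inequality is applied: the paper applies it directly to $e_H^{\mathsf{osc}}$ on each element (noting that $e_H^{\mathsf{osc}}$ vanishes on $\partial K$ for the MsFEM-lin, resp.\ has zero face averages for the MsFEM-CR, since $\nabla e_H^{\Pone}$ is constant per element) and then invokes the bound~\eqref{eq:estim-gradient-error3}, already established in Lemma~\ref{lem:estim-gradient-error}, whereas you work corrector by corrector, re-deriving the energy bound $\lVert \nabla \VK{\alpha}\rVert_{L^2(K)} \le (M/m)\,|K|^{1/2}$ by testing~\eqref{eq:diffusion-gen-correctors} with $w=\VK{\alpha}$ (admissible precisely because there is no oversampling) and assembling via Cauchy--Schwarz over the pairs $(K,\alpha)$, using $\lvert(\partial_\alpha e_H^{\Pone})\vert_K\rvert\,|K|^{1/2}=\lVert\partial_\alpha e_H^{\Pone}\rVert_{L^2(K)}$. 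Both routes are valid; yours produces an explicit constant (with an extra $\sqrt d$) at the price of re-proving, corrector by corrector, what~\eqref{eq:estim-gradient-error3} already packages. A minor remark on your scaling argument for the CR case: shape regularity of the mesh family suffices for the elementwise Poincar\'e constant, since the inequality is applied on each $K$ separately with diameter at most of order $H$; quasi-uniformity is not needed for this lemma (the paper invokes it only for Lemma~\ref{lem:estim-Gal-PG-per}).

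One slip should be fixed. Your opening inequality $m\,\lVert e_H^\varepsilon\rVert_{H^1(\mathcal{T}_H)}^2 \le a^{\varepsilon,\dif}(e_H^\varepsilon,e_H^\varepsilon)$ is false as written: the coercivity in~\eqref{ass:bounds} (equivalently~\eqref{eq:gen-pb-coer}) controls only the broken gradient seminorm $\lVert\nabla e_H^\varepsilon\rVert_{L^2(\mathcal{T}_H)}^2$. Consequently your closing claim that Lemma~\ref{lem:poincare-extended} is ``of no use here'' is mistaken: after cancelling one power of the norm you obtain only $\lVert\nabla e_H^\varepsilon\rVert_{L^2(\mathcal{T}_H)}\le C H \lVert f\rVert_{L^2(\Omega)}$, and it is exactly Lemma~\ref{lem:poincare-extended} --- applicable since $e_H^\varepsilon\in V_{H,0}^\varepsilon\subset W$ for both the MsFEM-lin and the MsFEM-CR without oversampling --- that upgrades this to the full $H^1(\mathcal{T}_H)$ norm claimed in the statement. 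Its $O(1)$ constant is harmless because the factor $H$ is already secured; this is precisely the final step of the paper's proof. With that correction your constant picks up a factor $\sqrt{1+C^2}$ and the argument closes.
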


To prove this lemma, we will use some Poincaré-Friedrichs inequalities, for which we refer e.g.~to~\cite[Lemma 4.3]{le_bris_msfem_2013},~\cite[Lemma B.66]{ern_theory_2004}. 

\begin{proof}
Let $e^\varepsilon_H = u^{\varepsilon,\mathsf{G}}_H - u^{\varepsilon,\mathsf{PG}}_H$ and recall the results of Lemma~\ref{lem:estim-gradient-error}. We have $e^\varepsilon_H = e^\Pone_H + e^\mathsf{osc}_H$ (see~\eqref{eq:estim-Gal-PG-error}), and~\eqref{eq:estim-Gal-PG-Cea} provides, for $f\in L^2(\Omega)$, the equality
$
    a^{\varepsilon,\dif}{ \left(e^\varepsilon_H, \, e^\varepsilon_H \right)}
    =
    \left(f,e^\mathsf{osc}_H \right)_{L^2(\Omega)}.
$
Hence, by the Cauchy-Schwarz inequality,
\begin{equation}
    a^{\varepsilon,\dif}{\left( e^\varepsilon_H, \, e^\varepsilon_H \right)}
    \leq
    \lVert f \rVert_{L^2(\Omega)} \left\lVert e^\mathsf{osc}_H \right\rVert_{L^2(\Omega)}.
	\label{eq:estim-Gal-PG-estimate0}
\end{equation}

For the MsFEM-lin (without oversampling), it holds that $\VK{\alpha}=0$ on~$\partial K$ for all mesh elements~$K$ and all $1 \leq \alpha \leq d$, and it follows that $e^{\mathsf{osc}}_H=0$ on the boundaries of all mesh elements. In the case of the MsFEM-CR (without oversampling), it holds that $\displaystyle \int_h \VK{\alpha} =0$ for all faces~$h$ of the mesh and all~$1 \leq \alpha \leq d$. (Note that the average of~$\VK{\alpha}$ over any face~$h$ is well-defined even if~$\VK{\alpha}$ is in general discontinuous along faces.) Since $\partial_\alpha e_H^\Pone$ is constant on each mesh element~$K$, we also have $\displaystyle \int_h e_H^{\mathsf{osc}} =0$. Hence, both for the MsFEM-lin and for the MsFEM-CR, an appropriate variant of the Poincaré-Friedrichs inequality yields a constant~$C$ independent of~$K$ but dependent on the regularity of the mesh, such that
\begin{equation}
    \left\lVert e^\mathsf{osc}_H \right\rVert_{L^2(K)} \leq 
    CH \left\lVert \nabla e^\mathsf{osc}_H \right\rVert_{L^2(K)}.
    \label{eq:estim-Gal-PG-estimate1}
\end{equation}
Upon inserting the inequalities~\eqref{eq:estim-Gal-PG-estimate1}, \eqref{eq:estim-gradient-error3} and~\eqref{eq:estim-gradient-error1} into~\eqref{eq:estim-Gal-PG-estimate0}, it follows that
\begin{equation*}
    a^{\varepsilon,\dif}{ \left( e^\varepsilon_H,e^\varepsilon_H \right)}
    \leq 
    CH \left(\frac{M}{m}\right)^2 \left\lVert \nabla e^\varepsilon_H \right\rVert_{L^2(\mesh)} \,
    \lVert f \rVert_{L^2(\Omega)}.
\end{equation*}
One more time using the lower bound in~\eqref{ass:bounds}, we find
\begin{equation*}
    \left\lVert \nabla e^\varepsilon_H \right\rVert_{L^2(\mesh)}
    \leq 
    CH \frac{M^2}{m^3} \lVert f \rVert_{L^2(\Omega)}.
\end{equation*}
The proof is concluded by application of Lemma~\ref{lem:poincare-extended} to~$e_H^\varepsilon$.
\end{proof}

\subsection{Convergence results in the periodic setting}
\label{sec:homogenization-convergence}

We now study the MsFEM-lin applied to the periodic setting introduced in Sec.~\ref{sec:homogenization} in some more detail. To the best of our knowledge, all convergence results known for the MsFEM are obtained in this periodic setting (see e.g.~\cite{efendiev_convergence_2000, hou_removing_2004, efendiev_multiscale_2009, allaire_multiscale_2005,hesthaven_high-order_2014, le_bris_msfem_2013,le_bris_msfem_2014,le_bris_multiscale_2014,le_bris_multiscale_2019}). The analysis in these works relies on the explicit description of the microstructure that we summarized in Sec.~\ref{sec:homogenization}. In particular, recall the existence of a homogenized diffusion coefficient given by~\eqref{eq:diffusion-hom-coef} and the first-order two-scale expansion~\eqref{eq:expansion-1-hom}. We emphasize, however, that the \emph{application} of the MsFEM does not require the periodic setting, nor does it even suppose the PDE under consideration to be embedded in a sequence of PDEs for a family of parameters $\varepsilon$ that tend~0. We refer to Sec.~\ref{sec:gen-num} for examples of such numerical experiments.

Applying the MsFEM to a sequence of matrices~$A^\varepsilon = A^\mathsf{per}(\cdot/\varepsilon)$, we obtain a sequence of effective tensors~$\overline{A}(\varepsilon)$. Each~$\overline{A}(\varepsilon)$ is defined by~\eqref{eq:diffusion-eff-msfem-gal} for a fixed value of~$\varepsilon$. We have the following convergence result.

\begin{lemma}
\label{lem:eff-coef-conv}
Let~$\overline{A}(\varepsilon)$ be the sequence of effective tensors obtained in~\eqref{eq:diffusion-eff-msfem-gal} by applying the MsFEM-lin without oversampling to $A^\varepsilon = A^{\mathsf{per}}(\cdot/\varepsilon)$. We have~$\overline{A}(\varepsilon) \to A^\star$ as~$\varepsilon\to0$.
\end{lemma}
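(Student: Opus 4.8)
The plan is to pass to the limit entry by entry and element by element, after reducing the quadratic quantity that defines $\overline{A}(\varepsilon)$ to a linear one, to which the convergence of $H$-convergent fluxes applies. Fix $K \in \mathcal{T}_H$ and indices $1 \le \alpha,\beta \le d$, and write $\tau^{\varepsilon,\alpha} = x^\alpha + \VK{\alpha}$ as in the proof of Lemma~\ref{lem:corr-to-0}. By~\eqref{eq:diffusion-eff-msfem-gal} and the definition of $a_K^{\varepsilon,\dif}$,
\[
    |K| \, \overline{A}_{\beta,\alpha}(\varepsilon)\vert_K
    = a_K^{\varepsilon,\dif}\!\left(\tau^{\varepsilon,\alpha}, \tau^{\varepsilon,\beta}\right)
    = \int_K \nabla \tau^{\varepsilon,\beta} \cdot A^\varepsilon \nabla \tau^{\varepsilon,\alpha},
\]
where, for the MsFEM-lin, $\tau^{\varepsilon,\alpha}$ solves $-\operatorname{div}(A^\varepsilon \nabla \tau^{\varepsilon,\alpha}) = 0$ in $K$ with $\tau^{\varepsilon,\alpha} = x^\alpha$ on $\partial K$.

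The first genuine step is to eliminate the corrector from the second argument. Since $\nabla \tau^{\varepsilon,\beta} = e_\beta + \nabla \VK{\beta}$ and $\VK{\beta} \in H^1_0(K)$ (there is no oversampling), the weak formulation of the local problem tested against $\phi = \VK{\beta}$ gives $\int_K \nabla \VK{\beta} \cdot A^\varepsilon \nabla \tau^{\varepsilon,\alpha} = 0$. Hence the quadratic expression collapses to the linear one
\[
    |K| \, \overline{A}_{\beta,\alpha}(\varepsilon)\vert_K
    = \int_K \left(A^\varepsilon \nabla \tau^{\varepsilon,\alpha}\right) \cdot e_\beta,
\]
and it remains only to identify the weak $L^2(K)$ limit of the flux $A^\varepsilon \nabla \tau^{\varepsilon,\alpha}$ and to test it against the constant vector $e_\beta$.

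To obtain this limit I would use that $A^\varepsilon$ $H$-converges to $A^\star$ on every open subset of $\Omega$, in particular on $K$ (see the references collected in Sec.~\ref{sec:homogenization}, e.g.~\cite{murat_h-convergence_1997, allaire_shape_2002}). By Lemma~\ref{lem:corr-to-0}, $\tau^{\varepsilon,\alpha} \rightharpoonup x^\alpha$ weakly in $H^1(K)$, while $-\operatorname{div}(A^\varepsilon \nabla \tau^{\varepsilon,\alpha}) = 0$ is trivially precompact in $H^{-1}(K)$; the Murat--Tartar characterization of $H$-convergence then yields $A^\varepsilon \nabla \tau^{\varepsilon,\alpha} \rightharpoonup A^\star e_\alpha$ weakly in $L^2(K)$. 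Testing this convergence against $e_\beta$ gives $\int_K (A^\varepsilon \nabla \tau^{\varepsilon,\alpha}) \cdot e_\beta \to |K|\,(A^\star e_\alpha)\cdot e_\beta = |K|\, A^\star_{\beta,\alpha}$, so that $\overline{A}_{\beta,\alpha}(\varepsilon)\vert_K \to A^\star_{\beta,\alpha}$. As the mesh has finitely many elements and there are finitely many index pairs $(\alpha,\beta)$, this establishes $\overline{A}(\varepsilon) \to A^\star$ as $\varepsilon\to0$.

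The main obstacle is precisely this flux-convergence step: the algebraic reduction to a linear integrand is elementary, but passing to the limit genuinely relies on the compensated-compactness content of $H$-convergence, namely that a divergence-free flux can be paired with a curl-free gradient in the limit. I would be careful to record that the two facts being invoked — the locality of $H$-convergence on $K$ and the weak $L^2$ convergence of the associated fluxes — are standard, and that the constant vector $e_\beta$ is an admissible test function only because the reduction has removed all $\varepsilon$-dependence from the second slot.
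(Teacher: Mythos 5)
Your proof is correct, and it reaches the limit by a genuinely different mechanism than the paper. The two arguments share the opening move: the paper also linearizes the quadratic expression first, using the corrector equation to rewrite $\overline{A}_{\beta,\alpha}(\varepsilon)\vert_K$ as $\frac{1}{|K|}\,a_K^{\varepsilon,\dif}\left(x^\alpha+\VK{\alpha},\,x^\beta\right)$ (see~\eqref{eq:diffusion-eff-msfem-pg}), and both proofs rest on Lemma~\ref{lem:corr-to-0} for the weak convergence $\tau^{\varepsilon,\alpha}\rightharpoonup x^\alpha$ in $H^1(K)$. The divergence is in the limit passage. The paper substitutes the first-order two-scale expansion $\tau^{\varepsilon,\alpha}\approx x^\alpha+\varepsilon\, w_\alpha(\cdot/\varepsilon)$, invoking \emph{strong} $H^1(K)$ convergence of the remainder --- a step that implicitly needs the regularity caveat stated after~\eqref{eq:expansion-1-hom} and is not free in general, owing to boundary layers --- and then concludes by weak convergence of the resulting $\varepsilon$-periodic integrand to its cell mean. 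You instead invoke the Murat--Tartar flux-convergence property of $H$-convergence (locality plus the div--curl mechanism): since $-\operatorname{div}\left(A^\varepsilon\nabla\tau^{\varepsilon,\alpha}\right)=0$ is trivially compact in $H^{-1}(K)$, you get $A^\varepsilon\nabla\tau^{\varepsilon,\alpha}\rightharpoonup A^\star e_\alpha$ weakly in $L^2(K)$, and pairing with the constant vector $e_\beta$ finishes. Your route buys two things: it is regularity-free (nothing beyond~\eqref{ass:bounds} is required of $A^{\mathsf{per}}$, so it actually closes the small informality in the paper's "$\approx$" step), and it extends verbatim to any sequence $A^\varepsilon$ that $H$-converges to a constant matrix, matching the generality of Lemma~\ref{lem:estim-Gal-PG-eps}. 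What the paper's computation buys is explicitness within the periodic framework already set up in Sec.~\ref{sec:homogenization}, and it is the form of the argument that later sharpens to a quantitative rate, where the explicit profile $\varepsilon\, w_\alpha(\cdot/\varepsilon)$ is combined with the maximum principle in the proof of Lemma~\ref{lem:estim-Gal-PG-per}. Your closing remark --- that $e_\beta$ is an admissible test vector precisely because the linearization removed all $\varepsilon$-dependence from the second slot --- identifies the right pivot of the argument.
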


\begin{proof}
We fix a mesh element~$K \in \mesh$.
First observe that~$\overline{A}(\varepsilon)$ and~$A^\star$ satisfy
\begin{equation}
	\left. \overline{A}_{\beta,\alpha}(\varepsilon) \right\vert_K 
    =
    \frac{1}{|K|} a^{\varepsilon,\dif}_K{ \left(x^\alpha + \VK{\alpha},\, x^\beta \right)},
    \qquad 
	A^\star_{\beta,\alpha} 
    =
    \int_Q e_\beta \cdot A^\mathsf{per}(e_\alpha + \nabla w_\alpha),
\label{eq:diffusion-eff-msfem-pg}
\end{equation}
for each $1 \leq \alpha,\beta \leq d$, in view of the variational formulations satisfied by~$\VK{\alpha}$ (solution to the PDE~\eqref{eq:diffusion-MsFEM-correctors}) and~$w_\alpha$ (solution to the PDE~\eqref{eq:diffusion-correctors}).
We recall that~$Q$ is the unit cube of~$\bbR^d$.

Now let~$\tau^{\varepsilon,\alpha} = x^\alpha + \VK{\alpha}$. In view of Lemma~\ref{lem:corr-to-0}, $\tau^{\varepsilon,\alpha} \rightharpoonup \tau^{\star,\alpha}$ as~$\varepsilon\to0$ weakly in~$H^1(K)$, with $\tau^{\star,\alpha}(x)=x^\alpha$. Writing the two-scale expansion~\eqref{eq:expansion-1-hom} of~$\tau^{\varepsilon,\alpha}$, we thus have, when~$\varepsilon$ is small,
\begin{equation*}
	\tau^{\varepsilon,\alpha}(x)
	\approx
	\tau^{\star,\alpha}(x) + \varepsilon \sum_{\gamma=1}^d w_\gamma \left( \frac{x}{\varepsilon} \right) \partial_\gamma \tau^{\star,\alpha}(x)
	=
	x^\alpha + \varepsilon \, w_\alpha \left( \frac{x}{\varepsilon} \right),
\end{equation*}
and the difference tends to zero in~$H^1(K)$ as $\varepsilon\to0$. Inserting this convergence in~\eqref{eq:diffusion-eff-msfem-pg}, we deduce that
\begin{equation*}
	\lim_{\varepsilon\to0} \left. \overline{A}_{\beta,\alpha}(\varepsilon) \right\vert_K
	=
	\lim_{\varepsilon\to0} \frac{1}{|K|} \int_K
		e_\beta \cdot A^\mathsf{per} \left(\frac{x}{\varepsilon}\right) \left(
			e_ \alpha + \nabla w_\alpha \left(\frac{x}{\varepsilon}\right)
		\right) \dd x
	=
	A^\star_{\beta,\alpha}.
\end{equation*}
The convergence to the mean on the unit cube in the last equality follows from the $Q$-periodicity of the function \mbox{
$
e_\beta \cdot A^\mathsf{per} \, (e_ \alpha + \nabla w_\alpha).
$
}
\end{proof}

The following lemma studies the convergence of~$u^{\varepsilon,\mathsf{G}}_H - u^{\varepsilon,\mathsf{PG}}_H$ towards $0$ as $\varepsilon\to0$ for the MsFEM-lin without oversampling. As was stated in Rem.~\ref{rem:estim-eps-rate}, thanks to the periodic setting, we now obtain a rate for the convergence stated in Lemma~\ref{lem:estim-Gal-PG-eps}.

\begin{lemma}
\label{lem:estim-Gal-PG-per}
Let~$f \in L^2(\Omega)$. Suppose that the family of meshes~$(\mesh)_{H>0}$ is quasi-uniform. Consider the MsFEM-lin without oversampling. For~$A^\varepsilon = A^\mathsf{per}(\cdot/\varepsilon)$ sufficiently regular, we have 
\begin{equation*}
	\left\lVert u^{\varepsilon,\mathsf{G}}_H - u^{\varepsilon,\mathsf{PG}}_H \right\rVert_{H^1(\Omega)} 
	\leq
	C\varepsilon \, \lVert f \rVert_{L^2(\Omega)},
\end{equation*}
where the constant~$C$ depends on the dimension~$d$ and the constants~$m, M$ in~\eqref{ass:bounds}, but not on $\varepsilon$, $H$ or~$f$.
\end{lemma}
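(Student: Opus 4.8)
The plan is to upgrade the generic order-$H$ estimate of Lemma~\ref{lem:estim-Gal-PG-H} to an order-$\varepsilon$ estimate by exploiting the explicit periodic structure of the correctors. I would start exactly as in the proof of Lemma~\ref{lem:estim-Gal-PG-H}: writing $e^\varepsilon_H = u^{\varepsilon,\mathsf{G}}_H - u^{\varepsilon,\mathsf{PG}}_H = e^\Pone_H + e^\mathsf{osc}_H$ as in~\eqref{eq:estim-Gal-PG-error}, the identity~\eqref{eq:estim-Gal-PG-Cea} combined with the coercivity in~\eqref{ass:bounds} gives
\[
	m \lVert \nabla e^\varepsilon_H \rVert_{L^2(\mathcal{T}_H)}^2
	\leq
	a^{\varepsilon,\dif}(e^\varepsilon_H, e^\varepsilon_H)
	=
	\left( f, e^\mathsf{osc}_H \right)_{L^2(\Omega)}
	\leq
	\lVert f \rVert_{L^2(\Omega)} \, \lVert e^\mathsf{osc}_H \rVert_{L^2(\Omega)} .
\]
Everything then reduces to proving $\lVert e^\mathsf{osc}_H \rVert_{L^2(\Omega)} \leq C \varepsilon \, \lVert \nabla e^\varepsilon_H \rVert_{L^2(\mathcal{T}_H)}$, which in turn follows from the \emph{element-wise} corrector bound $\lVert \VK{\alpha} \rVert_{L^2(K)} \leq C \varepsilon \, |K|^{1/2}$ with $C$ independent of $\varepsilon$, $H$ and $K$.

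The hard part is precisely this sharp $L^2$ bound on $\VK{\alpha}$: the energy estimate alone only yields a bound of order $H\,|K|^{1/2}$ (this is what produces the $O(H)$ rate in Lemma~\ref{lem:estim-Gal-PG-H}), so the periodic structure must genuinely be used. I would set $\tau^{\varepsilon,\alpha} = x^\alpha + \VK{\alpha}$, which by~\eqref{eq:diffusion-gen-correctors} is $A^\varepsilon$-harmonic in $K$ with $\tau^{\varepsilon,\alpha} = x^\alpha$ on $\partial K$. The crucial observation is that the first-order two-scale field $x^\alpha + \varepsilon \, w_\alpha(\cdot/\varepsilon)$ is \emph{exactly} $A^\varepsilon$-harmonic as well: its gradient equals $e_\alpha + (\nabla w_\alpha)(\cdot/\varepsilon)$, so $A^\varepsilon \nabla(x^\alpha + \varepsilon w_\alpha(\cdot/\varepsilon)) = A^\mathsf{per}(\cdot/\varepsilon)\,(e_\alpha + (\nabla w_\alpha)(\cdot/\varepsilon))$, whose divergence vanishes by the corrector equation~\eqref{eq:diffusion-correctors}. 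Hence $\theta^\varepsilon := \VK{\alpha} - \varepsilon \, w_\alpha(\cdot/\varepsilon)$ solves $-\operatorname{div}(A^\varepsilon \nabla \theta^\varepsilon) = 0$ in $K$ with boundary datum $\theta^\varepsilon = -\varepsilon \, w_\alpha(\cdot/\varepsilon)$ on $\partial K$. The weak maximum principle for scalar divergence-form elliptic equations (e.g.~\cite[Theorem 8.1]{gilbarg_elliptic_2001}) then gives $\lVert \theta^\varepsilon \rVert_{L^\infty(K)} \leq \varepsilon \, \lVert w_\alpha \rVert_{L^\infty(Q)}$, whence $\lVert \VK{\alpha} \rVert_{L^\infty(K)} \leq 2 \varepsilon \, \lVert w_\alpha \rVert_{L^\infty(Q)}$ and therefore $\lVert \VK{\alpha} \rVert_{L^2(K)} \leq 2 \varepsilon \, \lVert w_\alpha \rVert_{L^\infty(Q)} \, |K|^{1/2}$. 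The hypothesis that $A^\mathsf{per}$ be sufficiently regular enters here only to guarantee $w_\alpha \in L^\infty(Q)$ (which in fact already follows from the De Giorgi--Nash theorem, since $w_\alpha + x^\alpha$ is locally a solution of a homogeneous divergence-form equation).

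To conclude, I would assemble the pieces. On each $K$ the gradient $\nabla e^\Pone_H$ is constant, so $|K|^{1/2}\,|(\partial_\alpha e^\Pone_H)\vert_K| = \lVert (\partial_\alpha e^\Pone_H)\vert_K \rVert_{L^2(K)}$; combining this with the corrector bound and the definition of $e^\mathsf{osc}_H$ in~\eqref{eq:estim-Gal-PG-error} yields $\lVert e^\mathsf{osc}_H \rVert_{L^2(K)} \leq C \varepsilon \lVert \nabla e^\Pone_H \rVert_{L^2(K)}$; squaring, summing over $K \in \mathcal{T}_H$, and invoking~\eqref{eq:estim-gradient-error1} gives $\lVert e^\mathsf{osc}_H \rVert_{L^2(\Omega)} \leq C \varepsilon \tfrac{M}{m} \lVert \nabla e^\varepsilon_H \rVert_{L^2(\mathcal{T}_H)}$. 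Inserting this into the displayed inequality and cancelling one factor of $\lVert \nabla e^\varepsilon_H \rVert_{L^2(\mathcal{T}_H)}$ produces $\lVert \nabla e^\varepsilon_H \rVert_{L^2(\mathcal{T}_H)} \leq C \varepsilon \lVert f \rVert_{L^2(\Omega)}$. Since the MsFEM-lin space is conforming, $e^\varepsilon_H \in H^1_0(\Omega)$ and $\lVert \cdot \rVert_{H^1(\mathcal{T}_H)}$ agrees with $\lVert \cdot \rVert_{H^1(\Omega)}$ on it; the $H$-independent Poincaré inequality of Lemma~\ref{lem:poincare-extended} then upgrades the gradient bound to the claimed $H^1(\Omega)$ estimate, with a constant depending only on $d$, $m$, $M$ (through $\lVert w_\alpha \rVert_{L^\infty(Q)}$ and the Poincaré constant).

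I expect the sharp $L^2$ corrector bound to be the only real obstacle, and I would flag two points. First, the maximum-principle route above is specific to the \emph{scalar} diffusion setting and does not by itself require quasi-uniformity; quasi-uniformity enters in the more robust (but more technical) alternative, where one rescales each $K$ to a reference simplex on which the effective oscillation parameter becomes $\varepsilon/H$ and applies a standard $L^2$ periodic-homogenization estimate, tracking the $H$-powers uniformly via the shape-regularity of the family. Second, the genuinely delicate feature is obtaining the \emph{full} order $\varepsilon$ in $L^2$ without the $\sqrt{\varepsilon}$ boundary-layer loss familiar from $H^1$ two-scale expansions; this is possible here precisely because only the difference $\tau^{\varepsilon,\alpha} - x^\alpha = \VK{\alpha}$ is estimated in $L^2$ and the homogenized limit $x^\alpha$ is affine, so no corrector needs to be added before measuring the error.
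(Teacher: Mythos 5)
Your proof is correct and follows essentially the same route as the paper's: the starting identity~\eqref{eq:estim-Gal-PG-Cea} with Cauchy-Schwarz, the key observation that $x^\alpha + \varepsilon\, w_\alpha(\cdot/\varepsilon)$ is \emph{exactly} $A^\varepsilon$-harmonic so that $\VK{\alpha} - \varepsilon\, w_\alpha(\cdot/\varepsilon)$ obeys the maximum principle of~\cite[Theorem 8.1]{gilbarg_elliptic_2001} with boundary data of size $\varepsilon \lVert w_\alpha \rVert_{L^\infty(Q)}$ (the regularity of~$A^\mathsf{per}$ entering only through boundedness of~$w_\alpha$), and the reassembly of~$e^\mathsf{osc}_H$ via~\eqref{eq:estim-gradient-error1} followed by coercivity. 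The one genuine refinement is your bookkeeping: by keeping the factor $|K|^{1/2}$ throughout instead of converting it to $H^{d/2}$ as the paper does in~\eqref{eq:estim-Gal-PG-per1}, the exact identity $\lVert \partial_\alpha e^\Pone_H \rVert_{L^2(K)} = \left\lvert \left.\left(\partial_\alpha e^\Pone_H\right)\right\vert_K \right\rvert \, |K|^{1/2}$ replaces the step where the paper invokes quasi-uniformity, so — as you correctly flag — that hypothesis is not actually needed on this maximum-principle route.
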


\begin{proof}
Let~$e^\varepsilon_H  = u^{\varepsilon,\mathsf{G}}_H - u^{\varepsilon,\mathsf{PG}}_H$. Lemma~\ref{lem:estim-gradient-error} applies, so we can use~\eqref{eq:estim-Gal-PG-Cea} and a Cauchy-Schwarz inequality to find
\begin{equation}
	a^{\varepsilon,\dif}{ \left( e^\varepsilon_H, \, e^\varepsilon_H \right)}
	\leq
	\lVert f \rVert_{L^2(\Omega)} \, \left\lVert e^{\mathsf{osc}}_H \right\rVert_{L^2(\Omega)}
	\leq
	\lVert f \rVert_{L^2(\Omega)} \,
	\left\lVert	
		\sum_{K\in\mesh} \sum_{\alpha=1}^d 
		\left. \left(\partial_\alpha e^\Pone_H\right) \right\vert_K \VK{\alpha}
	\right\rVert_{L^2(\Omega)}.
\label{eq:estim-Gal-PG-per0}
\end{equation}
Next we seek a bound on~$\VK{\alpha}$ in~$L^2(K)$. 
Using~\eqref{eq:diffusion-MsFEM-correctors} and~\eqref{eq:diffusion-correctors}, we have
\begin{equation*}
	\operatorname{div} \left( A^\mathsf{per}\left(\frac{\cdot}{\varepsilon}\right) \nabla
		\left[ \VK{\alpha} - \varepsilon \, w_\alpha \left(\frac{\cdot}{\varepsilon}\right) \right]
	\right)
	=0
	\quad 
	\text{in } K.
\end{equation*}
Since~$\VK{\alpha}$ vanishes on~$\partial K$ (recall that we consider the MsFEM-lin without oversampling), the maximum principle~\cite[Theorem 8.1]{gilbarg_elliptic_2001} yields
\begin{equation*}
	\left\lVert 
		\VK{\alpha} - \varepsilon \, w_\alpha \left(\frac{\cdot}{\varepsilon}\right)
	\right\rVert_{L^2(K)}
	\leq
	\sup_{\partial K} \left\lvert \VK{\alpha} - \varepsilon \, w_\alpha \left(\frac{\cdot}{\varepsilon}\right) \right\rvert \sqrt{\int_K 1}
	=
	\varepsilon \, |K|^{1/2} 
	\sup_{\partial K} 
	\left\lvert 
		w_\alpha \left(\frac{\cdot}{\varepsilon}\right)
	\right\rvert.
\end{equation*}
When~$A^\mathsf{per}$ is sufficiently regular, the corrector functions~$w_\alpha$ are uniformly bounded. Then the mesh regularity provides a constant~$C$ such that for each~$K\in\mesh$ and each~$1 \leq \alpha \leq d$, we have 
\begin{equation*}
	\left\lVert 
		\VK{\alpha}
	\right\rVert_{L^2(K)}
	\leq
	\left\lVert 
		\VK{\alpha} - \varepsilon \, w_\alpha \left(\frac{\cdot}{\varepsilon}\right)
	\right\rVert_{L^2(K)}
	+
	\varepsilon \,  \left\lVert 
		w_\alpha \left(\frac{\cdot}{\varepsilon}\right)
	\right\rVert_{L^2(K)}
	\leq
	C \varepsilon H^{d/2}.
\end{equation*}
Since all~$\VK{\alpha}$ have disjoint supports, we can use the latter estimate to bound
\begin{align}
	\left\lVert	
		\sum_{K\in\mesh} \sum_{\alpha=1}^d 
		\left. \left(\partial_\alpha e^\Pone_H\right) \right\vert_K\VK{\alpha}
	\right\rVert_{L^2(\Omega)}^2
	&=
	\sum_{K\in\mesh} \left \lVert
		\sum_{\alpha=1}^d \left. \left( \partial_\alpha e^\Pone_H \right) \right\vert_K\VK{\alpha}
	\right\rVert_{L^2(K)}^2
	\nonumber\\
	&\leq 
	C \varepsilon^2 
	\sum_{K\in\mesh} 
		\sum_{\alpha=1}^d \left( H^{d/2} \left. \left(\partial_\alpha e^\Pone_H\right) \right\vert_K \right)^2
	\nonumber\\
	&\leq
	C \varepsilon^2
	\sum_{K\in\mesh} 
		\sum_{\alpha=1}^d \left\lVert \partial_\alpha e^\Pone_H \right\rVert_{L^2(K)}^2
	\nonumber\\
	&=
	C\varepsilon^2 \, \left\lVert \nabla e^\Pone_H \right\rVert_{L^2(\Omega)}^2.
	\label{eq:estim-Gal-PG-per1}
\end{align}
The last inequality relies on the quasi-uniformity of the mesh.

We insert~\eqref{eq:estim-Gal-PG-per1} combined with~\eqref{eq:estim-gradient-error1} into~\eqref{eq:estim-Gal-PG-per0} to find
\begin{equation*}
	a^{\varepsilon,\dif}{ \left( e^\varepsilon_H, \, e^\varepsilon_H \right)}
	\leq
	C\varepsilon \,
	\lVert f \rVert_{L^2(\Omega)} \,
	\left\lVert \nabla e^\varepsilon_H \right\rVert_{L^2(\Omega)}.
\end{equation*}
Applying the coercivity property in~\eqref{ass:bounds} on the left-hand side and a Poincaré inequality on~$\Omega$, we obtain the desired result.
\end{proof}

The classical error estimate for the Galerkin MsFEM approach~\eqref{eq:diffusion-MsFEM} is obtained in the periodic setting and under some regularity assumption on~$A^\mathsf{per}$ and on the homogenized limit~$u^\star$. The bound obtained in~\cite[Theorem 6.5]{efendiev_multiscale_2009} reads
\begin{equation}
	\left\lVert u^\varepsilon - u^{\varepsilon,\mathsf{G}}_H \right\rVert_{H^1(\Omega)}
	\leq 	
	C\left(H+\varepsilon+\sqrt{\varepsilon/H}\right),
	\label{eq:msfem-error-estimate}
\end{equation}
for some~$C$ independent of~$\varepsilon$ and~$H$.
Lemma~\ref{lem:estim-Gal-PG-per} shows that the same estimate holds true for~$u^{\varepsilon,\mathsf{PG}}_H$, the Petrov-Galerkin MsFEM approximation, under the correct regularity assumptions. We note that the bound for~$u^{\varepsilon,\mathsf{PG}}_H$ can also be inferred from Lemma~\ref{lem:estim-Gal-PG-H}. However, since the MsFEM is applied in the regime where~$\varepsilon < H$, the result of Lemma~\ref{lem:estim-Gal-PG-per} is more precise, thanks to the extra structural assumptions made on the diffusion tensor~$A^\varepsilon$.
\section{Numerical comparison}
  \label{sec:gen-num}


We now compare the Galerkin MsFEM~\eqref{eq:gen-MsFEM-OS}, its non-intrusive approximation~\eqref{eq:gen-MsFEM-noni} and the Petrov-Galerkin MsFEM~\eqref{eq:gen-MsFEM-OS-testP1} on a concrete numerical example in 2D ($d=2$). The numerical approximations obtained for these various MsFEMs shall be denoted $u_H^{\varepsilon,\mathsf{G}}$, $u_H^{\varepsilon,\text{\sf G-ni}}$ and $u_H^{\varepsilon,\mathsf{PG}}$, respectively. 

\subsection{Description of the numerical experiments}
\label{sec:gen-num-setting}
We consider the pure diffusion equation~\eqref{eq:diffusion-pde} on the domain $\Omega = (0,1) \times (0,1)$. Thus, the local bilinear forms are $a_K^\varepsilon = a_K^{\varepsilon,\dif}$ defined in Example~\ref{ex:diffusion-vf-gen}, where we consider the three diffusion tensors
\begin{subequations}
	\label{eq:diffusion-test}
	\begin{align}
		A^{\varepsilon,\mathsf{per}}(x) &= \, \nu^\varepsilon(x) \operatorname{Id}, 
		\quad 
		\nu^\varepsilon(x) = 
		1 + 100 \, \cos^2{(\pi \, x_1 / \varepsilon)} \sin^2{(\pi \, x_2 / \varepsilon)},
		\label{eq:diffusion-test-per} \\
		A^{\varepsilon,\mathsf{lp}}(x) &= \left( 1+\cos^2{(2\pi x_1)} \right) \, A^\varepsilon(x),
		\label{eq:diffusion-test-locper}
		\\
		A^{\varepsilon,\mathsf{np}}(x) &= 1 + 
		\left( 
			1 + 100 \, \cos^2{(\pi \, x_1 / \varepsilon)} \sin^2{(\pi \, x_2 / \varepsilon)}
		\right)
		\cos^2{\left( 
			\frac{x_1^2 + x_2^2}{\varepsilon}
		\right)}.
		\label{eq:diffusion-test-nonper}
	\end{align}
\end{subequations}
We fix $f(x) = \sin{(x_1)}\sin{(x_2)}$. 

The coefficient~$A^{\varepsilon,\mathsf{per}}$ is $\varepsilon$-periodic with period~$\varepsilon=\pi/150 \approx 0.02$. The coefficient~$A^{\varepsilon,\mathsf{lp}}$ is locally periodic and, although a homogenized coefficient exists (see~\cite{bensoussan_asymptotic_1978}), it is not constant. Consequently, a certain number of lemmas established in Sec.~\ref{sec:compare-gal-pg} are not known to hold true. Finally, we include the coefficient~$A^{\varepsilon,\mathsf{np}}$ as an example of a multiscale problem for which we are not aware of any explicit homogenization results.
We will see nevertheless that the non-intrusive MsFEMs that we introduced above provide good approximations compared to their intrusive G-MsFEM counterparts for all test cases.

A reference solution~$u_h^\varepsilon$ is computed on a uniform $1024\times1024$ mesh~$\fmesh$ by means of a standard~$\Pone$ finite element method using \textsc{FreeFEM++}~\cite{hecht_new_2012}. The mesh~$\fmesh$ (as well as the coarse mesh introduced below) consists of squares cut in two along a diagonal that is in the same direction for all squares, i.e., such as the meshes in Fig.~\ref{fig:os}. The \textsc{FreeFEM++} scripts to perform all different MsFEMs can be found at~\cite{rutger_biezemans_2023_7525059}.

We compare the reference solution~$u_h^\varepsilon$ to MsFEM solutions obtained on a coarse mesh~$\mesh$ for varying~$H$. The mesh~$\mesh$ is a uniform $1/H \times 1/H$ triangulation of~$\Omega$. We test the MsFEM-lin and the MsFEM-CR using the sampling operator $s_K^\varepsilon = a^{\varepsilon,\dif}_K$.  All oversampling methods in this section use a homothety ratio of 3 for the construction of the oversampling patches in Def.~\ref{def:ospatch}. 
A precise definition of the associated basis functions can be found in Examples~\ref{ex:msfem-lin-basis} and~\ref{ex:msfem-cr-basis}. The mesh~$\fmesh$ is a refinement of~$\mesh$ for all values of~$H$. Therefore, for each $K \in \mesh$, we use the corresponding submesh of~$\fmesh$ (consisting of all triangles included in~$K$) for the numerical approximation of the numerical correctors in~\eqref{eq:MsFEM-gen-correctors-dofe} by $\Pone$ Lagrange finite elements.

\begin{remark}
	\label{rem:error-computation}
	We provide a few remarks on the computation of the error, which takes place in the post-processing step of the MsFEM. Evidently, these computations have to be carried out by integration on the fine scale and one may try to perform these computations on the global mesh~$\fmesh$. However, the legacy code does not, in general, operate on the global fine mesh. 
	Moreover, we stress that the approximation~$u^\varepsilon_H$ is in general discontinuous across element edges (for the MsFEM-CR, and for all MsFEMs with oversampling), and can therefore not be represented globally by e.g.~a piecewise $\Pone$ function on the fine mesh~$\fmesh$ (even if one supposes that~$\fmesh$ is conformal).
	Thus, one has to compute the error element by element, using the code for the microscale, according to the sum 
	\begin{equation*}
		\left\lVert u^\varepsilon - u^\varepsilon_H \right\rVert_{H^1(\mesh)}^2
		=
		\sum_{K\in\mesh} \left\lVert u^\varepsilon - u^\varepsilon_H \right\rVert_{H^1(K)}^2.
	\end{equation*}
	To do so, Equation~\eqref{eq:gen-MsFEM-post} can be used on each element~$K$ to find the correct values of~$u^\varepsilon_H$, and the global fine mesh~$\mathcal{T}_h$ is never used.
\end{remark}

\subsection{Results}
\label{sec:gen-num-results}

We first compare the approximations~$u_H^{\varepsilon,\mathsf{G}}$ and~$u_H^{\varepsilon,\text{\sf G-ni}}$ for varying~$H$ in Fig.~\ref{fig:Gal-vs-Gni} for MsFEMs without oversampling and MsFEMs with \DOF-continuous oversampling. Without oversampling (OS), the approximation~$u_H^{\varepsilon,\text{\sf G-ni}}$ equals~$u_H^{\varepsilon,\mathsf{PG}}$ due to Lemma~\ref{lem:IBP-bubbles-gen}. We also report the error committed by the G-MsFEM. We observe that, without oversampling, the difference $u_H^{\varepsilon,\mathsf{G}} - u_H^{\varepsilon,\text{\sf G-ni}}$ is much smaller than this error. As a result, the errors obtained with the G-MsFEM and its non-intrusive approximation are of the same size. Indeed, the error of the non-intrusive G-MsFEM-lin deviates from the error of the G-MsFEM-lin by at most 0.05\% for all tests that we report here. For the MsFEM-CR, this is at most 1.2\%. In both cases, the two MsFEM variants thus have practically the same accuracy. This is in agreement with the theoretical result of Lemma~\ref{lem:estim-Gal-PG-H}.

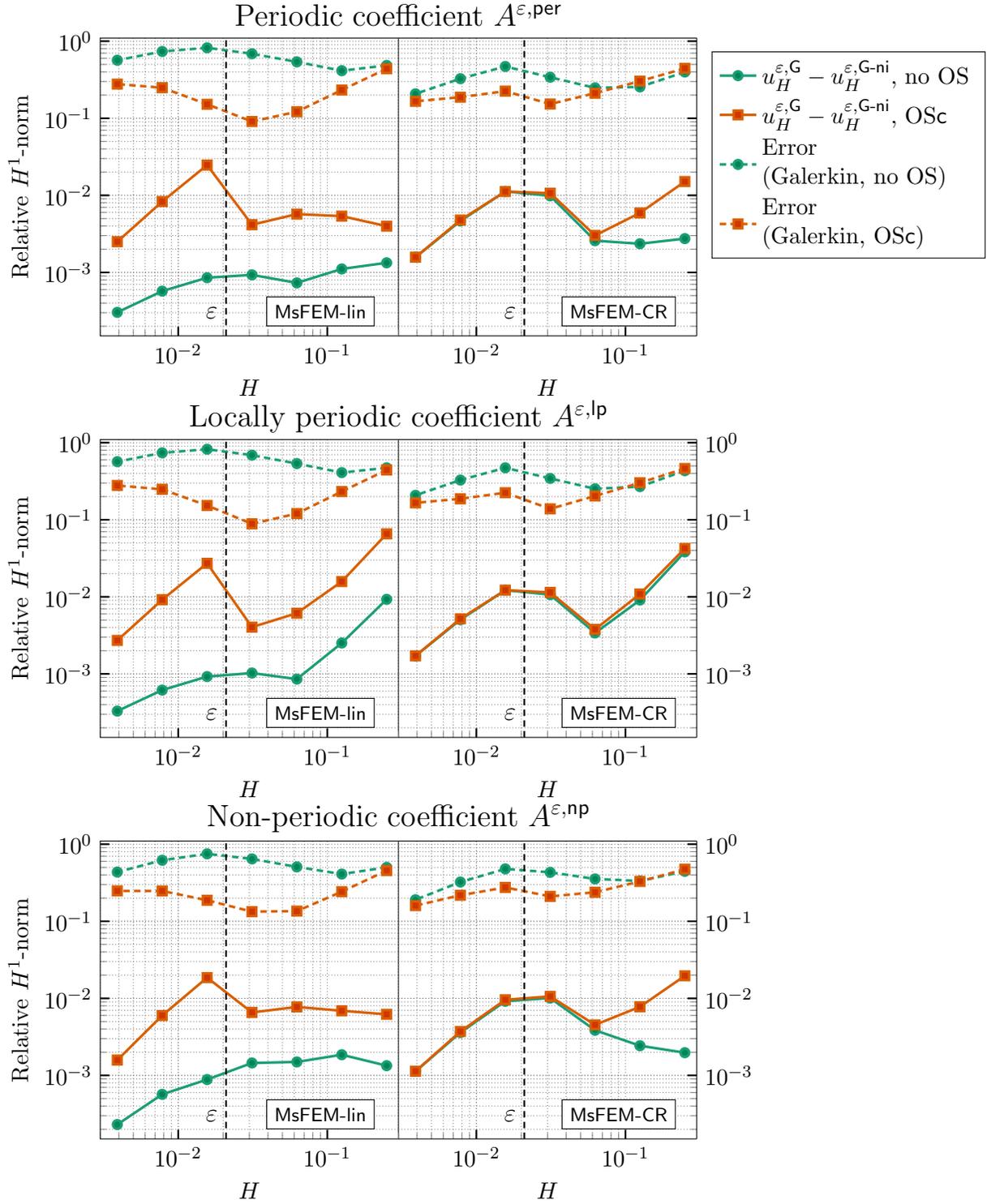
\begin{figure}
	\centering

\begin{tikzpicture}

\tikzmath{
    \Hmax = 0.3; 
    \Hmin = 0.003; 
    \emax = 1.1; 
    \emin = 0.00015; 
    \eps = 0.020947; 
    \epsheight = 0.0002; 
    \Htype = 0.09; 
	\refnorm = 0.0116239; 
	\refnormVar = 0.007729; 
	\refnormNP = 0.0173429;  
}

  \begin{groupplot}[
    group style={
      group name=my plots,
      group size=2 by 3,
      ylabels at=edge left,
      xlabels at=edge bottom,
      horizontal sep=0cm,
	  vertical sep=1.7cm,
    },width= 0.39\textwidth,
     height = 0.39\textwidth,
     every axis plot/.append style={very thick},
	 legend style={
		cells={align=left},
		nodes={text width=.21\textwidth,text depth=}
		},
	 major tick style={thick, black} 
    ]


	\nextgroupplot[ 
	    xmax = \Hmax, xmin = \Hmin,
	    ymin = \emin, ymax = \emax,
	    xmode = log,
	    ymode = log,
	    xlabel = {$H$},
	    ylabel = Relative $H^1$-norm,
    ]
    
	    \addplot[Dark2-A, mark options={solid, fill=Dark2-A!125}, mark = *] table [x = {H}, y expr = \thisrow{Lin-Gal-Galni}/\refnorm] {\MsFEMcompare};
		\addplot[Dark2-B, mark options={solid, fill=Dark2-B!125}, mark = square*] table [x = {H}, y expr = \thisrow{LinOSc3-Gal-Galni}/\refnorm] {\MsFEMcompare};
		\addplot[Dark2-A, densely dashed, mark options={solid, fill=Dark2-A!125}, mark = *] table [x = {H}, y = {Lin-Gal}] {\MsFEMtestsRel};
		\addplot[Dark2-B, densely dashed, mark options={solid, fill=Dark2-B!125}, mark = square*] table [x = {H}, y = {LinOSc3-Gal}] {\MsFEMtestsRel};
	    \draw [thick, black, densely dashed] ({axis cs:\eps,0}|-{rel axis cs:0,0}) -- ({axis cs:\eps,0}|-{rel axis cs:0,1});
	    \node[above left,black] at (axis cs: \eps,\epsheight) {\large $\varepsilon$};
	    \node[above,black,fill=white,draw=black] at (axis cs: \Htype,\epsheight) {\sf \footnotesize MsFEM-lin};

    \nextgroupplot[
	    xmax = \Hmax, xmin = \Hmin,
	    ymin = \emin, ymax = \emax,
	    xmode = log,
	    ymode = log,
	    xlabel = {$H$},
	    axis y line*=right, 
		yticklabels={,,},
	    legend style={at={(1.05,0.6075)},anchor=west}
	    ]
	    
	    \addplot[Dark2-A, mark options={solid, fill=Dark2-A!125}, mark = *] table [x = {H}, y expr = \thisrow{CR-Gal-Galni}/\refnorm] {\MsFEMcompare};
		\addplot[Dark2-B, mark options={solid, fill=Dark2-B!125}, mark = square*] table [x = {H}, y expr = \thisrow{CROSc3-Gal-Galni}/\refnorm] {\MsFEMcompare};
		\addplot[Dark2-A, densely dashed, mark options={solid, fill=Dark2-A!125}, mark = *] table [x = {H}, y = {CR-Gal}] {\MsFEMtestsRel};
		\addplot[Dark2-B, densely dashed, mark options={solid, fill=Dark2-B!125}, mark = square*] table [x = {H}, y = {CROSc3-Gal}] {\MsFEMtestsRel};
	    \draw [thick, black, densely dashed] ({axis cs:\eps,0}|-{rel axis cs:0,0}) -- ({axis cs:\eps,0}|-{rel axis cs:0,1});
	    \node[above left,black] at (axis cs: \eps,\epsheight) {\large $\varepsilon$};
	    \node[above,black,fill=white,draw=black] at (axis cs: \Htype,\epsheight) {\sf \footnotesize MsFEM-CR};
	    
	    \legend{
	        {$u^{\varepsilon,\mathsf{G}}_H - u^{\varepsilon,\mathsf{G\text{-}ni}}_H$, no OS},
			{$u^{\varepsilon,\mathsf{G}}_H - u^{\varepsilon,\mathsf{G\text{-}ni}}_H$,  OS\textsf{c}},
			{Error\\ (Galerkin, no OS)},
			{Error\\ (Galerkin, OS\textsf{c})}
	    }


    \nextgroupplot[ 
	    xmax = \Hmax, xmin = \Hmin,
	    ymin = \emin, ymax = \emax,
	    xmode = log,
	    ymode = log,
	    xlabel = {$H$},
	    ylabel = Relative $H^1$-norm,
    ]
    
	    \addplot[Dark2-A, mark options={solid, fill=Dark2-A!125}, mark = *] table [x = {H}, y expr = \thisrow{Lin-Gal-Galni}/\refnormVar] {\MsFEMcompareVar};
		\addplot[Dark2-B, mark options={solid, fill=Dark2-B!125}, mark = square*] table [x = {H}, y expr = \thisrow{LinOSc3-Gal-Galni}/\refnormVar] {\MsFEMcompareVar};
		\addplot[Dark2-A, densely dashed, mark options={solid, fill=Dark2-A!125}, mark = *] table [x = {H}, y = {Lin-Gal}] {\MsFEMtestsVarRel};
		\addplot[Dark2-B, densely dashed, mark options={solid, fill=Dark2-B!125}, mark = square*] table [x = {H}, y = {LinOSc3-Gal}] {\MsFEMtestsVarRel};
	    \draw [thick, black, densely dashed] ({axis cs:\eps,0}|-{rel axis cs:0,0}) -- ({axis cs:\eps,0}|-{rel axis cs:0,1});
	    \node[above left,black] at (axis cs: \eps,\epsheight) {\large $\varepsilon$};
	    \node[above,black,fill=white,draw=black] at (axis cs: \Htype,\epsheight) {\sf \footnotesize MsFEM-lin};

    \nextgroupplot[
	    xmax = \Hmax, xmin = \Hmin,
	    ymin = \emin, ymax = \emax,
	    xmode = log,
	    ymode = log,
	    xlabel = {$H$},
	    axis y line*=right, 
	    ]
	    
	    \addplot[Dark2-A, mark options={solid, fill=Dark2-A!125}, mark = *] table [x = {H}, y expr = \thisrow{CR-Gal-Galni}/\refnormVar] {\MsFEMcompareVar};
		\addplot[Dark2-B, mark options={solid, fill=Dark2-B!125}, mark = square*] table [x = {H}, y expr = \thisrow{CROSc3-Gal-Galni}/\refnormVar] {\MsFEMcompareVar};
		\addplot[Dark2-A, densely dashed, mark options={solid, fill=Dark2-A!125}, mark = *] table [x = {H}, y = {CR-Gal}] {\MsFEMtestsVarRel};
		\addplot[Dark2-B, densely dashed, mark options={solid, fill=Dark2-B!125}, mark = square*] table [x = {H}, y = {CROSc3-Gal}] {\MsFEMtestsVarRel};
	    \draw [thick, black, densely dashed] ({axis cs:\eps,0}|-{rel axis cs:0,0}) -- ({axis cs:\eps,0}|-{rel axis cs:0,1});
	    \node[above left,black] at (axis cs: \eps,\epsheight) {\large $\varepsilon$};
	    \node[above,black,fill=white,draw=black] at (axis cs: \Htype,\epsheight) {\sf \footnotesize MsFEM-CR};
	

    \nextgroupplot[ 
	    xmax = \Hmax, xmin = \Hmin,
	    ymin = \emin, ymax = \emax,
	    xmode = log,
	    ymode = log,
	    xlabel = {$H$},
	    ylabel = Relative $H^1$-norm,
    ]
    
	    \addplot[Dark2-A, mark options={solid, fill=Dark2-A!125}, mark = *] table [x = {H}, y expr = \thisrow{Lin-Gal-Galni}/\refnormNP] {\MsFEMcompareNP};
		\addplot[Dark2-B, mark options={solid, fill=Dark2-B!125}, mark = square*] table [x = {H}, y expr = \thisrow{LinOSc3-Gal-Galni}/\refnormNP] {\MsFEMcompareNP};
		\addplot[Dark2-A, densely dashed, mark options={solid, fill=Dark2-A!125}, mark = *] table [x = {H}, y = {Lin-Gal}] {\MsFEMtestsNPRel};
		\addplot[Dark2-B, densely dashed, mark options={solid, fill=Dark2-B!125}, mark = square*] table [x = {H}, y = {LinOSc3-Gal}] {\MsFEMtestsNPRel};
	    \draw [thick, black, densely dashed] ({axis cs:\eps,0}|-{rel axis cs:0,0}) -- ({axis cs:\eps,0}|-{rel axis cs:0,1});
	    \node[above left,black] at (axis cs: \eps,\epsheight) {\large $\varepsilon$};
	    \node[above,black,fill=white,draw=black] at (axis cs: \Htype,\epsheight) {\sf \footnotesize MsFEM-lin};

    \nextgroupplot[
	    xmax = \Hmax, xmin = \Hmin,
	    ymin = \emin, ymax = \emax,
	    xmode = log,
	    ymode = log,
	    xlabel = {$H$},
	    axis y line*=right, 
	    ]
	    
	    \addplot[Dark2-A, mark options={solid, fill=Dark2-A!125}, mark = *] table [x = {H}, y expr = \thisrow{CR-Gal-Galni}/\refnormNP] {\MsFEMcompareNP};
		\addplot[Dark2-B, mark options={solid, fill=Dark2-B!125}, mark = square*] table [x = {H}, y expr = \thisrow{CROSc3-Gal-Galni}/\refnormNP] {\MsFEMcompareNP};
		\addplot[Dark2-A, densely dashed, mark options={solid, fill=Dark2-A!125}, mark = *] table [x = {H}, y = {CR-Gal}] {\MsFEMtestsNPRel};
		\addplot[Dark2-B, densely dashed, mark options={solid, fill=Dark2-B!125}, mark = square*] table [x = {H}, y = {CROSc3-Gal}] {\MsFEMtestsNPRel};
	    \draw [thick, black, densely dashed] ({axis cs:\eps,0}|-{rel axis cs:0,0}) -- ({axis cs:\eps,0}|-{rel axis cs:0,1});
	    \node[above left,black] at (axis cs: \eps,\epsheight) {\large $\varepsilon$};
	    \node[above,black,fill=white,draw=black] at (axis cs: \Htype,\epsheight) {\sf \footnotesize MsFEM-CR};
	    
    \end{groupplot} 

	\node (per) at ($(my plots c1r1.center)!0.5!(my plots c2r1.center)+(0,.17\textwidth)$) {\Large Periodic coefficient~$A^{\varepsilon,\mathsf{per}}$};
	\node (nonper) at ($(my plots c1r2.center)!0.5!(my plots c2r2.center)+(0,.17\textwidth)$) {\Large Locally periodic coefficient~$A^{\varepsilon,\mathsf{lp}}$};
	\node (nonper) at ($(my plots c1r3.center)!0.5!(my plots c2r3.center)+(0,.17\textwidth)$) {\Large Non-periodic coefficient~$A^{\varepsilon,\mathsf{np}}$};

\end{tikzpicture}
    \caption{
		Solid lines: difference between the Galerkin MsFEM approximation ($u^{\varepsilon,\mathsf{G}}_H$ defined by~\eqref{eq:gen-MsFEM-OS}) and the non-intrusive Galerkin MsFEM approximation ($u^{\varepsilon,\mathsf{G\text{-}ni}}_H$ defined by~\eqref{eq:gen-MsFEM-noni}), without oversampling (no OS) and with \DOF-continuous oversampling (OS\textsf{c}), for the diffusion coefficients in~\eqref{eq:diffusion-test} as the mesh size~$H$ varies.
		Dashed lines: error of the Galerkin MsFEM with respect to the reference solution. All values are normalized with respect to the $H^1$ norm of the reference solution.
		}
    \label{fig:Gal-vs-Gni}
\end{figure} 

The estimates obtained in Sec.~\ref{sec:compare-gal-pg} do not apply to MsFEMs with oversampling. From Fig.~\ref{fig:Gal-vs-Gni}, we can see that the difference $u_H^{\varepsilon,\mathsf{G}} - u_H^{\varepsilon,\text{\sf G-ni}}$ is still small with respect to the error committed by the G-MsFEM when \DOF-continuous oversampling is applied. The approximation errors for the non-intrusive G-MsFEMs with \DOF-continuous oversampling differ by at most 1.3\% from the error of the G-MsFEM. Similar conclusions hold for the MsFEM-lin with \DOF-extended oversampling. The difference between the G-MsFEM and the non-intrusive G-MsFEM is larger for the MsFEM-CR with \DOF-extended oversampling. We do not include these results in the comparison of Fig.~\ref{fig:Gal-vs-Gni} because both methods perform particularly badly when compared to the G-MsFEM without oversampling.  

Let us also point out the qualitative and quantitative similarities between the performance of the MsFEM for the periodic and the non-periodic diffusion coefficients. Although the study of the homogenized limit of~$u^\varepsilon$ becomes increasingly difficult for the various coefficients~\eqref{eq:diffusion-test-per} to~\eqref{eq:diffusion-test-nonper}, the non-intrusive approximation does not deteriorate the accuracy of the MsFEM in these numerical tests.

Before moving on to a comparison with the Petrov-Galerkin MsFEMs with oversampling, let us discuss a phenomenon in Fig.~\ref{fig:Gal-vs-Gni} and~\ref{fig:MsFEM-tests} known as the `resonance effect' in the literature, preventing convergence of the MsFEM if the coarse scale~$H$ is close to~$\varepsilon$. Upon further decreasing~$H$, convergence is found only when~$H$ is sufficiently small with respect to the microscale~$\varepsilon$, in which case we are in the regime of classical FEMs. From a theoretical point of view, this is explained by the term~$\sqrt{\varepsilon/H}$ in the error estimate~\eqref{eq:msfem-error-estimate} (or~$\varepsilon/H$ for the MsFEM-lin with oversampling; see~\cite{efendiev_convergence_2000}). We note that the same error estimate was obtained in~\cite{le_bris_msfem_2013} for the MsFEM-CR (without oversampling). Figure~\ref{fig:MsFEM-tests} shows that the resonance effect is more pronounced for the MsFEM-lin with oversampling than for the MsFEM-CR with oversampling. 

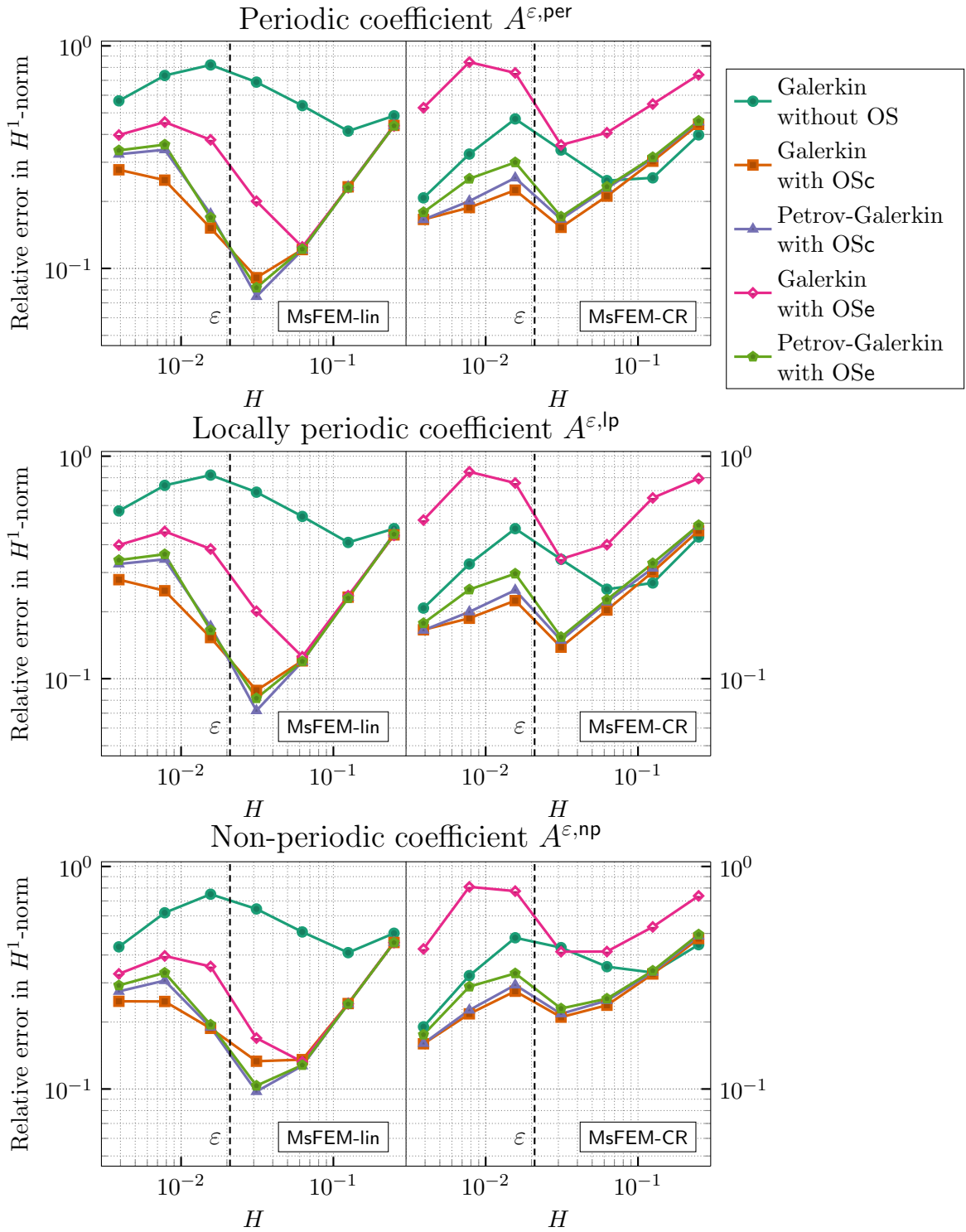
\begin{figure}
	\centering

\begin{tikzpicture}

\tikzmath{
    \Hmax = 0.3; 
    \Hmin = 0.003; 
    \emax = 1.05; 
    \emin = 0.045; 
    \eps = 0.020947; 
    \epsheight = 0.052; 
    \Htype = 0.1; 
}

  \begin{groupplot}[
    group style={
      group name=my plots,
      group size=2 by 3,
      ylabels at=edge left,
      xlabels at=edge bottom,
      horizontal sep=0cm,
	  vertical sep=1.7cm,
    },width= 0.39\textwidth,
     height = 0.39\textwidth,
     every axis plot/.append style={very thick},
	 legend style={
		cells={align=left},
		nodes={text width=.17\textwidth,text depth=}
		}, 
	 major tick style={thick, black} 
    ]


	\nextgroupplot[ 
	    xmax = \Hmax, xmin = \Hmin,
	    ymin = \emin, ymax = \emax,
	    xmode = log,
	    ymode = log,
	    xlabel = {$H$},
	    ylabel = Relative error in $H^1$-norm,
    ]
    
	    \addplot table [x = {H}, y = {Lin-Gal}] {\MsFEMtestsRel};
	    \addplot table [x = {H}, y = {LinOSc3-Gal}] {\MsFEMtestsRel};
	    \addplot table [x = {H}, y = {LinOSc3-PG}] {\MsFEMtestsRel};
		\addplot table [x = {H}, y = {LinOSr3-Gal}] {\MsFEMtestsRel};
		\addplot table [x = {H}, y = {LinOSr3-PG}] {\MsFEMtestsRel};
	    \draw [thick, black, densely dashed] ({axis cs:\eps,0}|-{rel axis cs:0,0}) -- ({axis cs:\eps,0}|-{rel axis cs:0,1});
	    \node[above left,black] at (axis cs: \eps,\epsheight) {\large $\varepsilon$};
	    \node[above,black,fill=white,draw=black] at (axis cs: \Htype,\epsheight) {\sf \footnotesize MsFEM-lin};

    \nextgroupplot[
	    xmax = \Hmax, xmin = \Hmin,
	    ymin = \emin, ymax = \emax,
	    xmode = log,
	    ymode = log,
	    xlabel = {$H$},
	    axis y line*=right, 
	    yticklabels={,,},
		legend style={
			at={(1.05,0.3835)},
			anchor=west,
			row sep=4pt
		}
	    ]
	    
	    \addplot table [x = {H}, y = {CR-Gal}] {\MsFEMtestsRel};
	    \addplot table [x = {H}, y = {CROSc3-Gal}] {\MsFEMtestsRel};
	    \addplot table [x = {H}, y = {CROSc3-PG}] {\MsFEMtestsRel};
		\addplot table [x = {H}, y = {CROSr3-Gal}] {\MsFEMtestsRel};
		\addplot table [x = {H}, y = {CROSr3-PG}] {\MsFEMtestsRel};
	    \draw [thick, black, densely dashed] ({axis cs:\eps,0}|-{rel axis cs:0,0}) -- ({axis cs:\eps,0}|-{rel axis cs:0,1});
	    \node[above left,black] at (axis cs: \eps,\epsheight) {\large $\varepsilon$};
	    \node[above,black,fill=white,draw=black] at (axis cs: \Htype,\epsheight) {\sf \footnotesize MsFEM-CR};
	    
	    \legend{
	        Galerkin\\ without OS,
	        Galerkin\\ with OS\textsf{c},
	        Petrov-Galerkin\\ with OS\textsf{c},
			Galerkin\\ with OS\textsf{e},
			Petrov-Galerkin\\ with OS\textsf{e}
	    }

	
	\nextgroupplot[ 
	    xmax = \Hmax, xmin = \Hmin,
	    ymin = \emin, ymax = \emax,
	    xmode = log,
	    ymode = log,
	    xlabel = {$H$},
	    ylabel = Relative error in $H^1$-norm,
    ]
    
	    \addplot table [x = {H}, y = {Lin-Gal}] {\MsFEMtestsVarRel};
	    \addplot table [x = {H}, y = {LinOSc3-Gal}] {\MsFEMtestsVarRel};
	    \addplot table [x = {H}, y = {LinOSc3-PG}] {\MsFEMtestsVarRel};
		\addplot table [x = {H}, y = {LinOSr3-Gal}] {\MsFEMtestsVarRel};
		\addplot table [x = {H}, y = {LinOSr3-PG}] {\MsFEMtestsVarRel};
	    \draw [thick, black, densely dashed] ({axis cs:\eps,0}|-{rel axis cs:0,0}) -- ({axis cs:\eps,0}|-{rel axis cs:0,1});
	    \node[above left,black] at (axis cs: \eps,\epsheight) {\large $\varepsilon$};
	    \node[above,black,fill=white,draw=black] at (axis cs: \Htype,\epsheight) {\sf \footnotesize MsFEM-lin};

    \nextgroupplot[
	    xmax = \Hmax, xmin = \Hmin,
	    ymin = \emin, ymax = \emax,
	    xmode = log,
	    ymode = log,
	    xlabel = {$H$},
	    axis y line*=right, 
	    legend style={at={(1.05,1.3)},anchor=west}
	    ]
	    
	    \addplot table [x = {H}, y = {CR-Gal}] {\MsFEMtestsVarRel};
	    \addplot table [x = {H}, y = {CROSc3-Gal}] {\MsFEMtestsVarRel};
	    \addplot table [x = {H}, y = {CROSc3-PG}] {\MsFEMtestsVarRel};
		\addplot table [x = {H}, y = {CROSr3-Gal}] {\MsFEMtestsVarRel};
		\addplot table [x = {H}, y = {CROSr3-PG}] {\MsFEMtestsVarRel};
	    \draw [thick, black, densely dashed] ({axis cs:\eps,0}|-{rel axis cs:0,0}) -- ({axis cs:\eps,0}|-{rel axis cs:0,1});
	    \node[above left,black] at (axis cs: \eps,\epsheight) {\large $\varepsilon$};
	    \node[above,black,fill=white,draw=black] at (axis cs: \Htype,\epsheight) {\sf \footnotesize MsFEM-CR};

	
	\nextgroupplot[ 
	    xmax = \Hmax, xmin = \Hmin,
	    ymin = \emin, ymax = \emax,
	    xmode = log,
	    ymode = log,
	    xlabel = {$H$},
	    ylabel = Relative error in $H^1$-norm,
    ]
    
	    \addplot table [x = {H}, y = {Lin-Gal}] {\MsFEMtestsNPRel};
	    \addplot table [x = {H}, y = {LinOSc3-Gal}] {\MsFEMtestsNPRel};
	    \addplot table [x = {H}, y = {LinOSc3-PG}] {\MsFEMtestsNPRel};
		\addplot table [x = {H}, y = {LinOSr3-Gal}] {\MsFEMtestsNPRel};
		\addplot table [x = {H}, y = {LinOSr3-PG}] {\MsFEMtestsNPRel};
	    \draw [thick, black, densely dashed] ({axis cs:\eps,0}|-{rel axis cs:0,0}) -- ({axis cs:\eps,0}|-{rel axis cs:0,1});
	    \node[above left,black] at (axis cs: \eps,\epsheight) {\large $\varepsilon$};
	    \node[above,black,fill=white,draw=black] at (axis cs: \Htype,\epsheight) {\sf \footnotesize MsFEM-lin};

    \nextgroupplot[
	    xmax = \Hmax, xmin = \Hmin,
	    ymin = \emin, ymax = \emax,
	    xmode = log,
	    ymode = log,
	    xlabel = {$H$},
	    axis y line*=right, 
	    legend style={at={(1.05,1.3)},anchor=west}
	    ]
	    
	    \addplot table [x = {H}, y = {CR-Gal}] {\MsFEMtestsNPRel};
	    \addplot table [x = {H}, y = {CROSc3-Gal}] {\MsFEMtestsNPRel};
	    \addplot table [x = {H}, y = {CROSc3-PG}] {\MsFEMtestsNPRel};
		\addplot table [x = {H}, y = {CROSr3-Gal}] {\MsFEMtestsNPRel};
		\addplot table [x = {H}, y = {CROSr3-PG}] {\MsFEMtestsNPRel};
	    \draw [thick, black, densely dashed] ({axis cs:\eps,0}|-{rel axis cs:0,0}) -- ({axis cs:\eps,0}|-{rel axis cs:0,1});
	    \node[above left,black] at (axis cs: \eps,\epsheight) {\large $\varepsilon$};
	    \node[above,black,fill=white,draw=black] at (axis cs: \Htype,\epsheight) {\sf \footnotesize MsFEM-CR};
	    
    \end{groupplot} 

	\node (per) at ($(my plots c1r1.center)!0.5!(my plots c2r1.center)+(0,.17\textwidth)$) {\Large Periodic coefficient~$A^{\varepsilon,\mathsf{per}}$};
	\node (locper) at ($(my plots c1r2.center)!0.5!(my plots c2r2.center)+(0,.17\textwidth)$) {\Large Locally periodic coefficient~$A^{\varepsilon,\mathsf{lp}}$};
	\node (nonper) at ($(my plots c1r3.center)!0.5!(my plots c2r3.center)+(0,.17\textwidth)$) {\Large Non-periodic coefficient~$A^{\varepsilon,\mathsf{np}}$};

\end{tikzpicture}
	\caption{
		Comparison of the errors of the (intrusive) Galerkin MsFEM~\eqref{eq:gen-MsFEM-OS} and the (non-intrusive) Petrov-Galerkin MsFEM~\eqref{eq:gen-MsFEM-OS-testP1} for the diffusion coefficients in~\eqref{eq:diffusion-test} as the mesh size~$H$ varies. Different oversampling strategies are applied: \DOF-continuous (OS\textsf{c}, Def.~\ref{def:corr-dofc}) and \DOF-extended (OS\textsf{e}, Def.~\ref{def:corr-dofe}). The Galerkin MsFEM without OS is included to illustrate the effect of the OS strategies.
	}
	\label{fig:MsFEM-tests}
\end{figure}

We consider next in Fig.~\ref{fig:MsFEM-tests} MsFEMs with the two different oversampling strategies of Sec.~\ref{sec:gen-framework}: \DOF-continuous and \DOF-extended oversampling. The PG-MsFEM variant, with or without oversampling, is completely equivalent to its non-intrusive implementation by virtue of Lemma~\ref{lem:gen-MsFEM-PG-eff}. With oversampling, however, it does not coincide with the (intrusive or non-intrusive) G-MsFEM. 

With oversampling, the matrices of the linear systems for the G-MsFEM and PG-MsFEM are different; Lemma~\ref{lem:IBP-bubbles-gen} does not apply. The result is that the differences $u_H^{\varepsilon,\mathsf{G}} - u_H^{\varepsilon,\mathsf{PG}}$ are larger than the differences $u_H^{\varepsilon,\mathsf{G}} - u_H^{\varepsilon,\text{\sf G-ni}}$. This is reflected in the numerical errors of the methods. We show the errors of the PG-MsFEM and the G-MsFEM with respect to the reference solution~$u^\varepsilon_h$ in Fig.~\ref{fig:MsFEM-tests}. (The non-intrusive G-MsFEM is too close to the G-MsFEM to be distinguishable on the scale of Fig.~\ref{fig:MsFEM-tests} for all MsFEMs except the MsFEM with \DOF-extended oversampling.) The G-MsFEM without oversampling is also shown to highlight the effect of oversampling. 

Let us first consider the two different oversampling strategies. For all Galerkin MsFEMs, it is clear that the \DOF-continuous variant performs (much) better than the \DOF-extended variant. For the Petrov-Galerkin MsFEMs, the difference between the two oversampling strategies is smaller, but the \DOF-continuous version of oversampling continues to perform better over all. 

Although clear differences in the performance of the Galerkin and Petrov-Galerkin MsFEMs with \DOF-continuous oversampling can be observed, these differences are small and both MsFEM approaches have a comparable accuracy. There is no systematic disadvantage in choosing the non-intrusive PG-MsFEM over the (intrusive or non-intrusive) G-MsFEM.
Moreover, the non-periodic test cases again show the robustness of all MsFEM variants when going beyond the setting of periodic homogenization. In particular, this demonstrates the robustness of the non-intrusive approaches for the MsFEM developed in this article.

\section*{Acknowledgments}
The first author acknowledges the support of DIM Math INNOV. The work of the second and third authors is partially supported by ONR under grant N00014-20-1-2691 and by EOARD under grant FA8655-20-1-7043. These two authors acknowledge the continuous support from these two agencies. The fourth author thanks Inria for the financial support enabling his two-year partial leave (2020-2022) that has significantly facilitated the
collaboration on this project.

\printbibliography

\end{document}